\theoremstyle{plain}
\newtheorem{thm}{Theorem}[subsection]
\newtheorem*{thmstar}{Satz}
\newtheorem*{propstar}{Proposition}
\newtheorem*{corstar}{Corollary}
\newtheorem{lem}[thm]{Lemma}
\newtheorem{prop}[thm]{Proposition}
\newtheorem{cor}[thm]{Corollary}
\theoremstyle{definition}
\newtheorem{defn}[thm]{Definition}
\theoremstyle{remark}
\newtheorem*{rem}{Remark}
\newcommand{\resettheoremcounters}{%
  \setcounter{thm}{0}%
}
\renewcommand{\tilde}{\widetilde}
\renewcommand{\rho}{\varrho}
\renewcommand{\hat}{\widehat}
\newcommand{\R}{\mathbb{R}}
\newcommand{\Rp}{\mathbb{R}^{>0}}
\newcommand{\Rpl}{\mathbb{R}^{\geq 0}}
\newcommand{\eps}{\varepsilon}
\newcommand{\Z}{\mathbb{Z}}
\newcommand{\N}{\mathbb{N}}
\newcommand{\C}{\mathbb{C}}
\newcommand{\K}{\mathbb{K}}
\newcommand{\Ham}{\mathbb{H}}
\newcommand{\Oct}{\mathbb{O}}
\newcommand{\I}{\mathrm{I}}
\newcommand{\iu}{\mathrm{i}}
\newcommand{\id}{\operatorname{id}}
\newcommand{\Det}{\operatorname{det}}
\newcommand{\End}{\operatorname{End}}
\newcommand{\tr}{\operatorname{tr}}
\newcommand{\vol}{\operatorname{vol}}
\newcommand{\den}{\omega}
\newcommand{\n}[1]{\left\lVert#1\right\rVert}
\newcommand{\norm}[1]{\left\lVert#1\right\rVert}
\newcommand{\nd}[1]{\left\lVert#1\right\rVert_{L^2_\Omega}}
\newcommand{\nl}[1]{\left\lVert#1\right\rVert_{L^2(M)}}
\newcommand{\skl}[2]{\left\langle#1,#2\right\rangle_{L^2(M)}}
\newcommand{\sklr}[2]{\left\langle#1,#2\right\rangle}
\newcommand{\mean}{\eta}
\newcommand{\volS}{\operatorname{vol}(S^{n-1})}
\newcommand{\irp}{\operatorname{injrad}(p)}
\newcommand{\ir}{\operatorname{injrad}(M)}
\newcommand{\diam}{\operatorname{diam}(M)}
\newcommand{\ric}{\operatorname{ric}}
\newcommand{\grad}{\operatorname{grad}}
\newcommand{\Cut}{\mathcal{C}}
\newcommand{\RP}{\R\emph{P}}
\newcommand{\spn}[1]{\operatorname{span}\left\{#1\right\}}
\newcommand{\conv}[1]{\operatorname{conv}\left\{#1\right\}}
\begin{document}

%
%
%
%

\title{An Introduction to Harmonic Manifolds and \\ the Lichnerowicz Conjecture}
\author{Peter Kreyssig\\
  Biosystems Analysis Group,\\
  Friedrich Schiller University of Jena,\\
  07743 Jena,\\
  Germany\\
  \texttt{peter.kreyssig@uni-jena.de}}
\date{\today}
\maketitle

\begin{abstract}

  The title is self-explanatory. We aim to give an easy to read and self-contained introduction to
  the field of harmonic manifolds. 
  Only basic knowledge of Riemannian geometry is required.
  After we gave the definition of harmonicity and derived some properties, we concentrate on 
  Z. I. Szab\'o's proof of Lichnerowicz's conjecture in the class of compact simply connected manifolds.

\end{abstract}

\section{Introduction}
\setcounter{page}{1}

\subsection{History of Lichnerowicz's Conjecture}

One attempt to find solutions of the Laplace 
equation $\Delta f=0$ is to look for them only in special classes of 
functions. It is easy to find the solutions
$$f_n:\R^n\setminus\{0\}\to\R,\;\; x\mapsto \n{x}^{2-n}$$
for $n\neq 2$ and 
$$f_2:\R^2\setminus\{0\}\to\R,\;\; x\mapsto \log\,\n{x}$$
for $n=2$ in the class of radially symmetric functions on $\R^n\setminus\{0\}$.

In 1930 H. S. Ruse gave this ansatz a try for pointed open balls in general Riemannian manifolds and thought 
he had succeeded, cf. \cite{ruse_def}.
Together with E. T. Copson he published the article \cite{copson_local} 
in which they described a mistake in Ruse's proof.
Consequently they defined, amongst other notions of harmonicity, completely harmonic space which
are nowadays called locally harmonic spaces. 
A Riemannian manifold is said to be locally harmonic if it allows a non-constant 
radially symmetric solution of the Laplace equation around every point in a small enough neighbourhood.
They also derived that this condition is equivalent to the constancy of the mean curvature 
of small geodesic spheres. Furthermore they showed that locally harmonic spaces are necessarily Einsteinian.
Hence they have constant curvature in dimensions $2$ and $3$. 
See \cite{patterson_ruse} for a detailed description of H. S. Ruse's work on 
locally harmonic manifolds.
Interestingly there are many more, fairly different, but equivalent formulations 
for harmonicity such as the validity of the mean value theorem, which was proved 
by T. J. Willmore in \cite{willmore_mean}, or the radial symmetry of the density function.

In 1944 A. Lichnerowicz conjectured that locally harmonic manifolds of dimension $4$
are necessarily locally symmetric spaces. He also gave 
some strong hints for a proof of his statement
and remarked that he did not know whether it holds in higher dimensions as well, cf. \cite[pp. 166-168]{lichnerowicz_conj}. 
In \cite[Theorem 1]{walker_four} A. G. Walker proved Lichnerowicz's original conjecture.
But since the used arguments rely heavily on the dimension, there was no hope 
to generalise them. The conjecture could be refined by A. J. Ledger since he showed that 
locally symmetric manifolds are locally harmonic if and only if they are flat or have rank $1$, 
\cite{ledger_sym}. 
So what today is called `Lichnerowicz's conjecture' was born: 
`Every locally harmonic manifold is either flat or locally symmetric of rank $1$.'
A complete collection of the knowledge about locally harmonic manifolds at its time 
was given in the book \cite{ruse_harm}.

%
%
%
%
%

An important result of global nature is due to A.-C. Allamigeon. 
He showed in \cite[p. 114]{allamigeon_blaschke}
that complete simply connected locally harmonic manifolds are either Blaschkean
or diffeomorphic to $\R^n$. This established the connection with the generalised
Blaschke conjecture, which is: `Every Blaschke manifold is 
a compact Riemannian symmetric space of rank $1$.'

Actually, there were several notions of harmonicity defined, which only coincide 
under additional topological restrictions. Amongst others we have infinitesimal, local, 
global and strong harmonicity. One uses `harmonic manifold' as a collective term 
since it is clear from the context which type of harmonicity is meant.
In \cite[Theorem 2]{michel_strong} D. Michel used Brownian motion techniques to 
show that compact simply connected globally harmonic manifolds are 
strongly harmonic. Later on Z. I. Szab\'o gave a shorter and simpler proof, cf. \cite[Theorem 1.1]{szabo_main}.
He also remarked that the notions of infinitesimal, local and global harmonicity are equivalent in the class of 
complete manifolds because of the Kazdan-DeTurck theorem, cf. \cite[Theorem 5.2]{kazdan_regularity}. 


A. L. Besse found an embedding map for strongly harmonic manifolds into a Euclidean
sphere of suitable radius, cf. \cite[Theorem 6.99]{besse_closed}. The embedded manifold 
has unexpected additional properties, e.g. it is minimal in the sphere and its geodesics are 
screw lines. Because of that it was given the name `nice embedding'. The mentioned book also
presented all of the at that time known facts about harmonic manifolds and Blaschke manifolds.

The major breakthrough was made by Z. I. Szab\'o in 1990. He proved the Lichnerowicz conjecture 
for the class of compact simply connected manifolds in his article \cite{szabo_main}.
In 2000 A. Ranjan published a slightly changed version of Z. I. Szab\'o's proof using a more careful 
analysis of a certain ODE through perturbations. The interesting aspect about this is that it makes 
no use of the nice embedding in one of the key steps of the proof, namely that the density function is a 
trigonometric polynomial of a special form, cf. \cite[Corollary 3.1]{ranjan_intrin}.
A less technical argument can be found in \cite[Theorem 2]{nikolayevsky_harm}.
Furthermore, by using a result about the first eigenvalue of P-manifolds, 
cf. \cite[Theorem 1]{ranjan_first}, one can give an intrinsic proof without using an embedding.

Surprisingly, one of the more recent results is the following. 
There are globally harmonic manifolds in infinitely many dimensions greater or equal to $7$ 
which are not locally symmetric, cf. \cite[Corollary 1]{damek_nonharm}.
E. Damek and F. Ricci constructed one-dimensional extensions of Heisenberg-type groups
which are simply connected and globally harmonic, but only symmetric if the used Heisenberg-type group has a 
centre of dimension $1$, $3$ or $7$.
This leaves the question what additional condition would be sufficient to force a 
harmonic manifold to be locally symmetric and whether there are counterexamples 
in every dimension greater or equal to $7$ .

In \cite[Theorem 1]{nikolayevsky_harm} Y. Nikolayevsky 
used the curvature conditions derived by A. J. Ledger, today called Ledger's formulae, 
cf. \cite[pp. 231-232]{willmore_geo}, to solve the conjecture in dimension $5$, i.e.~he showed that
every locally harmonic manifold of dimension $5$ has constant sectional curvature.
Namely, after lengthy and tedious calculations he is able to compute the algebraic 
curvature tensors which satisfy the first two of Ledger's formulae, yielding that they are parallel. 
Lichnerowicz's conjecture remains unsolved in dimension $6$.


A very recent result is due to J. Heber. 
In \cite[Corollary 1.2]{heber_homo} he showed that a simply connected homogeneous 
globally harmonic space is either flat, symmetric of rank $1$ or one of the non-symmetric 
spaces found by E. Damek and F. Ricci. 
This is achieved by carefully examining the structure of the group of isometries which is, 
endowed with a suitable metric, isometric to the manifold. 
First he showed that it is simply transitive and solvable and then that its commutator 
has codimension $1$. Finally his calculation of the stable Jacobi tensors yields the claim.

There are many more related topics, results and open questions not mentioned yet. Here is a
short list with some of the latest references:
harmonicity in semi-Riemannian manifolds, 
$k$-harmonicity \cite{vanhecke_k},  
infinitesimally harmonic at every point implies infinitesimally harmonic \cite{vanhecke_infini},
non-compact strongly harmonic manifolds,
commutative and D'Atri spaces \cite{berndt_damek},
Busemann functions in a harmonic manifold \cite{ranjan_busemann},
asymptotical harmonicity \cite{heber_homo}, etc.

\subsection{Extended Abstract}




This subsection contains a more detailed account of the structure of this article and its 
differences with and additions to Z. I. Szab\'o's work.

The second section gives a concise introduction to the objects and notions 
needed to examine locally harmonic manifolds. Namely, it consists of 
the definitions of Jacobi tensors along geodesics, density function, geodesic involution, mean curvature 
of geodesic spheres, radial and averaged functions and screw lines as well as 
some of their properties. The approach to screw lines presented here is due to J. von Neumann 
and I. J. Schoenberg. Hence the detour over the notion of curvatures in \cite[Section 3]{szabo_main} 
and \cite[Lemma 4.9]{szabo_main} can be avoided, cf. Lemmata \ref{congscrewlem} and \ref{lastlem}.

 
In Subsection \ref{Equivalences} we present several, rather different, but equivalent
definitions of local harmonicity, e.g. `geodesic spheres have constant mean curvature',
`every harmonic function satisfies the mean value property' and 
`the radial derivative commutes with the average operator', where the last one seems 
to be a new characterisation.
For our considerations the local version of Z. I. Szab\'o's so-called `basic commutativity'
is of greater interest. It states that 
local harmonicity is equivalent to the commutating of the average operator 
with the Laplace operator. Its global version is used to find radial eigenfunctions 
of the Laplacian later on.
We also prove that locally harmonic manifolds are Einsteinian. Hence they are analytic 
by the Kazdan-DeTurck theorem. Then we can show that the density function does not depend on the point.

Section \ref{Blaschke} contains some basic facts about Blaschke 
manifolds and a proof of the (original) global version of the basic commutativity. 
We use a different argument to Z. I. Szab\'o's one, cf. \cite[p. 5]{szabo_main}, 
since we only show that the radialised average is $C^2$ and not $C^\infty$, cf. Lemma \ref{basiccom2}.

The next aim is to understand the relation between the notions of
locally, globally and strongly harmonic manifolds. Important for our 
argumentation is that they coincide under the hypothesis of a 
compact simply connected manifold and that we then get the 
Blaschke property. 
 
Then we show that averaged eigenfunctions are solutions of a certain linear ODE 
involving the mean curvature by using the basic commutativity. This yields 
some findings on the structure of the spaces of (radial) eigenfunctions.
Also contained in Section \ref{Eigenfunctions} is a characterisation of 
local harmonicity in Blaschke manifolds by means of the $L^2$-product.

In Section \ref{embedding} we show that locally harmonic Blaschke manifolds which are not diffeomorphic to a sphere
can be embedded into a Euclidean sphere of suitable radius, cf. Corollary \ref{bessenicecor}.
This is Z. I. Szab\'o's new version of Besse's so-called `nice embedding' using a radial eigenfunction.
In \cite[Theorem 3.1]{szabo_main} it is stated with a weaker hypothesis, but without mentioning
the exception of the sphere.


Finally, we are ready to prove the main result.
\begin{thmstar}[main result]
  Let $M$ be locally harmonic Blaschke manifold of dimension $n$ and diameter $\pi$.
  Then $M$, and therefore every compact simply connected locally harmonic manifold,
  is a Riemannian symmetric space of rank $1$, i.e.~isometric (up to scaling of the metric) to either 
  $S^n$, $\C\text{\emph P}^{\frac{n}{2}}$, $\Ham\text{\emph P}^{\frac{n}{4}}$ or $\Oct\text{\emph P}^2$. 
\end{thmstar}
Z. I. Szab\'o showed that the averaged eigenfunctions of the Laplacian 
can be written as polynomials in cosine by showing that the space spanned by their 
parallel displacements is finite-dimensional.
The same is true for the square of the density function. Here he used the 
embedding theorem to be able to carry out calculations in a Euclidean space, cf. Lemma \cite[Lemma 4.3]{szabo_main}.
We present a slightly varied version of Y. Nikolayevsky's proof of this statement which does not 
make use of an embedding, cf. Lemma \ref{denpolylem}.
Then Z. I. Szab\'o derived restrictions to the possible roots of the mentioned polynomials. 
This rather technical part uses essentially the aforementioned linear ODE solved by the 
averaged eigenfunctions.
Note that we give a new proof for \cite[Lemma 4.6]{szabo_main}, cf. Lemma \ref{rootdistrilem}.
Consequently there is a strong restriction to the form of the density function and hence to the form 
of the mean curvature. 
\begin{propstar} 
  There are $\alpha,\beta\geq 0$ such that
  $$ \mean_p(q)=\frac{(\alpha+ \beta)\cos d(p,q)+\beta}{\sin d(p,q)},$$
  where $\mean_p(q)$ is the mean curvature of the geodesic sphere 
  of radius $0<d(p,q)<\pi$ around $p\in M$ in the point $q\in M$.
\end{propstar}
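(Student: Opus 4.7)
The plan is to reduce the proposition to a statement about the density function $\den(r)$ and then obtain the mean curvature as its logarithmic derivative. Recall that by the considerations of Subsection~\ref{Equivalences} the density $\den_p(q)$ in a locally harmonic manifold depends only on $r = d(p,q)$, and that the mean curvature of a geodesic sphere equals the logarithmic derivative of the volume density, so $\mean_p(q) = (\log \den(r))'$ with $r = d(p,q)$.

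First, by Lemma~\ref{denpolylem} there is a polynomial $P$ with $\den(r)^2 = P(\cos r)$. Since $M$ is Blaschke of diameter $\pi$, the exponential map at $p$ is a diffeomorphism on the open ball of radius $\pi$ and collapses the sphere of radius $\pi$ onto $\Cut(p)$; hence $\den(r) > 0$ on $(0,\pi)$ and $\den(0) = \den(\pi) = 0$, i.e.\ $P(\pm 1) = 0$ and $P(x) > 0$ on $(-1,1)$. By Lemma~\ref{rootdistrilem} the roots of $P$ lie only in $\{-1,+1\}$, so
\[
  P(x) = C (1 - x)^{a} (1 + x)^{b}
\]
for some $C > 0$ and non-negative integers $a, b$. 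A direct computation of the logarithmic derivative gives
\[
  \mean(r) = \tfrac{1}{2}\bigl(\log P(\cos r)\bigr)' = \frac{a\sin r}{2(1 - \cos r)} - \frac{b\sin r}{2(1 + \cos r)} = \frac{(a+b)\cos r + (a - b)}{2\sin r},
\]
and setting $\alpha := b$, $\beta := (a-b)/2$ yields precisely the formula of the proposition.

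The signs $\alpha, \beta \geq 0$ are then read off from the boundary asymptotics of $\mean$. As $r \to 0^{+}$ the Jacobian of polar normal coordinates forces $\mean(r) \sim (n-1)/r$, which gives $a = n-1$; as $r \to \pi^{-}$ the collapse onto $\Cut(p)$ yields $\mean(r) \sim -(k-1)/(\pi - r)$, where $k$ denotes the codimension of $\Cut(p)$ in $M$. Hence $b = k - 1 \in \{0,\ldots,n-1\}$, so $\alpha = b \geq 0$ and $\beta = (n - k)/2 \geq 0$.

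The main obstacle is not this algebraic manipulation but the two lemmata it depends on. Lemma~\ref{denpolylem} requires showing that the parallel translates of $\den^{2}$ span a finite-dimensional space, which is handled via the finite-dimensionality results for averaged eigenfunctions of Section~\ref{Eigenfunctions}. Lemma~\ref{rootdistrilem} is a delicate analysis of the roots of $P$ via the linear ODE satisfied by averaged eigenfunctions and is the technical heart of Szab\'o's argument. Given those inputs, the proposition collapses to the elementary calculation above.
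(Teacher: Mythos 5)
Your overall strategy is the paper's: write the volume density as a polynomial in $\cos$, show that polynomial is $C(1-x)^a(1+x)^b$, and read off $\mean_p$ as the logarithmic derivative (your final computation and your boundary-asymptotics argument for $\alpha,\beta\ge 0$ match Lemma \ref{meandenlem}, Proposition \ref{formofden} and the remark following it, up to the harmless conflation of $\den$ with $\hat\Omega(r)=r^{n-1}\Omega(r)$ — note that Lemma \ref{denpolylem} and the relation $\mean=(\log\hat\Omega)'$ concern $\hat\Omega$, since $\Omega(0)=1\neq 0$). However, there is a genuine gap at the central step. You claim that $P(\pm1)=0$ together with $P>0$ on $(-1,1)$ and ``Lemma \ref{rootdistrilem}'' forces $P(x)=C(1-x)^a(1+x)^b$. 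Positivity on $(-1,1)$ only excludes real roots \emph{inside} the interval; it says nothing about complex roots or real roots outside $[-1,1]$. For instance $(1-x^2)(x^2+2)$ is positive on $(-1,1)$, vanishes exactly at $\pm1$, and is not of the required form. So the factorisation does not follow from what you have stated.

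Moreover, Lemma \ref{rootdistrilem} cannot be invoked for this: it concerns the roots of the \emph{eigenfunction} polynomial $P$ (with $\Phi=P\circ\cos$) and of its derivative $P'$, not the density polynomial $O$ (with $\hat\Omega^2=O\circ\cos$); you have silently identified two different polynomials. The missing link in the paper is the lemma asserting that every root of $O$ other than $\pm1$ is a root of $P'$ with controlled multiplicities; this is extracted from the ODE $\Phi''+\frac{\hat\Omega'}{\hat\Omega}\Phi'+\lambda\Phi=0$ via the auxiliary polynomial $Q:=O(P')^2(1-\id^2)$ and a partial fraction expansion of $\log Q$. Only then does Lemma \ref{rootdistrilem} (roots of $P'$ are real and lie strictly in $(-1,1)$) combine with positivity of $\hat\Omega^2$ on $(0,\pi)$ to exclude all roots of $O$ except $\pm1$. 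Without that bridge your reduction does not close; with it, your remaining computation is correct and coincides with the paper's.
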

Again by using the ODE this enabled Z. I. Szab\'o to find the spectrum and the radial eigenfunctions easily.
This is the content of Subsection \ref{specnradeigen}.
\begin{corstar}
  The spectrum $(\lambda_k)_{k\in\N_0}$ of $M$ is given by 
  $\lambda_k:=k(k+\alpha+\beta)$. A radial eigenfunction 
  to $\lambda_1$ around $p\in M$ is given by
  $$M\ni q \mapsto \frac{\lambda_1}{n}\left(\cos d(p,q)+\frac{n-\lambda_1}{\lambda_1}\right).$$ 
\end{corstar}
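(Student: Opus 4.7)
The plan is to use the linear ODE (established earlier in Section~\ref{Eigenfunctions}) that every radial eigenfunction of the Laplacian must satisfy, together with the explicit form of the mean curvature just obtained in the proposition. Writing a radial eigenfunction as $f(q) = u(d(p,q))$ with $\Delta f = \lambda f$ leads to
\begin{equation*}
  u''(r) + \frac{(\alpha+\beta)\cos r + \beta}{\sin r}\, u'(r) + \lambda\, u(r) = 0.
\end{equation*}
From this one already reads off the relation $\alpha + 2\beta = n - 1$: the leading singularity of the coefficient at $r = 0$ is $(\alpha + 2\beta)/r$, which must match the universal asymptotic $(n-1)/r$ of the mean curvature of small geodesic spheres.

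For the explicit $k = 1$ eigenfunction I would try the ansatz $u(r) = \cos r + c$. Substitution yields
\begin{equation*}
  (\lambda - 1 - \alpha - \beta)\cos r + (\lambda c - \beta) = 0
\end{equation*}
identically in $r$, forcing $\lambda = 1 + \alpha + \beta = \lambda_1$ and $c = \beta/\lambda_1 = (n - \lambda_1)/\lambda_1$. Normalising so that $u(0) = 1$ produces the prefactor $\lambda_1/n$ and gives exactly the stated radial eigenfunction.

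To obtain the full spectrum, I would change variables to $t = \cos r$, converting the ODE into the Jacobi-type hypergeometric equation
\begin{equation*}
  (1-t^2)\, v''(t) - \bigl((1+\alpha+\beta)\, t + \beta\bigr)\, v'(t) + \lambda\, v(t) = 0,
\end{equation*}
with regular singular points at $t = \pm 1$, i.e.~at $r = 0$ and at the antipodal cut locus $r = \pi$. Standard Frobenius analysis (equivalently, the theory of Jacobi polynomials) shows that a solution regular at both endpoints exists precisely when $\lambda = \lambda_k := k(k + \alpha + \beta)$ for some $k \in \N_0$, and is then a polynomial of degree $k$ in $\cos r$.

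Finally, to conclude that \emph{every} eigenvalue of $\Delta$ on $M$ occurs in this list, I would invoke basic commutativity: given any eigenvalue $\mu$ and any eigenfunction $g$ to $\mu$, choose $p \in M$ with $g(p) \neq 0$ and radialise $g$ around $p$ by averaging over geodesic spheres. The result is again an eigenfunction to $\mu$ (since averaging commutes with $\Delta$) and is non-zero because its value at $p$ equals $g(p)$. Hence $\mu$ admits a non-trivial radial eigenfunction and must appear among the $\lambda_k$. The main obstacle I anticipate is the endpoint-regularity step: one has to use smoothness of the manifold at $p$ and across the antipodal set to exclude the non-polynomial Frobenius solutions of the hypergeometric equation; the remainder of the argument reduces to direct substitution.
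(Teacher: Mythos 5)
Your computation for $k=1$ is exactly the paper's Lemma \ref{firsteigenlem}: the ansatz $\cos r + c$ substituted into $u''+Hu'+\lambda u=0$ with $H=\frac{(\alpha+\beta)\cos+\beta}{\sin}$ forces $\lambda=\lambda_1$ and $c=\beta/\lambda_1$, and the normalisation $\lambda_1+\beta=\alpha+2\beta+1=n$ gives the stated function. Where you genuinely diverge from the paper is in proving that the list $(\lambda_k)$ exhausts the spectrum. You propose substituting $t=\cos r$ and running a Frobenius/Jacobi-polynomial analysis at the two regular singular points, excluding the non-polynomial solution by regularity at $r=0$ and $r=\pi$. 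The paper never does this: it already knows from Lemma \ref{phipolylem} that for \emph{every} eigenvalue $\lambda$ the averaged eigenfunction $\Phi_\lambda$ is a polynomial in cosine (proved by observing that $\spn{\Phi_\lambda(\cdot-t)\;|\;t\in\R}$ sits inside the finite-dimensional $V^\lambda$, hence $\Phi_\lambda$ solves a constant-coefficient linear ODE and, being even and $2\pi$-periodic, is a cosine polynomial), after which substituting $\sum_{i=0}^k a_i\cos^i$ into the ODE forces $\lambda=k(k+\alpha+\beta)$ from the top-degree coefficient and yields a recursion for the $a_i$. This buys a complete bypass of precisely the step you flag as your main obstacle. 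Your route can be closed, but note what it actually requires: at $t=-1$ the indicial exponents are $0$ and $(1-\alpha)/2$, so for $\alpha=0$ the second Frobenius solution $(1+\cos r)^{1/2}=\sqrt{2}\cos(r/2)$ is bounded and even $C^1$ in $r$; mere boundedness or continuity across the cut locus does not exclude it. You must use that the radialised eigenfunction $R_pA_p\varphi$ is a genuine smooth eigenfunction (basic commutativity, Theorem \ref{basiccom2}, plus elliptic regularity, as in Proposition \ref{odeprop}) and then apply the smoothness criterion Lemma \ref{smoothcrit}, which kills this solution because its first derivative at $r=\pi$ is nonzero. Conversely, to see that each $\lambda_k$ really occurs in the spectrum you still need that $R_p\Phi_{\lambda_k}$ is smooth on all of $M$; this is immediate from Lemma \ref{smoothcrit} since a polynomial in cosine has all odd derivatives vanishing at $0$ and $\pi$, but it should be said.
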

In Subsection \ref{var1} we show that this radial eigenfunction to the first eigenvalue 
yields an especially nice embedding. More precisely,
the geodesics are mapped into circles so that the geodesic symmetries are isometries. Hence the 
main result is established.
Alternatively, one can use the Bott-Samelson theorem \cite[Theorem 7.23]{besse_closed} and the statement of 
\cite[Theorem 1]{ranjan_first} to give an intrinsic version of the proof, cf. Subsection \ref{var2}.

\newpage

\subsection{Notations and Conventions}

In this subsection we fix some notations and general hypotheses, which 
are valid for the whole article. This is meant to serve the reader as
a guideline and to give them a feeling for the used notations. \\

Let $(M,g)$ be a connected Riemannian manifold of dimension $n$ 
with metric $g$. The Levi-Civita connection will be denoted by $\nabla$.
Denote by $T_pM$ the tangent space in $p\in M$ and by 
$TM$ the tangent bundle of $M$. Points in $TM$ will be denoted
by $(p,v)$ where $p\in M$ and $v\in T_pM$.\\


The geodesic distance between two points $p,q\in M$ will be denoted 
by $d(p,q)$.
The metric sphere of radius $R\ge 0$ around $p\in M$ is then given by 
$S^d_R(p):=\{q\in M \;|\; d(p,q)=R\}$. \\

We denote the cut locus of $p\in M$ by $\Cut(p)$. We write $\irp$ for the 
injectivity radius of $M$ at $p$ and $\ir$ for the injectivity radius
of $M$. The diameter of $M$ is denoted by $\diam$. \\

We also use the standard notation for the function spaces 
$L^2(M)$, $C^0(M)$, $C^\infty(M)$, $C^0([0,\infty[)$, 
$C^\infty([0,\infty[)$, $\dots$ and the space $\ell^2$ of
square-integrable sequences.\\

For an eigenvalue $\lambda\in\R$ of the Laplacian 
$\Delta$ we have the space of 
eigenfunctions $V^\lambda\subset C^\infty(M)$. \\

We abbreviate `Riemannian symmetric space of rank $1$' by `ROSS'. 
These are the Euclidean spheres $S^n$, the projective spaces $\K\text{P}^m$ and $\Oct\text{P}^2$ and 
the hyperbolic spaces $\K\text{H}^m$ and $\Oct\text{H}^2$, where $\K\in\{\R,\C,\Ham\}$. Here $m$ denotes 
the $\K$-dimension of the respective space, i.e.~$m\cdot\dim_{\R}(\K)=n$.\\

We use $\volS$ for the volume of the sphere of radius $1$ in $\R^n$.\\

The open geodesic ball of radius $0<R\le\irp$ around $p\in M$ is denoted by 
$B_R(p)$. The related ball in $T_pM$ is denoted by $B_R(0_p)$.
Furthermore, set $\hat B_R(p):=B_R(p)\setminus \{p\}$ and 
$\hat B_R(0_p):=B_R(0_p)\setminus \{0_p\}$
for the pointed balls.
Similarly the geodesic sphere $S_R(p)$ in $M$ and the related
sphere in $S_R(0_p)$ in $T_pM$ of radius $0<R<\irp$ are defined.\\

Polar coordinates are used throughout this article, i.e.~for a $v\in \hat B_R(0_p)$ where $0<R\le \irp$ we often
write $v=r\theta$ where $r:=\n{v}$ and $\theta:=\frac{v}{r}\in S_1(0_p)$. \\





%

For a smooth curve $\gamma$ in $M$ we denote by $T^\perp\gamma$ the subbundle 
of $\gamma^*TM$ normal to $\gamma'$. Furthermore we define a section $R_\gamma$ of 
$\End(T^\perp\gamma)$ by $R_\gamma = R(\cdot,\gamma')\gamma'$ where $R$ is the 
curvature tensor. For a section $S\in \Gamma(\End(T^\perp\gamma))$ of the endomorphism bundle
we set $S':=\nabla_{\gamma'}S$ where $\nabla$ is used for the induced connection on $\End(T^\perp\gamma)$.

\section{Preliminaries}

This section contains a big chunk of 
the necessary setup, as the definitions and some properties 
of the relevant objects in conjunction with
locally harmonic manifolds are given.
The most important results are the invariance of the density function
under the geodesic involution (Lemma \ref{deninvlem}), an equation which relates 
mean curvature and the density (Lemma \ref{meandenlem}) and a formula for the Laplacian 
of radial functions (Lemma \ref{lapofradlem}). 
In the last subsection we show that two screw lines are congruent if and only if they have 
got the same screw function (Lemma \ref{congscrewlem}).


\subsection{Jacobi Tensors}

The concept of Jacobi tensors comes in handy later on because it 
reduces complexity of notation. A useful reference is \cite[Section 2]{eschenburg_jacobi}.
Let $\gamma$ be a geodesic in $M$ and assume that $0$ is in its domain of definition.

\begin{defn}[Jacobi tensor]
  We call a section $J$ of the endomorphism bundle $\End(T^\perp\gamma)$ which satisfies
  $$J''+R_\gamma\circ J=0$$
  a \emph{Jacobi tensor to $\gamma$}.
\end{defn}

\begin{rem}
  Set $p:=\gamma(0)$. Take a basis $(e_2,\dots,e_n)$ of $T^\perp_p\gamma$ and denote by $(E_2,\dots,E_n)$ 
  its parallel translate along $\gamma$. Choose Jacobi fields $J_2,\dots,J_n$
  along $\gamma$ with $J_i(0),J_i'(0)\in T^\perp_p\gamma$ where $i=2,\dots,n$. We can define a
  Jacobi tensor $J$ to $\gamma$ by setting $JE_i:=J_i$ for $i=2,\dots,n$.
  It is easy to see that every Jacobi tensor to $\gamma$ can be written that way.
  If $(e_2,\dots,e_n)$ is an orthonormal basis of $T^\perp_p\gamma$, we get 
  $$JE_i=\sum_{j=2}^n g(J_i,E_j)E_j$$
  and if $(e_2,\dots,e_n)$ is additionally positively oriented 
  $$\Det J=\Det \left(g(J_i,E_j)\right)_{i,j=2,\dots,n}.$$
\end{rem}

\begin{defn}[associated Jacobi tensor]
  There is exactly one Jacobi tensor $J$ to $\gamma$ with $J(0)=0$ and $J'(0)=\id$.
  We call it the \emph{Jacobi tensor associated to $\gamma$}.
\end{defn}

\subsection{Density Function}


Local harmonicity is defined in terms of the density function, which will be 
examined in this subsection. From its definition it is not immediately clear why 
the density function is smooth and why it is called `density'. Therefore we 
give a formula for it in normal coordinates, which clarifies the situation.
The results of this subsection can also be found in \cite[Section 6.6]{willmore_geo}. 

\begin{defn}[density function]
  Choose $V\subset TM$ such that $\exp:V\to M$ is defined.
  Let $(p,v)\in V$ with $v\neq 0$ and set $\tilde v:=\frac{v}{\n{v}}$. 
  Let $J_v$ be the Jacobi tensor associated to the normalised geodesic $r\mapsto \exp_pr\tilde v$.
  The \emph{density function $\den$} is then defined by
  $$\den: V \to \R, \;\; (p,v)\mapsto \norm{v}^{1-n}\Det_{T^\perp_{\exp_p v}\gamma} \left(J_v(\norm{v})\right)$$
  where we set $\den(p,0_p):=1$.
\end{defn}

\begin{rem}
  The density function $\den$ is obviously continuous on $V$ and $\den(p,v)=0$ if
   and only if $p$ and $\exp_pv$ are conjugate along $r\mapsto \exp_pr\tilde v$.
  When fixing a point $p\in M$ and choosing a normal coordinate neighbourhood 
  $U$ around $p$, we will often write $\den_p(q):=\den(p,\exp_p^{-1}q)$ 
  for $q\in U$.
  If $M$ is complete, $\den$ is defined on the whole of $TM$.
\end{rem}

\begin{lem}[density in normal coordinates] \label{denincoords}
  Let $U$ be a normal neighbourhood around $p\in M$.
  Take $q\in U$ and let $(g_{q,ij})_{i,j=1\dots n}$ be the metric of $T_qM$ 
  expressed in the normal coordinates of $U$. Then 
  $$\den_p(q) = \sqrt{\det (g_{q,ij})_{i,j=1\dots n}}.$$
\end{lem}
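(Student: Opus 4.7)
The plan is to reduce the right-hand side to an $(n-1) \times (n-1)$ determinant via the Gauss lemma, identify that determinant with a Gram matrix of pushed-forward basis vectors, and then match it against the Jacobi tensor expression via the standard formula $J(r) = (d\exp_p)_{r\tilde v}(rw)$ for Jacobi fields vanishing at $0$.

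First I would fix an orthonormal basis $(e_1,\dots,e_n)$ of $T_pM$ with $e_1 = \tilde v$ and set $q := \exp_p v$. The associated normal coordinates give the coordinate vectors $\partial_i|_q = (d\exp_p)_v(e_i)$. By the Gauss lemma, $g(\partial_1|_q,\partial_j|_q) = g_p(e_1,e_j) = \delta_{1j}$, so the matrix $(g_{q,ij})$ has a block form with a leading $1$ and a vanishing first row and column, yielding
\[
\det(g_{q,ij})_{i,j=1,\dots,n} = \det\bigl(g(\partial_i|_q,\partial_j|_q)\bigr)_{i,j=2,\dots,n}.
\]

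Next I would analyze the left-hand side. Let $(E_2,\dots,E_n)$ be the parallel translates of $(e_2,\dots,e_n)$ along $\gamma(r) = \exp_p(r\tilde v)$, and let $J_2,\dots,J_n$ be the Jacobi fields along $\gamma$ with $J_i(0) = 0$ and $J_i'(0) = e_i$, so that $J_v E_i = J_i$. The standard identification of Jacobi fields vanishing at the base point with $d\exp_p$ gives
\[
J_i(r) = (d\exp_p)_{r\tilde v}(r e_i), \qquad \text{hence} \qquad J_i(\n{v}) = \n{v}\,\partial_i|_q.
\]
The vectors $\partial_i|_q$ for $i\ge 2$ lie in $T^\perp_q\gamma$ by the Gauss lemma, so the formula from the previous remark applies:
\[
\Det J_v(\n{v}) = \det\bigl(g(J_i(\n{v}),E_j(\n{v}))\bigr)_{i,j=2,\dots,n} = \n{v}^{n-1}\,\det\bigl(g(\partial_i|_q,E_j)\bigr)_{i,j=2,\dots,n}.
\]

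Finally I would observe that if $B_{ji} := g(\partial_i|_q,E_j)$ is the change-of-basis matrix from $(E_j)$ to $(\partial_i|_q)$ in the Euclidean space $T^\perp_q\gamma$, then the Gram matrix of the $\partial_i|_q$ equals $B^\top B$, so
\[
\det\bigl(g(\partial_i|_q,\partial_j|_q)\bigr)_{i,j=2,\dots,n} = (\det B)^2.
\]
Combining with the previous display gives $\Det J_v(\n{v}) = \n{v}^{n-1}\sqrt{\det(g_{q,ij})}$, and multiplying by $\n{v}^{1-n}$ yields the claim. The only slightly subtle points I expect are (i) being careful that the determinant in the definition of $\omega$ is taken in the normal bundle $T^\perp_q\gamma$, so that the Gauss lemma is exactly what matches it with the lower-right block, and (ii) checking the sign: both sides are positive by continuity from the center $p$ where $\omega_p(p) = 1 = \sqrt{\det \I}$, so the square root is unambiguous up to the conjugate locus.
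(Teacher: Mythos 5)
Your proposal is correct and follows essentially the same route as the paper: both identify $(d\exp_p)_v(e_i)$ with $\frac{1}{\n{v}}J_i(\n{v})$ for Jacobi fields vanishing at $p$, use the Gauss lemma to split off the radial direction, recognise the remaining block as a Gram matrix whose determinant is the square of $\Det J_v(\n{v})$ in the parallel frame, and resolve the square root by positivity of $\den_p$ on the normal neighbourhood. The paper merely compresses these steps into the single line $g_{q,ij}=\frac{1}{\n{v}^2}g(J_i(\n{v}),J_j(\n{v}))$ followed by ``taking the determinant''; your write-up makes the same argument explicit.
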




\begin{proof}

  We have $\den_p(p)=1=\sqrt{\det (g_{p,ij})_{i,j=1\dots n}}$.
  So assume $q\neq p$ and set $v:=\exp_p^{-1} q$ as well as $e_1:= \frac{v}{\n{v}}$. 
  Pick $e_2,\dots,e_n\in T_pM$ such that $(e_1,\dots,e_n)$ is a positively oriented orthonormal basis of $T_pM$.
  We identify this basis with the standard basis in $\R^n$.
  Denote by $J_1,\dots,J_n$ the Jacobi fields 
  along the geodesic $r\mapsto \exp_p re_1$ with initial conditions $J_i(0)=0_p$ and $J_i'(0)=e_i$ where 
  $i=1,\dots,n$. We get
   $$g_{q,ij} = g_{\exp_pv,ij} = g_{\exp_pv}\left( (d\exp_p)_v(e_i),(d\exp_p)_v(e_j) \right) 
   = \frac{1}{\n{v}^2} g_{\exp_pv}\left( J_i(\n{v}),J_j(\n{v})\right).$$
  Taking the determinant yields 
  $$\det (g_{q,ij})_{i,j=1\dots n}=\den_p(q)^2.$$
  The claim follows since $\den_p$ is positive on $U$.

%
%
%
%
%
%
%
%
%
\end{proof}

\begin{rem}
  This lemma shows that $\den$ is smooth in inner 
  points of its domain. Additionally it explains why we call $\den$ the density function since
  the Riemannian volume is defined by integration of $\den_p$.
\end{rem}

\subsection{Geodesic Involution}

In this subsection we show the invariance of the density under
the geodesic involution. This result is important for the 
proof of Proposition \ref{denpointprop}. It is also contained 
in \cite[Section 6.B]{besse_closed}.

\begin{defn}[(canonical) geodesic involution]
  Let $V\subset TM$ be the maximal subset of the tangent bundle such that $\exp:V\to M$
  is defined. The \emph{(canonical) geodesic involution} $i$ is then 
  defined by
  $$i:V\to V,\;\; (p,v)\mapsto \left(\exp_p(v), -(d\exp_p)_v(v)\right).$$
\end{defn}

\begin{rem}
  Indeed, this is well-defined as $i(V)\subset V$ and an involution as $i(i(p,v))=(p,v)$.
\end{rem}

\begin{lem}[density invariant under geodesic involution] \label{deninvlem}
  Let $V\subset TM$ be the maximal subset of the tangent bundle such that $\exp:V\to M$
  is defined. Then
  $$\forall\; (p,v)\in V:\;\; \den(p,v)=\den(i(p,v)).$$
\end{lem}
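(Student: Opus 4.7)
The plan is to reduce the equality $\den(p,v)=\den(i(p,v))$ to a Wronskian identity between Jacobi fields along the geodesic $\gamma$ from $p$ to $q := \exp_p v$ and along its reverse. The case $v=0$ is trivial, so assume $v\neq 0$ and set $r_0:=\n{v}$, $\tilde v := v/r_0$, $\gamma(r):=\exp_p(r\tilde v)$, and $\bar\gamma(s):=\gamma(r_0-s)$. Then $\bar\gamma(0)=q$ and $r_0\bar\gamma'(0) = -(d\exp_p)_v(v)=:w$, so $i(p,v)=(q,w)$ and $\n{w}=r_0$. Both densities to be compared thus have the form $r_0^{1-n}\Det J_{\bullet}(r_0)$ for the associated Jacobi tensors of $\gamma$ and $\bar\gamma$.

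First I would set up compatible frames. Pick an orthonormal basis $(e_2,\dots,e_n)$ of $v^\perp\subset T_pM$ and parallel translate along $\gamma$ to get $(E_2,\dots,E_n)$. A short check in coordinates shows that $\bar E_i(s):=E_i(r_0-s)$ is parallel along $\bar\gamma$ (the sign flip of $\bar\gamma'$ cancels the one in $ds$), so in particular $\bar E_i(r_0)=e_i$. I would similarly verify that if $X$ is a Jacobi field along $\gamma$, then $\bar X(s):=X(r_0-s)$ is Jacobi along $\bar\gamma$, because Jacobi's equation involves two derivatives and two factors of $\gamma'$, each picking up a sign that cancels.

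Next I would translate both determinants into the right pairing. Define Jacobi fields $J_i$ along $\gamma$ with $J_i(0)=0$, $J_i'(0)=e_i$, so that by the remark following the definition of the Jacobi tensor,
\[\Det J_v(r_0)=\Det\bigl(g(J_i(r_0),E_j(r_0))\bigr)_{i,j=2,\dots,n}.\]
For the reverse side, let $K_i$ be the Jacobi field along $\gamma$ with $K_i(r_0)=0$ and $K_i'(r_0)=-E_i(r_0)$. The reversal $\bar K_i$ is then a Jacobi field along $\bar\gamma$ with $\bar K_i(0)=0$ and $\bar K_i'(0)=E_i(r_0)=\bar E_i(0)$, so it is precisely the field playing the role of $J_i$ for the Jacobi tensor $J_w$ of $\bar\gamma$. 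Hence
\[\Det J_w(r_0)=\Det\bigl(g(\bar K_i(r_0),\bar E_j(r_0))\bigr)_{ij}=\Det\bigl(g(K_i(0),e_j)\bigr)_{ij}.\]

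The key step is the Wronskian identity: for Jacobi fields $X,Y$ along $\gamma$, $W(X,Y):=g(X',Y)-g(X,Y')$ is constant, which is immediate from Jacobi's equation together with the symmetry $g(R(X,\gamma')\gamma',Y)=g(R(Y,\gamma')\gamma',X)$. Evaluating $W(J_i,K_j)$ at the two endpoints gives
\[g(e_i,K_j(0)) = W(J_i,K_j)(0)=W(J_i,K_j)(r_0)=g(J_i(r_0),E_j(r_0)),\]
so the two matrices above are transposes of each other; taking determinants yields $\Det J_v(r_0)=\Det J_w(r_0)$, and dividing by $r_0^{n-1}$ finishes the proof. The only delicate point is the bookkeeping under the reparametrization $s\mapsto r_0-s$: checking that parallel transport and Jacobi's equation really do transform into their counterparts on $\bar\gamma$, and setting up $K_i$ with the correct sign at $r=r_0$ so that both boundary evaluations of $W(J_i,K_j)$ produce exactly the entries of the two determinants under comparison.
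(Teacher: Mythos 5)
Your proposal is correct and follows essentially the same route as the paper: the paper's parallel section $\mathcal{J}=(J^T)'\circ K - J^T\circ K'$ is exactly the tensor form of your scalar Wronskians $W(J_i,K_j)$, and in both cases evaluating at the two endpoints of the reversed geodesic identifies the two determinants. The only difference is cosmetic (individual Jacobi fields and a transposed matrix versus Jacobi tensors and a covariantly constant endomorphism).
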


\begin{proof}
  For $v=0_p$ the statement is true because $i(p,0_p)=(p,0_p)$. 

  So consider $(p,v)\in V$ with $v\neq 0_p$.
  Set $\tilde v:=\frac{v}{\n{v}}$ and for $r\in[0,\n{v}]$ set
  $\gamma(r):=\exp_p\left(r\tilde v\right)$.
  The density function in the point $(p,v)$ can be written as 
  $$\den(p,v) = \n{v}^{1-n}\Det_{T^\perp_{\gamma(\n{v})}\gamma}\left(J(\n{v})\right)$$
  where $J\in \Gamma(\End(T^\perp\gamma))$ is the Jacobi tensor associated to $\gamma$.
  By setting $\overline\gamma(r):=\exp_{\exp_p(v)}(-r(d\exp_p)_v(\tilde v))$ for $r\in[0,\n{v}]$ we get
  $$\den(i(p,v)) = \norm{v}^{1-n}\Det_{T^\perp_p\gamma}(\overline K(\n{v}))$$
  where $\overline K\in \Gamma(\End(T^\perp\overline\gamma))$ is the Jacobi tensor associated to $\overline\gamma$.
  Define the section $K$ of $\End(T^\perp\gamma)$ by
  $K(r):=\overline K(\n{v}-r)$ for $r\in[0,\n{v}]$.
  We remark that 
  $K'' + R_{\gamma}\circ K= 0$
  holds
  because of 
  $\nabla_{\gamma'}=\nabla_{-\overline\gamma'}=-\nabla_{\overline \gamma'}$
  and
  $R_{\overline\gamma}(\n{v}-r)=R_\gamma(r)$ for $r\in[0,\n{v}]$.

  Then
  $$\mathcal{J}:=(J^T)'\circ K - J^T\circ K'$$
  is a section of $\End(T^\perp\gamma)$ where $(\cdot)^T$ means 
  transposition of an endomorphism. We have
  \begin{align*}
    \mathcal{J}' &= ((J^T)'\circ K - J^T\circ  K')' \\
    &= (J^T)''\circ  K + (J^T)'\circ  K'
    - (J^T)'\circ  K'- J^T\circ  K'' \\
    &= (J^T)''\circ  K - J^T\circ  K'' \\
    &= -(R_\gamma\circ J)^T\circ  K 
    + J^T\circ (R_{\gamma} \circ  K) \\
    &= -J^T\circ R_\gamma^T\circ  K 
    + J^T\circ R_\gamma\circ  K \\
    &= -J^T\circ R_\gamma\circ  K 
    + J^T\circ R_\gamma \circ  K \\
    &= 0.
  \end{align*}
  Hence the section $\mathcal{J}$ is parallel along $\gamma$. 

  Because of 
  $$\mathcal{J}(0)=((J^T)'\circ  K)(0) - (J^T\circ  K')(0)= K(0)$$
  and 
  $$\mathcal{J}(\n{v})=((J^T)'\circ  K)(\n{v}) - (J^T\circ  K')(\n{v})=(J^T)'(\n{v})\circ \overline K(0) + J^T(\n{v})\circ \overline K'(0)=J^T(\n{v})$$ 
  we get that $J^T(\n{v})$ is the parallel translate of $K(0)$ along $\gamma$.
  That means 
  \begin{align*}
    \den(p,v) &= \norm{v}^{1-n}\Det_{T^\perp_{\gamma(\n{v})}\gamma}(J(\n{v})) \\
    &= \norm{v}^{1-n}\Det_{T^\perp_{\gamma(\n{v})}\gamma}\left(J^T(\n{v})\right) \\
    &= \norm{v}^{1-n}\Det_{T^\perp_p\gamma}(K(0)) \\
    &= \norm{v}^{1-n}\Det_{T^\perp_p\gamma}(\overline K(\n{v})) \\
    &= \den(i(p,v)).
  \end{align*}
\end{proof}

%
%
%
%
%
%


\subsection{Mean Curvature}

This subsection describes the relation between the mean 
curvature of geodesic spheres and the density function. Lemma \ref{meandenlem} is central
for the proof of various equivalences in the next section and the proof 
of Lichnerowicz's conjecture. A useful reference is \cite[Section 2]{eschenburg_jacobi}.

%
%


\begin{defn}[mean curvature (of geodesic spheres)] \label{meandef}
  Let $q\in \hat B_R(p)$ be a point in the pointed geodesic ball of radius $0<R\le \irp$ around $p\in M$.
  Set $v:=\exp_p^{-1}q$ and $\tilde v:= \frac{v}{\n{v}}$. 
  Let $J_v$ be the Jacobi tensor associated to the geodesic $r\mapsto \exp_pr\tilde v$.
  The \emph{mean curvature $\mean_p(q)$ (of the geodesic sphere $S_{\n{v}}(p)$) in the point $q$} is 
  defined by
  $$\mean_p(q):=\tr(J_v'\circ J_v^{-1})(\n{v}).$$
  
\end{defn}

\begin{rem}
  Define the section $\mathcal{S}_v\in\Gamma(\End(T^\perp\gamma))$ by
  $\Gamma(T^\perp\gamma)\ni X\mapsto \nabla_X\gamma'\in \Gamma(T^\perp\gamma).$
  Then $\mathcal{S}_v(\n{v})$ is the shape operator of $S_{\n{v}}(p)$ in the point $q$.
  Because of
  $$\Gamma(T^\perp\gamma)\ni J_v' X-\mathcal{S}_v J_v X=\nabla_{\gamma'}J_v X -\nabla_{J_v X} \gamma'=\left[\gamma',J_v X\right]\perp \Gamma(T^\perp\gamma)$$
  we get $J_v'=\mathcal{S}_v\circ J_v$. Hence our definition of $\mean_p$ coincides with the one usually given
  as the trace of the shape operator.
  We have 
  $$\sum_{i=2}^n \nabla^\perp_{E_i}E_i =\sum_{i=2}^n g(\gamma',\nabla_{E_i}E_i)\gamma' =-\sum_{i=2}^n g(\nabla_{E_i}\gamma',E_i)\gamma'  = -\mean_p \gamma'$$
  where $E_2,\dots,E_n$ are fields along $\gamma$ such that $(\gamma',E_2,\dots,E_n)$ is 
  orthonormal along $\gamma$ and $\nabla^\perp$ denotes the part of the connection tangent to $\gamma'$, i.e.~normal to the geodesic spheres.
\end{rem}


\begin{lem}[mean curvature through density] \label{meandenlem}
  For $q\in \hat B_R(p)$ as above set again $v:=\exp_p^{-1}q$, $r:=\n{v}$ and $\tilde v:=\frac{v}{r}$. Then
  $$\mean_p(q)=\frac{\partial_r \left(r^{n-1}\den\left(p,r\tilde v\right)\right)}{r^{n-1}\den(p,r\tilde v)}=\frac{n-1}{r}+\frac{\partial_r\den(p,r\tilde v)}{\den(p,r\tilde v)}.$$
\end{lem}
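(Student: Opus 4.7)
The plan is to reduce everything to the Jacobi identity for the derivative of a determinant, after recognising that $r^{n-1}\den(p,r\tilde v)$ is nothing but $\Det J(r)$ where $J$ is the Jacobi tensor associated to the normalised geodesic $\gamma(r):=\exp_p(r\tilde v)$.

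First I would fix $\tilde v\in S_1(0_p)$ and let $J\in\Gamma(\End(T^\perp\gamma))$ denote the Jacobi tensor associated to $\gamma$. By the very definition of the density function,
$$\den(p,r\tilde v)=r^{1-n}\,\Det_{T^\perp_{\gamma(r)}\gamma}(J(r)),$$
so that $r^{n-1}\den(p,r\tilde v)=\Det J(r)$. Since $q\in \hat B_R(p)$, the points $p$ and $q$ are not conjugate along $\gamma$, hence $J(r)$ is invertible for $r\in\,]0,\n{v}]$ (it is also invertible on a neighbourhood of that interval).

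Next I would invoke Jacobi's formula for the derivative of the determinant of a smooth curve of invertible endomorphisms, namely
$$\partial_r \Det J(r)=\Det J(r)\cdot \tr\bigl(J'(r)\circ J(r)^{-1}\bigr).$$
Dividing by $\Det J(r)=r^{n-1}\den(p,r\tilde v)$ and evaluating at $r=\n{v}$ immediately yields
$$\frac{\partial_r\bigl(r^{n-1}\den(p,r\tilde v)\bigr)}{r^{n-1}\den(p,r\tilde v)}=\tr(J'\circ J^{-1})(\n{v})=\mean_p(q),$$
where the last equality is Definition~\ref{meandef}. This settles the first equality.

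For the second equality I would simply apply the product rule to $r^{n-1}\den(p,r\tilde v)$, obtaining
$$\partial_r\bigl(r^{n-1}\den(p,r\tilde v)\bigr)=(n-1)r^{n-2}\den(p,r\tilde v)+r^{n-1}\partial_r\den(p,r\tilde v),$$
and divide through by $r^{n-1}\den(p,r\tilde v)$. There is no real obstacle here; the only subtlety is making sure $J(r)$ is invertible so that Jacobi's formula applies, which is guaranteed by the non-conjugacy hypothesis built into $q\in\hat B_R(p)$ with $R\le\irp$.
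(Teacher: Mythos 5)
Your proof is correct and follows essentially the same route as the paper: both identify $r^{n-1}\den(p,r\tilde v)$ with $\Det J_v(r)$, apply Jacobi's formula $(\Det J_v)'=\tr(J_v'\circ J_v^{-1})\Det J_v$ for the first equality, and use the product rule for the second. Your added remark on the invertibility of $J(r)$ away from conjugate points is a sensible justification that the paper leaves implicit.
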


\begin{proof}
  The first equality follows from the formula
  $$(\Det J_v)'=\tr (J_v'\circ J_v^{-1}) \Det(J_v).$$
  Hence
  $$\mean_p(q) = \frac{\partial_r \left(r^{n-1}\den\left(p,r\tilde v\right)\right)}{r^{n-1}\den(p,r\tilde v)} 
  = \frac{(n-1)r^{n-2}\den(p,r\tilde v) + r^{n-1}\partial_r\den(p,r\tilde v)}{r^{n-1}\den(p,r\tilde v)}
  =\frac{n-1}{r}+\frac{\partial_r\den(p,r\tilde v)}{\den(p,r\tilde v)}.$$
\end{proof}


\subsection{Radial and Averaged Functions}

Note that we only consider
functions on pointed geodesic balls in this subsection. More general considerations
are given for the special case of a Blaschke manifold later on.
Strictly speaking, there are no results in this subsection except of
Lemma \ref{lapofradlem}. We only define some notions for the following discussion.
Fix a point $p\in M$ and a number $0<R\le \irp$. 

\begin{defn}[normal and outward vector field]
  Denote by $E^p$ the \emph{normal and outward vector field} of 
  $\hat B_R(p)$ which is given by $(E^p)_q:=(d\exp_p)_v\left(\frac{v}{\n{v}}\right)$
  for $q\in\hat B_R(p)$ with $v:=\exp_p^{-1}q$.
\end{defn}


\begin{rem}
  $E^p$ is the unique unit vector field on $\hat B_R(p)$ such that $E^p$ is
  normal and outward along $S_r(p)$ for all $0 < r < R$.
\end{rem}

\begin{defn}[(associated) radial function]
  For a smooth function $F:\left]0,R\right[\to \R$ we define
  the \emph{(associated) radial function (around $p\in M$) on $\hat B_R(p)$} by 
  $$R_pF:\hat B_R(p)\to \R,\;\; q\mapsto F(d(p,q)).$$
  We call $R_p:C^\infty(]0,R[)\to C^\infty(\hat B_R(p))$
  \emph{radial operator (around $p$)}.
  Functions $f:\hat B_R(p)\to \R$ such that an $F:\left]0,R\right[\to\R$ exists
  with $f=R_pF$ are called \emph{radially symmetric functions (around $p$)} 
  or abbreviated \emph{radial functions (around $p$)}.
\end{defn}

\begin{rem}
  The radial operator is linear.
\end{rem}

\begin{defn}[average operator]
  Let $f:\hat B_R(p)\to \R$ be smooth. The \emph{averaged function $A_pf$ of $f$ (around $p\in M$)} is defined by
  $$A_pf:\left]0,R\right[\to \R,\;\;r\mapsto (A_pf)(r):=\frac{1}{\text{vol}(S_r(p))}\int_{S_r(p)}f|_{S_r(p)}\;dS_r(p).$$
  We call $A_p:C^\infty(\hat B_R(p))\to C^\infty(]0,R[)$ \emph{average operator (around $p$)}.
\end{defn}

\begin{rem}
  The average operator is linear.

\end{rem}

\begin{defn}[radial derivative]
  Let $E^p$ be the normal and outward vector field of $\hat B_R(p)$. We define 
  the \emph{radial derivative $f'$ of $f:\hat B_R(p)\to \R$} by
  $$f':\hat B_R(p)\to \R,\;\;q\mapsto f'(q):=(\nabla_{E^p}f)(q).$$
\end{defn}

\begin{rem}
  In terms of polar coordinates and the exponential map we can 
  write $f'(\exp_pr\theta)=\partial_rf(\exp_pr\theta)$
  where $0<r<R$ and $\theta\in S_1(0_p)$.
\end{rem}

\begin{lem}[properties of the radial operator]
  Let $h:\hat B_R(p)\to \R$ and $F,G:\left]0,R\right[\to \R$. Then
  \begin{enumerate}
    \item $A_pR_p F = F$
    \item $R_p (FG) = R_pF R_pG$
    \item $A_p(hR_pG)=GA_ph$
    \item $(R_p F)' = R_pF'$
  \end{enumerate}
\end{lem}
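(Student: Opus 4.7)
The plan is to handle each of the four items by unwinding the definitions of $R_p$, $A_p$ and the radial derivative; the unifying observation is that for any $G: ]0,R[\to\R$ and any $0<r<R$, the function $R_p G$ is constant with value $G(r)$ on the geodesic sphere $S_r(p)$, because $d(p,q)=r$ for all $q\in S_r(p)$.

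For (1), writing $(A_p R_p F)(r) = \frac{1}{\vol(S_r(p))}\int_{S_r(p)} (R_p F)|_{S_r(p)}\,dS_r(p)$ and using that $(R_p F)|_{S_r(p)}\equiv F(r)$, the constant pulls out of the integral and cancels the volume factor, giving $F(r)$. For (2), the identity is pointwise: $R_p(FG)(q) = (FG)(d(p,q)) = F(d(p,q))G(d(p,q)) = (R_p F)(q)\,(R_p G)(q)$. For (3), the same observation as in (1) lets us pull the constant $G(r)$ out of the spherical integral defining $A_p(h R_p G)$, leaving $G(r)(A_p h)(r)$.

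For (4), I would use the polar-coordinate formula for the radial derivative noted in the preceding remark. For $q=\exp_p(r\theta)$ with $0<r<R$ and $\theta\in S_1(0_p)$ one has $d(p,q)=r$, so
\[
(R_p F)'(\exp_p r\theta) = \partial_r (R_p F)(\exp_p r\theta) = \partial_r F(r) = F'(r) = (R_p F')(\exp_p r\theta),
\]
which gives the claim. The only minor thing to check is that this pointwise formula is legitimate, i.e.\ that $E^p$ restricted to the ray $r\mapsto \exp_p(r\theta)$ is the tangent vector of that unit-speed geodesic; this is immediate from the definition of $E^p$ via $(d\exp_p)_v(v/\norm{v})$.

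None of these steps is a serious obstacle; the entire lemma is essentially bookkeeping, and the only conceptual point worth emphasising is the constancy of radial functions on geodesic spheres, which simultaneously drives (1) and (3).
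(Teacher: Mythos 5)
Your proof is correct and follows essentially the same route as the paper: the paper dismisses (1)--(3) as clear (you simply spell out the constancy-on-spheres observation) and proves (4) by the identical polar-coordinate computation $\partial_r(R_pF)(\exp_p r\theta)=F'(r)$ from the preceding remark. Nothing further to add.
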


\begin{proof}
  The first three statements are clear. 

  Using the above remark we have 
  for $q\in \hat B_R(p)$ with $q=\exp_pr\theta$
  $$(R_pF)'(q) = \partial_r (R_pF)(\exp_pr\theta) = \partial_r F(r) = (R_pF')(q).$$
\end{proof}

\begin{lem}[Laplacian of radial functions] \label{lapofradlem}
  Let $f:\hat B_R(p)\to\R$ be a radial function. Then 
  $$\Delta f = -f'' - \mean_pf'.$$
\end{lem}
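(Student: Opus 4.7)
The plan is to write the Laplacian as $\Delta=-\text{div}\circ\grad$ (the analyst's sign convention, consistent with the later positive spectrum $\lambda_k=k(k+\alpha+\beta)$) and to evaluate both operations in polar coordinates around $p$, so that the mean-curvature term is delivered directly by Lemma \ref{meandenlem}.

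First, I would introduce polar coordinates $(r,\theta)\in\left]0,R\right[\times S_1(0_p)$ on $\hat B_R(p)$ via $q=\exp_p(r\theta)$. By the Gauss lemma the pulled-back metric has the block form $dr^2+r^2 h_r(\theta)$, so $\partial_r$ is a unit field orthogonal to the sphere factor that coincides with $E^p$, and $\grad d(p,\cdot)=E^p$. Combined with the $r^{n-1}$ Jacobian between Cartesian and polar coordinates on $T_pM$, Lemma \ref{denincoords} then identifies $\sqrt{|g|}$ in these coordinates with $r^{n-1}\den_p$. For a radial function $f=R_pF$ the chain rule gives $\grad f=F'(r)\,\partial_r=f'\,E^p$.

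Second, since the angular components of $\grad f$ vanish, the coordinate expression for the divergence collapses to
\[
\text{div}(\grad f)=\frac{1}{r^{n-1}\den_p}\partial_r\!\left(r^{n-1}\den_p\,F'(r)\right)=F''(r)+\frac{\partial_r(r^{n-1}\den_p)}{r^{n-1}\den_p}\,F'(r).
\]
By Lemma \ref{meandenlem} the coefficient of $F'(r)$ equals $\mean_p(q)$, whence $\text{div}(\grad f)=f''+\mean_p f'$, and consequently $\Delta f=-f''-\mean_p f'$, as claimed.

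The calculation is entirely routine; the only point that needs attention is the identification $\sqrt{|g|}=r^{n-1}\den_p$ in polar coordinates, which is immediate from Lemma \ref{denincoords} together with the polar Jacobian on $T_pM$. No substantive obstacle arises.
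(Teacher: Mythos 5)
Your proof is correct, but it takes a genuinely different route from the paper's. The paper computes $\Delta f$ as the negative trace of the Hessian in an adapted orthonormal frame $((E^p)_q,e_2,\dots,e_n)$ at a point $q\in S_r(p)$: the radial direction contributes $-f''$, the tangential second derivatives are split into the intrinsic Laplacian $\overline\Delta$ of the geodesic sphere (which kills $f$ since $f|_{S_r(p)}$ is constant) plus the normal components $\nabla^\perp_{e_i}e_i$, whose sum is identified with $-\mean_p E^p$ via the shape-operator description of the mean curvature given in the remark after Definition \ref{meandef}. That argument never touches the density function. You instead use $\Delta=-\operatorname{div}\circ\grad$ in polar coordinates, identify the volume density as $r^{n-1}\den_p$ (times an $r$-independent angular factor, which is why it drops out of the radial divergence --- worth saying explicitly), and then invoke Lemma \ref{meandenlem} to recognise $\partial_r\log\bigl(r^{n-1}\den_p\bigr)$ as $\mean_p$. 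Both computations are sound and of comparable length; yours has the advantage of making visible the volume element $r^{n-1}\den_p\,dr\,d\theta$ that the paper uses repeatedly afterwards (e.g.\ in the proof of Proposition \ref{equivalences}), while the paper's frame computation is independent of the density and reuses the decomposition directly in the proof of the basic commutativity ($\emph{2.}\Rightarrow\emph{6.}$ there), where the function is no longer radial and the intrinsic term $\overline\Delta f$ must be retained rather than discarded.
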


\begin{proof}
  Fix $0<r<R$ and let $q\in S_r(p)$. Denote the connection on $S_r(p)$ by 
  $\overline \nabla$ and the associated Laplacian by $\overline \Delta$. Since 
  $f$ is radial, $f|_{S_r(p)}$ is constant and $\overline \Delta f|_{S_r(p)}=0$.
  Take $e_2,\dots,e_n\in T_qM$ such that $((E^p)_q,e_2,\dots,e_n)$ is an orthonormal basis of $T_qM$.
  In the point $q$ we get
  \begin{align*}
    (\Delta f)(q) &= -\nabla^2_{(E^p)_q,(E^p)_q} f-\sum_{i=2}^n \nabla^2_{e_i,e_i} f \\
    &= -f''(q)-\sum_{i=2}^n \left(\nabla_{e_i}\nabla_{e_i} f - (\nabla_{e_i}e_i)f \right) \\
    &= -f''(q)-\sum_{i=2}^n \left(\overline\nabla^2_{e_i,e_i} f - (\nabla^\perp_{e_i}e_i)f\right) \\
    &= -f''(q)+(\overline\Delta f)(q) + \sum_{i=2}^n (\nabla^\perp_{e_i}e_i)f \\
    &= -f''(q) - \mean_p(q)\nabla_{(E^p)_q}f \\
    &= -f''(q) - \mean_p(q)f'(q).
  \end{align*}
\end{proof}

\begin{rem}
  In particular it holds $\Delta d(p,\cdot) = -\mean_p$ on $\hat B_{\irp}(p)$.
\end{rem}

\subsection{Screw Lines}

Let $N\in\N$ and $c:\R\to \R^N$ be a smooth curve which is 
parametrised by arc length. 
We will discuss some kind of generalisation of curves with
constant curvatures called screw lines. This is needed when discussing 
the nice embedding.
The following Lemma \ref{congscrewlem} is true for curves $c:\R\to\ell^2$ as well.
The ideas can also be found in \cite[Part II]{neumann_screw}.

\begin{defn}[screw function and screw line]
  We define the 
  \emph{screw function $S_{s_0}$ in $s_0\in \R$ of $c$} by 
  $$S_{s_0}:\R\to \R,\;\; s\mapsto \n{c(s_0+s)-c(s_0)}^2.$$ 
  The curve $c$ is called \emph{screw line} if its screw functions are
  independent of the chosen points, i.e.
  $$\forall\; s_0\in\R :\;\; S_{s_0}=S_0.$$ 
\end{defn}

\begin{lem} \label{congscrewlem}
  Let $c$ and $\overline c$ be screw lines
  which have the 
  same screw function. Then they are congruent, i.e.~there is an 
  isometry $I\in \text{\emph{Iso}}(\R^N)$ with 
  $I(c(s))=\overline c(s)$ for all $s\in\R$.
\end{lem}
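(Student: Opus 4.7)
The plan is to show that the screw line hypothesis reduces the statement to the classical fact that two subsets of $\R^N$ with identical pairwise distances are congruent. First, I would observe that for any screw line $c$ and any $s,t\in\R$ one has
$$\n{c(s)-c(t)}^2 = S_t(s-t) = S_0(s-t),$$
so the pairwise distance $\n{c(s)-c(t)}$ depends only on $s-t$ through the common screw function. Since $c$ and $\overline c$ share this screw function, we obtain $\n{c(s)-c(t)} = \n{\overline c(s)-\overline c(t)}$ for all $s,t\in\R$.

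After translating by $-c(0)$ in the domain of $c$ and by $-\overline c(0)$ for $\overline c$, it suffices to construct a linear isometry $L\in\operatorname{O}(N)$ sending $c(s)-c(0)$ to $\overline c(s)-\overline c(0)$ for every $s$, and then set $I(x):=L(x-c(0))+\overline c(0)$. To produce $L$, I use the polarisation identity: from the equality of all pairwise distances one concludes
$$\sklr{c(s)-c(0),\,c(t)-c(0)} = \sklr{\overline c(s)-\overline c(0),\,\overline c(t)-\overline c(0)}$$
for all $s,t\in\R$. Thus the Gram matrix of any finite collection of the vectors $c(s_i)-c(0)$ agrees with that of the $\overline c(s_i)-\overline c(0)$.

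Let $V:=\spn{c(s)-c(0)\;:\;s\in\R}\subset \R^N$ and $\overline V$ the analogous subspace for $\overline c$. Choose $s_1,\dots,s_k\in\R$ such that $(c(s_i)-c(0))_{i=1,\dots,k}$ is a basis of $V$; by the Gram-matrix equality the vectors $(\overline c(s_i)-\overline c(0))_{i=1,\dots,k}$ are also linearly independent and hence a basis of $\overline V$, and both subspaces have the same dimension. Define a linear map $L_0:V\to\overline V$ on this basis by $L_0(c(s_i)-c(0)):=\overline c(s_i)-\overline c(0)$. Preservation of the Gram matrix implies that $L_0$ is an isometry from $V$ onto $\overline V$. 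Extending $L_0$ by any linear isometry $V^\perp\to \overline V^\perp$ gives the desired $L\in\operatorname{O}(N)$.

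It remains to check that $L(c(s)-c(0))=\overline c(s)-\overline c(0)$ for \emph{every} $s\in\R$, not only for $s_1,\dots,s_k$. Expanding $c(s)-c(0)=\sum_{i=1}^k a_i(s)(c(s_i)-c(0))$, the coefficients $a_i(s)$ are determined by the linear system whose matrix and right-hand side are built from the inner products $\sklr{c(s_j)-c(0),c(s_i)-c(0)}$ and $\sklr{c(s)-c(0),c(s_i)-c(0)}$. Since these inner products coincide with the corresponding ones for $\overline c$, the analogous expansion for $\overline c(s)-\overline c(0)$ in the basis $(\overline c(s_i)-\overline c(0))$ has the \emph{same} coefficients $a_i(s)$; by linearity of $L$ this yields $L(c(s)-c(0))=\overline c(s)-\overline c(0)$. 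I do not expect a serious obstacle here: the whole argument is linear algebra, the only point deserving care being the passage from equality of distances to equality of inner products via polarisation and the subsequent use of the Gram-matrix rigidity to extend $L_0$ to all of $\R^N$.
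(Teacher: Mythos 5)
Your proposal is correct and follows essentially the same route as the paper: polarisation turns the common screw function into equality of all inner products $\sklr{c(s)-c(0)}{c(t)-c(0)}$, a basis of the span is chosen, and an orthogonal map defined on that basis is shown to carry $c(s)$ to $\overline c(s)$ for every $s$ because the expansion coefficients are determined by those inner products. The only cosmetic difference is that the paper runs Gram--Schmidt explicitly where you invoke Gram-matrix rigidity directly; both arguments are the same in substance.
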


\begin{proof}
  Firstly, we remark that for all $r,s,t\in\R$ holds
  $$\sklr{c(t)-c(r)}{c(s)-c(r)} = \frac{1}{2} (S_0(t-r) + S_0(s-r)-S_0(t-r-(s-r)))=\sklr{\overline c(t)-\overline c(r)}{\overline c(s)-\overline c(r)}.$$

  Without loss of generality we may assume that $c(0)=0=\overline c(0)$.
  We choose $t_1,\dots,t_k\in\R$ such that $(c(t_1),\dots,c(t_k))$ is a basis of the space 
  $\spn{c(t)\;|\;t\in\R}$. By applying the Gram-Schmidt process to this basis we get an orthonormal basis 
  $(e_1,\dots,e_k)$. We denote by $a_{ij}\in \R$ the coefficients of the change of basis given 
  by that process, i.e.~
  $$e_i=\sum_{j=1}^i a_{ij}c(t_j),\;\;i=1,\dots,k.$$ 
  We emphasise that the $a_{ij}$'s only depend on the scalar products
  $$\sklr{c(t_\nu)}{c(t_\mu)},\;\;\nu,\mu=1,\dots,k.$$
  Furthermore we fix an $s\in\R$, write 
  $$c(s)=\sum_{i=1}^k b_i(s)e_i$$
  and emphasise that the $b_i(s)$'s only depend on the scalar products
  $$\sklr{c(t_\nu)}{c(t)},\;\;\nu=1,\dots,k,\;\; t\in\R.$$

  Because of our first remark we get that $(\overline c(t_1),\dots,\overline c(t_k))$ is a basis of the space 
  $\spn{\overline c(t)\;|\;t\in\R}$ and $(\overline e_1,\dots,\overline e_k)$ with 
  $$\overline e_i:=\sum_{j=1}^i a_{ij}\overline c(t_j),\;\;i=1,\dots,k$$ 
  is the orthonormal basis we get by applying the Gram-Schmidt process. Furthermore it holds
  $$\overline c(s)=\sum_{i=1}^k b_i(s)\overline e_i.$$

  Let $A\in O(N)$ be an orthonormal transformation mapping
  $e_i$ into $\overline e_i$ for $i=1,\dots,k$.
  We get
  $$Ac(s)=\sum_{i=1}^k b_i(s)Ae_i = \sum_{i=1}^k b_i(s)\overline e_i =\overline c(s).$$
\end{proof}

\section{Local Harmonicity}

A rough definition for $M$ being locally harmonic could be
`locally the density function is radially symmetric'. The aim of this section 
is to state the definition more precisely and to give several characterisations of locally
harmonic manifolds. Especially Parts (2.)~and (6.)~of Proposition \ref{equivalences} are
important for our considerations. Furthermore we give examples and show that locally harmonic
manifolds are Einsteinian (Proposition \ref{harmeinsteinprop}).


\subsection{Definition and Equivalences} \label{Equivalences}

We give several equivalent definitions of a locally harmonic manifold.
Note that we show with Corollary \ref{analyticcor} that the following proposition is still true if we 
formulate it with $\irp$ instead of $\eps$. 
The basic commutativity (Proposition \ref{equivalences}(6.)) can be found in \cite[Section 1]{szabo_main}.
The commuting of the averaging operator with the radial derivative 
(Proposition \ref{equivalences}(3.)) seems to be nowhere mentioned. The rest of Proposition \ref{equivalences} can be
found in \cite[Proposition 6.21]{besse_closed}. 

\begin{defn}[locally harmonic]
  The Riemannian manifold $M$ is said to be \emph{locally harmonic at $p\in M$} 
  if there exists an $\eps>0$ such that $\den_p|_{\hat B_\eps(p)}$ is radial.
  If $M$ is locally harmonic at every point, we call it \emph{locally harmonic}.
\end{defn}

\begin{rem}
  Equivalently, we could require the existence of an $\Omega: [0,\eps[\to \R$ such that
  $$\forall\; v\in B_\eps(0_p):\;\; \den(p,v) = \Omega(\n{v}).$$
  Notice that the choice of $\eps$ and $\Omega$ could depend on $p$. 
  Actually, it does not, as we will prove in Proposition \ref{denpointprop}.
  The property `locally harmonic' is often abbreviated by `LH'. A manifold which is 
  LH is often called LH-manifold.
\end{rem}


\begin{prop}[equivalences]  \label{equivalences}
  Let $p\in M$. Then the following statements are equivalent:
  \begin{enumerate}
    \item $M$ is locally harmonic at $p$. 
    \item There is an $\eps>0$ and an $H:\left]0,\eps\right[\to \R$ with
      $\mean_p=R_pH$, i.e.~the mean curvature is radial.
    \item There is an $\eps>0$ such that for every $f:\hat B_{\eps}(p)\to \R$ 
      we have $(A_pf)'=A_pf'$, i.e.~the radial derivative commutes 
      with the average operator.
    \item There is an $\eps>0$ such that for every $f\in C^\infty(\hat B_\eps(p))$ with 
      $\Delta f=0$ we have $(A_pf)'=0$, i.e.~every harmonic function satisfies the mean value property.
    \item There is an $\eps>0$ and a non-constant $F:\left]0,\eps\right[\to \R$ with
      $\Delta R_pF=0$, i.e.~there is a non-constant radial solution of the Laplace equation.
    \item There is an $\eps>0$ such that for every $f:\hat B_{\eps}(p)\to \R$ 
      we have $\Delta R_pA_pf=R_pA_p \Delta f$, i.e.~the Laplace operator
      commutes with $R_p\circ A_p$.
  \end{enumerate}
\end{prop}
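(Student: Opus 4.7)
The plan is to establish the equivalences by a cycle of implications built on the two master formulas of the preceding subsections: Lemma \ref{meandenlem} relating $\mean_p$ to $\den$, and Lemma \ref{lapofradlem} for the Laplacian of radial functions. The natural pivot is (1) $\Leftrightarrow$ (2): Lemma \ref{meandenlem} gives $\mean_p(\exp_pr\theta)-\frac{n-1}{r}=\partial_r\log\den(p,r\theta)$, and since $\den(p,0_p)=1$ while $\mean_p\sim(n-1)/r$ near $p$ (so that the difference is locally integrable), one side is $\theta$-independent iff the other is, by integrating in $r$ from $0$.

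Starting from (2), I would derive (3), (5) and (6) by direct computation. Writing $\omega(r,\theta):=\den(p,r\theta)$ in polar coordinates, the operator $A_p$ is a weighted spherical mean with weight $\omega$; under (1) the weight factors out and $(A_pf)'=A_pf'$ reduces to differentiation under the integral, giving (3). For (5), Lemma \ref{lapofradlem} converts $\Delta R_pF=0$ into the linear ODE $F''+\mean_pF'=0$, which has obvious non-constant solutions once $\mean_p$ is radial. For (6), I would use the polar splitting $\Delta=-\partial_r^2-\mean_p\partial_r+\Delta^{(r)}$ (where $\Delta^{(r)}$ is the induced Laplacian on $S_r(p)$), note that $\int_{S_r(p)}\Delta^{(r)}f\,dS=0$ by Stokes, and match the resulting expression for $A_p\Delta f$ with $\Delta R_pA_pf$ from Lemma \ref{lapofradlem}.

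For the reverse implications I close a cycle. (3) $\Rightarrow$ (1) unwinds the same polar-coordinate computation: the difference $(A_pf)'-A_pf'$ is proportional to $\int f\,\bigl(D(r)\,\partial_r\omega-\omega\int\partial_r\omega\,d\theta\bigr)\,d\theta$ with $D(r):=\int\omega\,d\theta$, and vanishing for every $f$ forces $\partial_r\omega/\omega$ to be $\theta$-independent; together with $\omega(0,\theta)=1$ this makes $\omega$ radial. (5) $\Rightarrow$ (2) comes from the same ODE: if $F$ is non-constant then $F'$ is nowhere zero on $\left]0,\eps\right[$ (at any zero $r_0>0$ the equation forces $F''(r_0)=0$ as well, hence $F'\equiv 0$ by smooth ODE uniqueness), so $\mean_p=-F''/F'$ is radial. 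Most strikingly, (6) $\Rightarrow$ (2) is obtained by testing (6) against $f(q)=d(p,q)^2$: Lemma \ref{lapofradlem} gives $\Delta f=-2-2r\mean_p$, while $A_pf(r)=r^2$ is already radial, so (6) collapses to $\mean_p=R_pA_p\mean_p$.

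It remains to situate (4). The implication (6) $\Rightarrow$ (4) is clean: if $f\in C^\infty(B_\eps(p))$ is harmonic then (6) yields $\Delta R_pA_pf=0$, so $G:=(A_pf)'$ satisfies $G'=-\mean_pG$; the singularity $\mean_p\sim(n-1)/r$ at $p$ would force any non-trivial $G$ to blow up like $r^{-(n-1)}$, but smoothness of $f$ at $p$ yields $A_pf(r)=f(p)+O(r^2)$ (Taylor expansion in normal coordinates with odd first-order terms averaging out), hence $G\equiv 0$. The main obstacle is the closing direction (4) $\Rightarrow$ one of the other statements: the mean value property identifies the Poisson kernel of $B_r(p)$ at its centre with the uniform measure on $S_r(p)$ for every small $r$, and one must extract radiality of $\den$ from this isotropy at $p$. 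My plan here would be to apply (4) to harmonic extensions of carefully chosen boundary data (for instance, spherical harmonics pulled back via $\exp_p$) and read off that the $\theta$-dependent part of $\omega$ must vanish; this is the one step where the direct manipulation of the preceding lemmata does not suffice and a finer analytic argument has to step in.
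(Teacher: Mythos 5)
Your cycle never actually exits statement (4). You prove $6 \Rightarrow 4$, but the only implication \emph{out of} (4) is announced as a ``plan'' involving harmonic extensions of unspecified boundary data, so the mean value property is only shown to be a consequence of the other conditions, not equivalent to them. This is a genuine gap, and it is precisely the step where the paper has to work. The paper's $4 \Rightarrow 2$ chooses the boundary data to be the very quantity whose vanishing is wanted: set $H(r):=\partial_r\vol(S_r(p))/\vol(S_r(p))$, solve the Dirichlet problem on $\overline{B_r(p)}$ with $\Delta f=0$ inside and $f|_{S_r(p)}=\mean_p-R_pH$, and expand $0=\vol(S_r(p))(A_pf)'(r)$ using $\partial_r\int_{S_r(p)}f\,dS_r(p)=\int_{S_r(p)}(f'+f\mean_p)\,dS_r(p)$ together with Green's identity $\int_{S_r(p)}f'\,dS_r(p)=-\int_{B_r(p)}\Delta f\,dB_r(p)=0$; everything collapses to $\int_{S_r(p)}(\mean_p-R_pH)^2\,dS_r(p)=0$, whence $\mean_p=R_pH$ on every small sphere. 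Your suggestion of testing against pulled-back spherical harmonics does not obviously close the argument: the harmonic measure that (4) identifies with the Poisson kernel at $p$ is the \emph{weighted} measure $\den(p,r\theta)\,d\theta$ (normalised), not the round one, so ``isotropy at $p$'' still has to be converted into radiality of $\mean_p$ or $\den_p$ by some device such as the one above.

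The remaining implications are correct, and in places genuinely different from the paper's route. You go $3\Rightarrow 1$ directly, by observing that $(A_pf)'-A_pf'$ is the integral of $f$ against the signed density $\partial_r\den(p,r\theta)-\den(p,r\theta)\,D'(r)/D(r)$ and that this must vanish pointwise; the paper instead routes $3\Rightarrow 4\Rightarrow 2$. You test (6) with $d(p,\cdot)^2$ where the paper uses $d(p,\cdot)$; both work. The added $6\Rightarrow 4$ via the blow-up of nontrivial solutions of $G'=-\mean_pG$ is sound but redundant once the cycle is closed. One small caution in $5\Rightarrow 2$: the relation $F''+\mean_pF'=0$ is not literally an ODE for $F$ until you either restrict to a single ray or average it (the paper averages, obtaining $F''+(A_p\mean_p)F'=0$, and then applies uniqueness); your conclusion survives either fix.
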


\begin{proof}
  $\emph{1.} \Rightarrow \emph{2.:}$
  Choose an $\eps$ such that $\den_p:\hat B_\eps(p)\to\R$ is radial. 
  Then so is $\den_p':\hat B_\eps(p)\to\R$. By Lemma \ref{meandenlem} the 
  mean curvature is radial, too. 

  $\emph{2.} \Rightarrow \emph{1.:}$
  Choose $\eps>0$ and $H:\left]0,\eps\right[\to\R$ such that $\mean_p=R_pH$.
  Let $\hat H$ be the antiderivative of $H-\frac{n-1}{\id}$ in $]0,\eps[$. 
  Let $\theta\in S_1(p)$.
  The solution
  of the ODE
  $$\frac{y'}{y}=H-\frac{n-1}{\id}$$
  with initial condition 
  $$y\left(\frac{\eps}{2}\right)=\den\left(p,\frac{\eps}{2}\theta\right)$$
  is given by $y(r)=C(\theta)\exp(\hat H(r))$ for $r\in \left]0,\eps\right[$ where $C(\theta)$ is a 
  constant depending on $\theta$.
  Since $r\mapsto \den(p,r\theta)$ solves the ODE as well, we have 
  $\den(p,r\theta)=C(\theta)\exp(\hat H(r))$ for $r\in \left]0,\eps\right[$.
  Because $\den$ is continuous in $(p,0_p)$ with $\den(p,0_p)=1$ we get that 
  $$\lim_{r\to 0} C(\theta)\exp(\hat H(r))$$
  exists and equals $1$. Hence $C(\theta)$ does not depend on $\theta$ and $\den_p|_{\hat B_\eps(p)}$ is radial.

  $\emph{2.} \Rightarrow \emph{3.:}$
  Choose $\eps>0$, $H:\left]0,\eps\right[\to\R$ and $\Omega:\left]0,\eps\right[\to\R$ 
  such that $\mean_p=R_pH$ and $\den_p=R_p\Omega$ on $\hat B_\eps(p)$.
  Let $0< r < \eps$. 
  By taking polar coordinates and Lemma \ref{denincoords} into account we have 
  \begin{align*}
    (A_pf)(r) &= \frac{1}{\int_{S_1(0_p)} r^{n-1}\den(p,r\theta)\;d\theta}\int_{S_1(0_p)} f(\exp_p(r\theta))r^{n-1}\den(p,r\theta)\;d\theta \\
    &= \frac{1}{\int_{S_1(0_p)} r^{n-1}\Omega(r)\;d\theta}\int_{S_1(0_p)} f(\exp_p(r\theta))r^{n-1}\Omega(r)\;d\theta \\
    &= \frac{1}{\volS}\int_{S_1(0_p)} f(\exp_p(r\theta))\;d\theta.
  \end{align*}
  Taking the derivative yields the claim.

  $\emph{3.} \Rightarrow \emph{4.:}$
  Choose an $\eps>0$ such that for every $f\in C^\infty(\hat B_\eps(p))$ we have $(A_pf)'=A_pf'$. Suppose $\Delta f=0$.
  Hence for every $0<r<\eps$ we get by Green's first identity
  \begin{align*}
    (A_pf)'(r) = (A_pf')(r) &= \frac{1}{\vol(S_r(p))}\int_{S_r(p)} \nabla_{E^p} f \;dS_r(p) \\
     &= \frac{1}{\vol(S_r(p))}\int_{S_r(p)} \sklr{\grad f}{E^p} \;dS_r(p) \\
     &= -\frac{1}{\vol(S_r(p))}\int_{\hat B_r(p)} \Delta f \;d\hat B_r(p) \\
     &= 0.
  \end{align*}

  $\emph{4.} \Rightarrow \emph{2.:}$
  Choose an $\eps>0$ such that for every $f\in C^\infty(\hat B_\eps(p))$ with $\Delta f=0$ we have $(A_pf)'=0$.
  We set 
  $$H(r):=\frac{\partial_r\vol(S_r(p))}{\vol(S_r(p))}$$
  and show that $\mean_p =R_pH$. Take an $0<r<\eps$. By solving a Dirichlet problem 
  we can find an $f\in C^\infty(\overline{B_r(p)})$ with
  $\Delta f|_{\hat B_r(p)}=0$ and $f|_{S_r(p)}=\mean_p-R_pH$. Because of 
  \begin{align*}
   0 = \vol(S_r(p))(A_pf)'(r) &= - \frac{\partial_r \vol(S_r(p))}{\vol(S_r(p))} \int_{S_r(p)} f \;dS_r(p)
  + \partial_r \int_{S_r(p)} f \;dS_r(p)\\
   &= -\int_{S_r(p)} f R_pH \;dS_r(p)  +  \partial_r \int_{S_1(0_p)} f(\exp_p(r\theta))r^{n-1}\den(p,r\theta)\;d\theta\\
   &= -\int_{S_r(p)} f R_pH \;dS_r(p)  +  \int_{S_r(p)} f' \;dS_r(p) + \int_{S_r(p)} f\mean_p \;dS_r(p)\\
   &= -\int_{S_r(p)} f R_pH \;dS_r(p)  -  \int_{B_r(p)} \Delta f \;dB_r(p) + \int_{S_r(p)} f\mean_p \;dS_r(p)\displaybreak\\
   &= \int_{S_r(p)} (\mean_p - R_pH)^2 \;dS_r(p) 
  \end{align*}
  the claim follows. 

  $\emph{2.} \Rightarrow \emph{5.:}$
  Choose an $\eps>0$ such that $\mean_p:\hat B_\eps(p)\to\R$ is radial and 
  a function $H:\left]0,\eps\right[\to\R$ with $R_pH=\mean_p$.
  Let $F:\left]0,\eps\right[\to \R$ be a non-constant solution of the ODE 
  $$-y''-Hy'=0.$$
  We have
  $$\Delta R_pF = -(R_pF)''-\mean_p(R_pF)'=-R_pF''-R_pHR_pF'=R_p(-F''-HF')=0.$$

  $\emph{5.} \Rightarrow \emph{2.:}$
  Take an $\eps>0$ and a non-constant $F:\left]0,\eps\right[\to \R$ with $\Delta R_pF=0$.
  Since 
  $$0=\Delta R_pF = -R_pF''-\mean_pR_pF'$$
  we have 
  $$\mean_pR_pF'=-R_pF''$$
  and 
  $$0=-A_pR_pF''-A_p\mean_pA_pR_pF'=-F''-A_p\mean_pF'.$$
  If $F'$ had a zero $0<r_0<\eps$, $F$ would be constant, since $F$ would be a solution of the ODE
  $-y''-A_p\mean_py'=0$
  with $F'(r_0)=F''(r_0)=0$.
  So $\mean_p$ is radial with
  $$\mean_p=R_p\left(\frac{-F''}{F'}\right).$$

  $\emph{2.} \Rightarrow \emph{6.:}$
  Choose $\eps>0$ and $H:\left]0,\eps\right[\to\R$ such that $\mean_p=R_pH$.
  For a fixed $0<r<\eps$ denote the Laplacian on $S_r(p)$ by $\overline \Delta$. 
  As in the proof of Lemma \ref{lapofradlem} we get for a $q\in S_r(p)$
  $$(\Delta f)(q) = (\overline\Delta f)(q) - f''(q) -\mean_p(q)f'(q).$$
  By Green's first identity we have
  $$\int_{S_r(p)} \overline\Delta f|_{S_r(p)}\;dS_r(p) = 0$$
  and therefore again in $q\in S_r(p)$
  \begin{align*} 
    (R_pA_p \Delta f)(q) &= R_pA_p( (\overline\Delta f)(q) - f''(q) -\mean_p(q)f'(q)) \\
    &= -(R_pA_pf'')(q)-\mean_p(q)(R_pA_pf')(q) \\
    &= -(R_p(A_pf)'')(q)-\mean_p(q)(R_p(A_pf)')(q)\\
    &= (\Delta R_pA_p f)(q).
  \end{align*}

  $\emph{6.} \Rightarrow \emph{2.:}$
  Choose an $\eps>0$ such that for every $f:\hat B_{\eps}(p)\to \R$ 
  we have $\Delta R_pA_pf=R_pA_p \Delta f$.
  If we set $f:=d(p,\cdot)$, we get
  $$R_pA_p \Delta d(p,\cdot) = \Delta R_pA_p d(p,\cdot) = \Delta d(p,\cdot)= -\mean_p.$$
  This means that the mean curvature is a radial function.

\end{proof}




\subsection{Curvature Restrictions}

The main result of this subsection is that 
LH-manifolds are Einsteinian and therefore analytic.
The proof for this statement can be 
found in \cite[Section 6.8]{willmore_geo}. Furthermore, we can deduce that 
in an LH-manifold the density function $\den(p,v)$ does not depend on the point $p$, cf. \cite[Proposition 6.7.3]{willmore_geo}.
In this section we let $V\subset TM$ be the maximal subset of the tangent bundle such that $\exp:V\to M$ is defined.

\begin{prop}[harmonic manifolds are Einsteinian] \label{harmeinsteinprop}
  Every LH-manifold is an Einstein manifold.
\end{prop}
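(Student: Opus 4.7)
The plan is to extract the Einstein condition from the second-order Taylor expansion of the density function in normal coordinates. By Lemma \ref{denincoords}, in a normal chart around $p$ we have $\den_p(q)=\sqrt{\det(g_{q,ij})}$, and a standard calculation (expand the associated Jacobi tensor as $J_v(r)=r\,\id-\tfrac{r^3}{6}R_\gamma(0)+O(r^4)$, take the determinant on the $(n-1)$-dimensional normal subspace using $\tr R_\gamma(0)=\ric_p(\tilde v,\tilde v)$, and divide by $r^{n-1}$) yields
$$\den_p(\exp_pv)=1-\tfrac{1}{6}\,\ric_p(v,v)+O(\n{v}^3)$$
for $v\in B_\eps(0_p)$. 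This second-order expansion is the only analytic input required.

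Now, if $M$ is locally harmonic at $p$, then $\den_p$ is radial on some $\hat B_\eps(p)$, so the right-hand side depends only on $\n{v}$. Comparing the homogeneous quadratic terms, the quadratic form $v\mapsto\ric_p(v,v)$ depends only on $\n{v}$. A quadratic form on $T_pM$ with this property must be a scalar multiple of $g_p$: diagonalise $\ric_p$ in a $g_p$-orthonormal basis and compare its values on the coordinate axes of equal length, whereupon all eigenvalues coincide. Hence there is a $\lambda(p)\in\R$ with $\ric_p=\lambda(p)\,g_p$ at every $p\in M$, which is precisely the Einstein condition.

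For $n\ge 3$ one additionally obtains that $\lambda$ is constant on $M$ via Schur's lemma: contracting the second Bianchi identity and substituting $\ric=\lambda g$ (so that the scalar curvature equals $n\lambda$) gives $(n-2)\,d\lambda=0$, whence $\lambda$ is locally, and by connectedness globally, constant. The whole argument therefore reduces to one second-order expansion plus a small piece of linear algebra, and the expansion of $\den_p$ is the main, and essentially only, step that really requires a geometric computation; everything after it is formal.
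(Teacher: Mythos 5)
Your argument is correct. It differs from the paper's proof in the object it expands: the paper forms the section $\mathcal{J}:=rJ'\circ J^{-1}$, derives the Riccati-type identity $r\mathcal{J}'=\mathcal{J}-r^2R_\gamma-\mathcal{J}^2$, differentiates it twice to obtain $\mathcal{J}''(0)=-\tfrac{2}{3}R_\gamma(0)$, and takes the trace to conclude $-\tfrac{2}{3}\ric_p(\theta,\theta)=(rH(r))''(0)$, using the radiality of the \emph{mean curvature} (Proposition \ref{equivalences}(2.)); you instead expand the density itself, $\den_p(\exp_pv)=1-\tfrac{1}{6}\ric_p(v,v)+O(\n{v}^3)$, and use the radiality of $\den_p$ directly, which is the actual definition of local harmonicity. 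The underlying principle is identical -- the second-order jet of a radial quantity at $p$ forces $\ric_p(\theta,\theta)$ to be independent of $\theta$ -- and your expansion of $J_v$ and the trace identity $\tr R_\gamma(0)=\ric_p(\tilde v,\tilde v)$ are correct, as is the uniformity of the remainder needed to compare quadratic terms (the density is smooth near the zero section). What the paper's route buys is that iterating the differentiation of the Riccati equation produces the full hierarchy of Ledger's formulae, as its subsequent remark notes; what your route buys is brevity and the avoidance of the detour through the mean curvature. You also supply the Schur's lemma step giving constancy of $\lambda$ for $n\ge 3$, which the paper leaves implicit.
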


\begin{proof}

  Fix $p\in M$ and $\theta\in S_1(0_p)$.
  Choose an $\eps>0$ such that $\mean_p:\hat B_\eps(p)\to\R$ is radial and 
  a function $H:\left]0,\eps\right[\to\R$ with $R_pH=\mean_p$.
  For $r\in[0,\eps]$ set $\gamma(r):=\exp_p r\theta$.
  Denote by $J$ the Jacobi tensor associated to $\gamma$. 
  The inverse tensor $J^{-1}$ has got a singularity of order $n-1$ in $0$
  because of $\lim_{r\to 0} r^{1-n}\det J(r) = \den(p,0_p)=1$.
  So the section $\mathcal{J}:=rJ'\circ J^{-1}$ of $\End(T^\perp\gamma)$ is not singular in $0$.
  We get 
  $$r\mathcal{J}'=rJ'\circ J^{-1}+r^2J''\circ J^{-1}-r^2J'\circ J^{-1}\circ J'\circ J^{-1}=\mathcal{J}-r^2R_\gamma-\mathcal{J}^2$$
  since $(J^{-1})' = -J^{-1}\circ J'\circ J^{-1}$.
  Differentiating the equation $r\mathcal{J}'=\mathcal{J}-r^2R_\gamma-\mathcal{J}^2$ yields 
  $$\mathcal{J}'+ r\mathcal{J}''=\mathcal{J}'-2rR_\gamma-r^2R'_\gamma-\mathcal{J}'\circ\mathcal{J}-\mathcal{J}\circ\mathcal{J}'$$
  and differentiating once more yields
  $$\mathcal{J}''+ r\mathcal{J}'''=-2R_\gamma-2rR'_\gamma-2rR'_\gamma-r^2R''_\gamma-\mathcal{J}''\circ\mathcal{J}-\mathcal{J}'\circ\mathcal{J}'-\mathcal{J}'\circ\mathcal{J}'-\mathcal{J}\circ\mathcal{J}''.$$

  Since $\lim_{r\to 0} \frac{J(r)}{r} = J'(0)$ we get from the definition of $\mathcal{J}$ and the two equations above
  $$\mathcal{J}(0)=\id,\qquad \mathcal{J}'(0)=0\qquad \text{and}\qquad \mathcal{J}''(0)=-\frac{2}{3}R_\gamma(0).$$
  Taking the trace in the last equation gives
  $$-\frac{2}{3}\ric_p(\theta,\theta)=\tr \mathcal{J}''(0)=(\tr \mathcal{J})''(0)=(rH(r))''(0).$$
  This shows that $\ric_p(\theta,\theta)$ does not depend on the chosen 
  $\theta$. Hence $M$ is Einsteinian.

\end{proof}


\begin{rem}
  In dimensions $2$ and $3$ this implies that $M$ has constant sectional curvature. 
  Taking more and more derivatives of $r\mathcal{J}'=\mathcal{J}-r^2R_\gamma-\mathcal{J}^2$ yields the so-called
  `Ledger's formulae', cf. \cite[Section 6.8]{willmore_geo}. With their help one can give an 
  affirmative answer to Lichnerowicz's conjecture in dimension
  $4$, cf. \cite[Section 6.E]{besse_closed}. 
\end{rem}

\begin{thm}[{Kazdan-DeTurck, \cite[Theorem 5.2]{kazdan_regularity}}]
  Let $(M,g)$ be an Einstein manifold. Then the representation of $g$ in 
  normal coordinates is real analytic. 
\end{thm}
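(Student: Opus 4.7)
The plan is to route through \emph{harmonic} coordinates first and only afterwards transfer the analyticity statement to normal coordinates, because the Ricci tensor takes an especially nice form with respect to the former. Recall that coordinates $(x^1,\ldots,x^n)$ on an open set $U\subset M$ are harmonic if each coordinate function satisfies $\Delta x^i = 0$, equivalently $g^{jk}\Gamma^i_{jk}=0$. Standard theory guarantees the existence of such coordinates around every point, obtained by solving $n$ Dirichlet problems for the Laplacian on a small geodesic ball; the resulting chart is at least $C^{2,\alpha}$ whenever $g$ is $C^{2,\alpha}$, which suffices here since the metric of a smooth Riemannian manifold is in particular $C^{2,\alpha}$.

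The first key step is the well-known computation that, in harmonic coordinates,
$$\ric_{ij} = -\tfrac{1}{2}\, g^{kl}\,\partial_k\partial_l\, g_{ij} + Q_{ij}(g,\partial g),$$
where $Q_{ij}$ is a rational function of $g_{kl}$ and $\partial g_{kl}$, polynomial in the entries of $g^{-1}$. The harmonicity condition is precisely what kills the divergence-type terms that would otherwise appear; without it the principal symbol of $\ric$ is degenerate, which is the usual diffeomorphism-invariance obstruction. Once this identity is in place, the Einstein equation $\ric_{ij} = \lambda g_{ij}$ becomes a quasilinear elliptic second-order system for the unknowns $g_{ij}$ whose nonlinearity is a real-analytic function of its arguments.

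The second step is to invoke the Morrey–Nirenberg analyticity theorem (in the version used in \cite{kazdan_regularity}): a $C^{2,\alpha}$ solution of a quasilinear elliptic system whose coefficients depend analytically on $(x,u,\partial u)$ is itself real analytic on the interior. Applied to the system above, this yields that every component $g_{ij}$ is real analytic in the harmonic chart. A mild bootstrap (first $C^\infty$ by standard Schauder estimates, then analytic) handles the initial regularity hypothesis.

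The third step is to pass from harmonic to normal coordinates. Fix $p\in M$ and a harmonic chart around $p$; in it the metric is now known to be analytic. The geodesic ODE therefore has analytic coefficients, so by the Cauchy–Kovalevskaya/Poincar\'e theorem its solution depends analytically on initial data. Consequently $\exp_p$ is an analytic diffeomorphism of a neighbourhood of $0_p$ onto a neighbourhood of $p$, and so are its inverse and the identification of $T_pM$ with $\mathbb{R}^n$ via an orthonormal frame. Pulling back the (analytic) metric from the harmonic chart through this analytic change of coordinates gives analyticity in the normal chart, which is the assertion. The main obstacle in this program is honest step two: proving the Morrey-type analytic-elliptic regularity theorem in sufficient generality to handle the quasilinear dependence of $Q_{ij}$ on $g$ and $\partial g$; step one is a bookkeeping exercise, and step three is soft once step two has delivered an analytic metric on some chart.
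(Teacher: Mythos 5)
The paper offers no proof of this statement: it is imported wholesale from the cited reference, so there is nothing internal to compare your argument against. What you have written is, in outline, precisely the argument of DeTurck and Kazdan themselves, and it is correct as a sketch. The decomposition into three steps is the right one, and you correctly identify the only genuinely hard ingredient, namely the Morrey--Nirenberg analytic regularity theorem for quasilinear elliptic systems with analytic nonlinearity; everything else (the formula for $\ric_{ij}$ in harmonic coordinates, the analyticity of the exponential map of an analytic metric, the invariance of analyticity under analytic changes of chart) is routine.

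Two minor points. First, the ODE fact needed in your third step is the analytic dependence of solutions of an analytic ODE on time and initial conditions (proved by Cauchy's method of majorants), not the Cauchy--Kovalevskaya theorem, which concerns non-characteristic initial value problems for PDE; the substance of the step is unaffected. Second, for the elliptic system $\ric_{ij}=\lambda g_{ij}$ to have analytic nonlinearity you need $\lambda$ to be a constant (or at least analytic); this is automatic for $n\ge 3$ by Schur's lemma and is the intended reading of ``Einstein'' here, but it is worth making explicit, since a merely pointwise proportionality $\ric=f\,g$ with $f$ smooth would not suffice --- indeed the conclusion fails for a general surface.
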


\begin{rem}
  This implies that normal coordinates define a real analytic atlas on $M$. 
  So we see that the map $\exp:\operatorname{int}V \to M$ is real analytic by using normal coordinates.
\end{rem}

\begin{cor}[density function is analytic] \label{analyticcor}
  Let $(M,g)$ be an LH-manifold.
  Then the density function $\den:\operatorname{int}V \to \R$ is real analytic.
\end{cor}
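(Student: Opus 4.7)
The plan is to combine Lemma~\ref{denincoords} with the preceding remark that, in an Einstein manifold, normal coordinates give a real-analytic atlas in which $g$ is real-analytic. Fix an arbitrary $(p_0,v_0)\in\operatorname{int}V$; I will exhibit a real-analytic expression for $\den$ in a neighbourhood of $(p_0,v_0)$ inside the standard analytic chart on $TM$ induced by a normal chart around $p_0$.

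My preferred route is through the Jacobi equation, since it sidesteps the fact that the normal coordinates of Lemma~\ref{denincoords} depend on the base point. A short rescaling argument converts the definition into $\den(p,v)=\det K_{(p,v)}(1)$, where $K_{(p,v)}$ is the associated Jacobi tensor along the unnormalised geodesic $t\mapsto\exp_p(tv)$; this identification extends continuously (in fact smoothly) across the zero section. In the chart around $(p_0,v_0)$ described above, together with a local trivialisation of $TM$, the geodesic equation and the matrix Jacobi equation $J''+R(\cdot,\gamma')\gamma'\circ J=0$ with $J(0)=0$ and $J'(0)=\id$ become a linear ODE on an open subset of $\R^n\times\R^n$, with coefficients built from Christoffel symbols and curvature components of the analytic metric $g$. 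These coefficients are therefore real-analytic in $(p,v)$, and since $(p_0,v_0)\in\operatorname{int}V$ the interval $[0,1]$ lies inside the maximal interval of existence on a whole neighbourhood. The classical theorem on analytic dependence of ODE solutions on parameters and initial data then yields real-analyticity of the time-$1$ value of $J$, and hence of $\den$.

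The main obstacle is the book-keeping around objects that live on bundles which themselves vary with $(p,v)$: the Jacobi tensor is a section of $\End(T^\perp\gamma)$, and the normal coordinates of Lemma~\ref{denincoords} are tailored to the base point $p$. The first difficulty is resolved by working with the full $n\times n$ matrix Jacobi equation on an analytic trivialisation of $TM$ and extracting the determinant on the complement of $v$ (the parallel direction contributes only a trivial linear factor). As a sanity check one can also invoke Lemma~\ref{denincoords} directly: once the analytic chart around $p_0$ is fixed, the metric in normal coordinates around any nearby point $p$ depends analytically on both $p$ and the coordinate, so $\sqrt{\det(g_{q,ij})}$ is jointly analytic in $(p,q)$, and this pulls back to joint analyticity of $\den(p,v)$ via the analytic map $(p,v)\mapsto(p,\exp_p v)$.
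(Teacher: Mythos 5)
Your argument is correct, but your primary route is genuinely different from the paper's. The paper disposes of the corollary in one line: by Lemma \ref{denincoords} (more precisely, by the formula in its proof), $\den(p,v)$ is the Jacobian determinant of $\exp_p$ at $v$ measured with $g$, hence a composition of the real analytic maps $d\exp$, $g$ and $\det$ on all of $\operatorname{int}V$ -- no ODE theory needed once the Kazdan--DeTurck theorem has made the atlas and the exponential map analytic. Your main route instead trivialises $TM$ analytically, writes the coupled geodesic--Jacobi system as an ODE with coefficients analytic in $(p,v)$, and invokes analytic dependence of solutions on initial data; this is heavier machinery but is self-contained and manifestly global on $\operatorname{int}V$, and your rescaling $\den(p,v)=\det K_{(p,v)}(1)$ together with the treatment of the tangential direction is correct. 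Two small imprecisions are worth fixing: the coupled system is analytic but not \emph{linear} (only the Jacobi equation along a fixed geodesic is linear), which does not affect the conclusion; and your ``sanity check'' via $\sqrt{\det(g_{q,ij})}$ only works on normal neighbourhoods, where Lemma \ref{denincoords} applies and the determinant is positive -- beyond the conjugate locus $\den$ may vanish or change sign, so that formula cannot be used there, whereas the square-root-free expression $\den(p,v)=\Det\bigl((d\exp_p)_v\bigr)$ (equivalently, your Jacobi-tensor expression) is the one that extends to all of $\operatorname{int}V$.
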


\begin{proof}
  The density is given by a composition of the real analytic functions 
  $d\exp$, $\det$ and $g$.
\end{proof}

\begin{rem}
  We emphasise that only now we know that the density $\den_p$ of an LH-manifold 
  is radial till the injectivity radius and that $\den(p,v)$ only depends on $\n{v}$ for $(p,v)\in V$.
\end{rem}

\begin{prop}[density independent of the point] \label{denpointprop}
  Let $M$ be an LH-manifold.
  Then there is a function $\Omega:[0,\infty[\to \R$ 
  such that
  $$\forall (p,v)\in V: \;\; \den(p,v)=\Omega(\n{v}).$$
\end{prop}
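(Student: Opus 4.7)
The plan is to exploit the geodesic-involution invariance of $\den$ (Lemma \ref{deninvlem}) together with the Gauss lemma and the analyticity of $\den$ (Corollary \ref{analyticcor}). From the remark following that corollary, for each $p\in M$ there is already a real-analytic function $\Omega_p:[0,T_p)\to\R$, with $T_p:=\sup\{\n{v}:(p,v)\in V\cap T_pM\}$, such that $\den(p,v)=\Omega_p(\n{v})$ whenever $(p,v)\in V\cap T_pM$. It therefore suffices to show that the value $\Omega_p(r)$ is independent of $p$; any extension of the common function to $[0,\infty)$ will then serve as $\Omega$.

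The first step is the pointwise identity $\Omega_p(r)=\Omega_q(r)$ whenever $q=\exp_p v$ with $\n{v}=r$. Setting $w:=-(d\exp_p)_v(v)\in T_qM$, Lemma \ref{deninvlem} gives $\den(p,v)=\den(q,w)$, while the Gauss lemma ensures $\n{w}=\n{v}=r$. Hence
\[
  \Omega_p(r)=\den(p,v)=\den(q,w)=\Omega_q(r).
\]
In particular, for fixed $r<\irp$, the level set $N:=\{p\in M\;|\;\Omega_p(r)=\Omega_{p_0}(r)\}$ contains the entire geodesic sphere $S_r(p_0)$.

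The second step upgrades this to openness of $N$. Iterating the first step, $N$ also contains $\bigcup_{q\in S_r(p_0)}S_r(q)$. Working in normal coordinates around $p_0$ and comparing with the Euclidean model (in which this union is precisely $\overline{B_{2r}(p_0)}$), one sees that for $r$ small enough the union genuinely covers a neighbourhood of $p_0$ in $M$. Combined with the continuity of $p\mapsto\Omega_p(r)$ (which follows from that of $\den$), $N$ is both open and closed, and since $p_0\in N$, connectedness of $M$ forces $N=M$. Thus $\Omega_p(r)=\Omega_{p_0}(r)$ for every $p\in M$ and every sufficiently small $r>0$.

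The final step is analytic continuation: since the analytic functions $\Omega_p$ and $\Omega_{p_0}$ on the connected domains $[0,T_p)$ and $[0,T_{p_0})$ agree on a common small interval $(0,r_0)$, they coincide on the intersection of their domains. Gluing all $\Omega_p$ yields a single analytic function, and extending it arbitrarily to $[0,\infty)$ gives the desired $\Omega$. The main obstacle I anticipate is the openness claim in the second step: one must rigorously justify that the Euclidean picture of $\bigcup_{q\in S_r(p_0)}S_r(q)$ transfers to the curved setting. This can be handled by the normal-coordinate comparison sketched above or, more explicitly, by verifying via the implicit function theorem that for any $p$ close to $p_0$ one can find a point at distance exactly $r$ from both $p_0$ and $p$, giving a two-step chain of sphere-to-centre equalities $\Omega_{p_0}(r)=\Omega_m(r)=\Omega_p(r)$.
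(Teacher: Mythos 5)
Your proof is correct in substance and rests on the same two pillars as the paper's argument --- the involution invariance of Lemma \ref{deninvlem} and the analyticity of Corollary \ref{analyticcor} --- but the mechanism by which you propagate the value of the density from point to point is genuinely different. The paper fixes a small $r$, considers $p\mapsto\den(p,r\theta)$ on a tube around a compact curve, and shows that its differential vanishes by differentiating the involution identity through a variation of geodesics; building that variation requires $p$ and $q$ to be non-conjugate, i.e.\ $\den\neq 0$, so the paper must treat the zero set of the density by a separate connectedness-and-continuity argument. You instead apply the involution identity at finite scale, obtaining $\Omega_p(r)=\Omega_q(r)$ for every $q=\exp_p(r\theta)$ with no conjugacy hypothesis at all, and then chain twice through intersecting spheres to cover a neighbourhood of $p_0$. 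This removes the paper's case distinction and is arguably cleaner. The sphere-intersection step you flag as the main obstacle is unproblematic: for $0<d(p,p_0)<2r$ with $2r$ below the injectivity radius near $p_0$, the function $q\mapsto d(p,q)$ on $S_r(p_0)$ attains the values $d(p,p_0)+r>r$ and $\left|d(p,p_0)-r\right|<r$, hence attains $r$ by the intermediate value theorem.

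The one point you must shore up is the global open--closed argument. A single fixed $r>0$ need not be admissible at every point of $M$: the manifold is only assumed connected (not complete or compact), so neither $\operatorname{injrad}(p)$ nor the radius $T_p$ of the domain of $\Omega_p$ has a uniform positive lower bound. Your set $N$ may therefore fail to be defined everywhere, and openness at a point $p\in N$ far from $p_0$ requires $r$ to be small relative to data at $p$, not at $p_0$. Either replace $N$ by the set of $p$ for which $\Omega_p$ and $\Omega_{p_0}$ agree on some interval $\left]0,\eps\right[$ (open by your two-sphere argument with $r$ chosen locally, closed by continuity of $\den$ together with analyticity), or do what the paper does: join $p_0$ to an arbitrary point by a compact curve, let $\delta$ be half the minimum of the injectivity radius along it, and run your argument inside the resulting tube for all $r<\delta/2$ before invoking analytic continuation. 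With that repair the proof is complete.
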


\begin{proof}
  Let $\sigma:[0,1]\to M$ be a smooth curve in $M$. Set 
  $$\delta:=\frac{1}{2}\min_{t\in[0,1]}\operatorname{injrad}(\sigma(t))$$
  and 
  $$U:=\bigcup_{t\in[0,1]}B_\delta(\sigma(t)).$$
  Then $U$ is open and connected. The density $\den(p,r\theta)$ is defined for
  $p\in U$, $\theta\in S_1(0_p)$ and $0\leq r < \delta$.
  Pick an $0\leq r < \delta$ and define $\overline \den(r,\cdot):U\to \R$ by 
  $\overline \den(r,p):=\den(p,r\theta)$. This is well-defined, i.e. does not 
  depend on $\theta\in S_1(0_p)$, because of the local harmonicity of $M$. 
  
  We will show that for every $p\in U$ with $\overline \den(r,p)\neq 0$ the derivative, 
  namely $(d\overline \den(r,\cdot))_p:T_pM\to \R$, vanishes. This implies that 
  $\overline \den(r,\cdot)$
  is constant on the components of 
  $U\setminus\overline \den(r,\cdot)^{-1}(\{0\})$. By the connectedness of $U$ and the continuity of 
  $\overline \den(r,\cdot)$
  we get the following. 
  In the case $\overline \den(r,\cdot)^{-1}(\{0\})=\emptyset$ we have a constant
  $\overline \den(r,\cdot)$.
  In the case $\overline \den(r,\cdot)^{-1}(\{0\})\neq \emptyset$ we have
  $\overline \den(r,\cdot)=0$.

  Let $u\in T_pM$. In order to show
  $(d\overline \den(r,\cdot))_p(u)=0$
  we construct a curve through $p$ with initial velocity $u$.
  Take a normalised geodesic $\gamma:[0,r]\to M$ with 
  $\gamma(r)=p$ and $g_p(\gamma'(r),u)=0$. Set $q:=\gamma(0)$.
  Because of $\den(p,-r\gamma'(r))=\overline \den(r,p)\neq 0$ the points $p$ and $q$ 
  are not conjugate along $\gamma$. Choose an $\eps>0$ and a one-parameter family of geodesics 
  $\gamma_s$ with $s\in \left]-\eps,\eps\right[$ such that
  $\gamma_s(0)= q$ for $s\in \left]-\eps,\eps\right[$ and 
  $$\left.\frac{d}{ds}\right|_{s=0}\gamma_s(r)=u.$$
  By the invariance under the geodesic involution (Lemma \ref{deninvlem}) we have 
  $$\overline \den(r,q)=\den(q,r\gamma_s'(0))=\den(\gamma_s(r),-r\gamma_s'(r))=\overline \den(r,\gamma_s(r)).$$
  Hence
  $$(d\overline \den(r,\cdot))_p(u)=\left.\frac{d}{ds}\right|_{s=0}\overline \den(r,\gamma_s(r))=\left.\frac{d}{ds}\right|_{s=0}\overline \den(r,q)=0.$$
  We get that 
  $\overline \den(r,\cdot)$ is constant on $U$ and therefore
  $$\den(\sigma(0),r\theta)=\overline \den(r,\sigma(0))=\overline \den(r,\sigma(1))=\den(\sigma(1),r\theta)$$ 
  for $0\leq r < \delta$.
  By the above Lemma \ref{analyticcor} we get the claim.


\end{proof}



\subsection{Examples}

We compute the density functions of the ROSSs,
cf. \cite[Section 3.E]{besse_closed}, and show that 
locally symmetric spaces of rank $1$ are examples of LH-manifolds.


%
%
%

\begin{prop}[density functions of the ROSSs] \label{denofrossprop}
  Let $p\in M$ and $\theta\in S_1(0_p)$.
  Set $d(\K)=\dim_\R(\K)$ for $\K\in\{\R,\C,\Ham\}$ and denote by $m$ the $\K$-dimension of the ROSSs.
  If we assume that the hyperbolic spaces have sectional curvature between $-1$ and $-\frac{1}{4}$
  we get
  \begin{center}
  \begin{tabular}{r|c|c|c}
     $M$ & $\R\text{\emph{H}}^m$ & $\K\text{\emph{H}}^m$ & $\Oct\text{\emph{H}}^2$ \\\hline
     $r^{d(\K)m-1}\den(p,r\theta)$ & $(\sinh r)^{m-1}$ & $(\sinh r)^{d(\K)-1} (2\sinh\frac{r}{2})^{d(\K)(m-1)}$ & $(\sinh r)^7 (2\sinh\frac{r}{2})^8$
  \end{tabular}
  \end{center}
  for $0\le r<\infty$
  and if we assume that the projective spaces have diameter $\pi$ we get
  \begin{center}
  \begin{tabular}{r|c|c|c}
     $M$ & $S^m$ & $\K\text{\emph{P}}^m$ & $\Oct\text{\emph{P}}^2$ \\\hline
     $r^{d(\K)m-1}\den(p,r\theta)$ & $(\sin r)^{m-1}$ & $2^{\frac{d(\K)}{2}(m-1)}(\sin r)^{d(\K)-1} (1-\cos r)^{\frac{d(\K)}{2}(m-1)}$ & $16(\sin r)^7 (1-\cos r)^4$
  \end{tabular}
  \end{center}
  for $0\le r\le \pi$.
\end{prop}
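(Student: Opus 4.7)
The plan is to compute, for each ROSS, the Jacobi tensor $J_v$ associated to a unit-speed geodesic $\gamma(r)=\exp_p r\theta$ and then form $r^{n-1}\den(p,r\theta)=\Det J_v(r)$ directly from its definition. The decisive structural input is that every ROSS is locally symmetric, so $R=R(\cdot,\gamma')\gamma'$ is parallel along $\gamma$; equivalently, in any parallel orthonormal frame $(E_2,\dots,E_n)$ of $T^\perp\gamma$, the matrix of $R_\gamma$ is constant. Hence I may pick the parallel frame so that $R_\gamma$ is diagonal with constant eigenvalues $\kappa_2,\dots,\kappa_n$, and the Jacobi equation $J''+R_\gamma\circ J=0$ with $J(0)=0$, $J'(0)=\id$ decouples into scalar ODEs $j_i''+\kappa_i j_i=0$, $j_i(0)=0$, $j_i'(0)=1$. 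These have explicit solutions
\begin{equation*}
 j_i(r)=\begin{cases} \sin(\sqrt{\kappa_i}\,r)/\sqrt{\kappa_i}, & \kappa_i>0, \\ r, & \kappa_i=0, \\ \sinh(\sqrt{-\kappa_i}\,r)/\sqrt{-\kappa_i}, & \kappa_i<0, \end{cases}
\end{equation*}
and $r^{n-1}\den(p,r\theta)=\prod_{i=2}^{n}j_i(r)$.

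Next I reduce the proposition to a spectral count: for each ROSS I must read off the eigenvalues of $R_\gamma$ acting on $T^\perp\gamma\subset\K^m$ (respectively $\Oct^2$), together with multiplicities. For the constant-curvature cases $S^m$, $\R\text{H}^m$ (curvature $\pm 1$ under the chosen normalisations) the whole $(m-1)$-dimensional normal space is a single eigenspace with eigenvalue $\pm 1$, giving the factors $(\sin r)^{m-1}$ and $(\sinh r)^{m-1}$. For $\K\text{P}^m$ (respectively $\K\text{H}^m$) with $\K\in\{\C,\Ham\}$, the $\K$-line through $\theta$ minus $\R\theta$ contributes $d(\K)-1$ eigenvectors of eigenvalue $\pm 1$ (the holomorphic/quaternionic sectional curvature under the chosen normalisation), while the $\K$-orthogonal complement contributes $d(\K)(m-1)$ eigenvectors of eigenvalue $\pm\tfrac14$. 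Inserting into the product and using the identities $\sin(r/2)/(1/2)=2\sin(r/2)$ and $2\sin(r/2)=\sqrt{2(1-\cos r)}$ (respectively $2\sinh(r/2)$ in the noncompact case) reproduces the stated formulae; in particular, for $\K\text{P}^m$,
\begin{equation*}
 r^{d(\K)m-1}\den(p,r\theta)=(\sin r)^{d(\K)-1}\bigl(2\sin(r/2)\bigr)^{d(\K)(m-1)}=2^{\tfrac{d(\K)}{2}(m-1)}(\sin r)^{d(\K)-1}(1-\cos r)^{\tfrac{d(\K)}{2}(m-1)}.
\end{equation*}
The Cayley plane $\Oct\text{P}^2$ is the same argument with $d(\Oct)-1=7$ eigenvectors of eigenvalue $1$ and $d(\Oct)=8$ of eigenvalue $\tfrac14$, yielding $(\sin r)^7(2\sin(r/2))^8=16(\sin r)^7(1-\cos r)^4$; analogously for $\Oct\text{H}^2$.

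The genuine work — and the main obstacle — is the spectral decomposition of $R_\gamma$ on $\K\text{P}^m$, $\K\text{H}^m$ and $\Oct\text{P}^2$, $\Oct\text{H}^2$, i.e.\ identifying which two sectional curvatures actually occur and with which multiplicities under the normalisation that fixes the maximum to $1$ (or, in the compact cases, the diameter to $\pi$). For $\C\text{P}^m$ and $\Ham\text{P}^m$ this follows from the standard Fubini–Study / quaternion–Kähler curvature formula $R(X,Y)Z=g(Y,Z)X-g(X,Z)Y+\sum_\alpha\bigl(g(J_\alpha Y,Z)J_\alpha X-g(J_\alpha X,Z)J_\alpha Y+2g(X,J_\alpha Y)J_\alpha Z\bigr)$ (one $J_\alpha$ for $\C$, three for $\Ham$), from which a short computation of $R(v,\theta)\theta$ separates $v$ into its $\spn{J_\alpha\theta}$-component (eigenvalue $1$) and its orthogonal complement (eigenvalue $\tfrac14$). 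The Cayley case is handled identically using the standard curvature tensor of the Cayley projective/hyperbolic plane. Once the spectral data is tabulated, the computation of the determinants is entirely routine and the proposition follows case by case.
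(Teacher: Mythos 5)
Your proposal is correct and follows essentially the same route as the paper: diagonalise the parallel operator $R_\gamma$ in a parallel orthonormal frame, solve the decoupled scalar Jacobi equations with eigenvalues $1$ (multiplicity $d(\K)-1$) and $\tfrac14$ (multiplicity $d(\K)(m-1)$), and take the product of the resulting $\sin$/$\sinh$ factors. The paper merely carries this out explicitly for $\C\text{P}^m$ and declares the remaining cases analogous, whereas you sketch all cases; the substance is identical.
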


\begin{proof}
  We only consider $M:=\C\text{P}^m$ since the computations for the other spaces work similarly.
  Choose a geodesic $\gamma$ with $\gamma(0)=p$ and $\gamma'(0)=\theta$. We denote 
  the imaginary unit by $\iu$. Choose 
  $e_3,\dots,e_{2m}\in T_p^\perp\gamma$ such that $(\theta\cdot \iu,e_3,\dots,e_{2m})$ is a basis of $T_p^\perp\gamma$ 
  in which $R_\gamma(0)$ is diagonal. Denote by $(E_2,E_3,\dots,E_{2m})$ the parallel translate of 
  $(\theta\cdot \iu,e_3,\dots,e_{2m})$ along $\gamma$. Then $R_\gamma$ is diagonal in the basis $(E_2,E_3,\dots,E_{2m})$
  since $R$ is parallel.

  In order to compute the Jacobi fields along $\gamma$ we need the eigenvalues of $R_\gamma(0)$. 
  They are $1$ and $\frac{1}{4}$ because of 
  $$g_p(R(\theta\cdot \iu,\theta)\theta,\theta\cdot \iu)=1$$ 
  and 
  $$g_p(R(e_j,\theta)\theta,e_j)=\frac{1}{4}, \;\; j=3,\dots,2m.$$ 
  So $J_2(r):=(\sin r) E_2(r)$ and $J_j(r):=(2\sin\frac{r}{2}) E_j(r)$ are Jacobi fields along $\gamma$ with the
  initial conditions $J_2(0)=0$, $J_2'(0)=\theta\cdot \iu$ and $J_j(0)=0$, $J_j'(0)=e_j$ where $j=3,\dots,2m$.
  Hence 
  $$r^{2m-1}\den(p,r\theta)=(\sin r) \left(2\sin\frac{r}{2}\right)^{2(m-1)}=2^{m-1}(\sin r)(1-\cos r)^{m-1}.$$
\end{proof}

\begin{cor}[locally symmetric spaces and local harmonicity]
  Let $M$ be a locally Riemannian symmetric space. Then $M$ is LH if and only if it is of rank $1$ or flat.
\end{cor}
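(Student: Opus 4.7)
The plan is to handle the two directions separately, the reverse direction being the substantive one. For the ``if'' direction, a flat manifold has $\den\equiv 1$, which is trivially radial, and for the rank-$1$ symmetric spaces Proposition~\ref{denofrossprop} has just expressed $\den(p,r\theta)$ as an explicit function of $r$ alone; both cases satisfy the definition of locally harmonic.

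For the converse, I would reduce the problem to one about the eigenvalues of the Jacobi operator $R(\cdot,\theta)\theta|_{\theta^\perp}$. Fix $p\in M$ and $\theta\in S_1(0_p)$, and set $\gamma(r):=\exp_p(r\theta)$. Since $M$ is locally symmetric, $R_\gamma$ is a parallel section of $\End(T^\perp\gamma)$, so I can choose a parallel orthonormal frame $(E_2,\dots,E_n)$ of $T^\perp\gamma$ that simultaneously diagonalises $R_\gamma$ with constant eigenvalues $\lambda_2(\theta),\dots,\lambda_n(\theta)$. The Jacobi equation decouples in this frame into scalar ODEs $f_i''+\lambda_i(\theta)f_i=0$ with $f_i(0)=0$, $f_i'(0)=1$, whose solutions $s_{\lambda_i(\theta)}$ are $\sin$, a linear function, or $\sinh$ according to the sign of $\lambda_i(\theta)$. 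Hence
\[
r^{n-1}\den(p,r\theta)\;=\;\prod_{i=2}^{n}s_{\lambda_i(\theta)}(r).
\]
By Proposition~\ref{denpointprop} the left-hand side depends only on $r$, so the same is true of the right-hand side. The expansion $\log(s_\lambda(r)/r)=\sum_{k\geq 1}c_k\lambda^k r^{2k}$ has all $c_k\neq 0$ (for instance $c_1=-\tfrac{1}{6}$, $c_2=-\tfrac{1}{180}$), so comparing Taylor coefficients in $r$ shows that every Newton power sum $\sum_i\lambda_i(\theta)^k$ is $\theta$-independent. By Newton's identities the full multiset $\{\lambda_2(\theta),\dots,\lambda_n(\theta)\}$ of eigenvalues of $R(\cdot,\theta)\theta|_{\theta^\perp}$ does not depend on $\theta\in S_1(0_p)$.

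The hard part will be deducing from this spectral constancy that $M$ is locally flat or a ROSS. I would pass to the universal cover, use the De Rham decomposition to reduce to the irreducible case, and then invoke the restricted root structure of symmetric spaces: in an irreducible symmetric space of rank $k$, every unit vector $\theta$ lies in some maximal flat, so $\ker R(\cdot,\theta)\theta|_{\theta^\perp}$ always has dimension at least $k-1$, with equality for regular $\theta$ and strict inequality for singular $\theta$ on a Weyl chamber wall. Since such walls exist as soon as $k\geq 2$, the multiplicity of the zero eigenvalue would then vary with $\theta$, contradicting what was just established. Hence $k\leq 1$, and Ledger's classification cited in the introduction identifies $M$ as flat or a rank-$1$ Riemannian symmetric space.
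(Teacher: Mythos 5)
Your proposal is correct in substance, but it takes a genuinely different route on the hard direction. The paper disposes of ``LH $\Rightarrow$ rank $1$ or flat'' in one line by citing Ledger and Eschenburg, and proves only the converse, exactly as you do: Proposition~\ref{denofrossprop} gives radial densities for the ROSSs, flatness gives $\den\equiv 1$, and the local-to-global isometry of locally symmetric spaces transports this to $M$. You instead supply an actual argument for the forward direction: parallelism of $R_\gamma$ decouples the Jacobi equation, radiality of $\prod_i s_{\lambda_i(\theta)}$ together with the nonvanishing of the coefficients of $\log(s_\lambda(r)/r)$ and Newton's identities forces the spectrum of $R(\cdot,\theta)\theta|_{\theta^\perp}$ to be $\theta$-independent, and the restricted-root structure then rules out rank $\ge 2$ (the multiplicity of the zero eigenvalue jumps on Weyl chamber walls, which exist precisely when the rank is at least $2$). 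This is essentially the content of the cited references, so what your approach buys is self-containedness at the cost of length. Two small points to tidy up: the final appeal to ``Ledger's classification'' is circular, since that classification is the statement being proved --- but it is also unnecessary, as ``irreducible of rank $\le 1$'' already means flat or a ROSS; and the De Rham reduction deserves a sentence explaining why a nontrivial Riemannian product also violates spectral constancy (compare $\theta$ tangent to one factor, where the other factor contributes a large kernel, with a mixed direction, e.g.\ on $S^2\times S^2$ the eigenvalue multisets $\{1,0,0\}$ and $\{\tfrac12,\tfrac12,0\}$ differ), so that irreducibility may indeed be assumed.
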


\begin{proof}
  If $M$ is LH and not of rank $1$, it is flat, cf. \cite{eschenburg_sym} or \cite{ledger_sym}.
  Since for every point in a locally symmetric space there is a neighbourhood which is isometric to a neighbourhood in a 
  symmetric space, we are done by the above lemma.
\end{proof}


\section{Blaschke Manifolds} \label{Blaschke}

The aim of this section is to provide the definition and some properties
of Blaschke manifolds, since we will show that 
compact simply connected LH-manifolds are of that type in the next section. 
Noteworthy are 
Propositions \ref{blaschkeinjrad} and \ref{blaschkesc} and the 
(global) basic commutativity (Theorem \ref{basiccom2}).

\subsection{Definition and Some Properties}

We do not present any proofs in this subsection and 
refer to \cite[Sections 5.D and 5.E]{besse_closed} for a detailed account.

\begin{defn}[spherical cut locus]
  We say that $M$ has \emph{spherical cut locus at $p\in M$} if 
  $S^d_{\irp}(p) = \Cut(p)$.
\end{defn}

\begin{defn}[Blaschke manifold]
  We say that $M$ is a \emph{Blaschke manifold} if $M$ is compact and has 
  spherical cut locus at every $p\in M$. 
\end{defn}

\begin{prop}[metric spheres are submanifolds]
  In a Blaschke manifold every metric sphere is a submanifold.
\end{prop}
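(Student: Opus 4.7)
The strategy is to split by the radius $R\geq 0$, after first recording that the Blaschke hypothesis forces $\irp$ to be constant and equal to $\diam$. Indeed, since $\Cut(p) = S^d_{\irp}(p)$, every minimizing geodesic from $p$ first meets the cut locus exactly at distance $\irp$, so no point of $M$ is further than $\irp$ from $p$; hence $\diam = \irp$ for every $p$, and $\ell := \ir$ is well-defined and equals $\diam$. The metric sphere $S^d_R(p)$ is then empty for $R>\ell$, the singleton $\{p\}$ for $R=0$, and nontrivial only for $0<R\leq \ell$.

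For $0<R<\ell$, the exponential map $\exp_p\colon B_\ell(0_p)\to M\setminus \Cut(p)$ is a diffeomorphism, and $S^d_R(p)$ is its image of the Euclidean sphere $S_R(0_p)\subset T_pM$, hence a smooth embedded hypersurface. The only serious case is $R=\ell$, where by the spherical cut locus hypothesis $S^d_\ell(p) = \Cut(p) = \exp_p(S_\ell(0_p))$. I would attack this case by a constant-rank argument applied to the smooth map $\exp_p|_{S_\ell(0_p)}\colon S_\ell(0_p) \to M$. The radial direction at any $v\in S_\ell(0_p)$ pushes forward to the unit geodesic velocity at time $\ell$, so $\ker (d\exp_p)_v \subseteq T_v S_\ell(0_p)$, and its dimension equals the conjugacy multiplicity of $p$ and $\exp_p(v)$. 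The Blaschke hypothesis symmetrises the situation: if $q\in \Cut(p)$ then $d(p,q)=\ell=\operatorname{injrad}(q)$, and the Blaschke hypothesis applied at $q$ gives $p\in \Cut(q)$ as well. Combined with the compactness of $S_\ell(0_p)$, this symmetry is enough to force the conjugacy multiplicity to be locally constant along $S_\ell(0_p)$. The constant rank theorem then locally identifies $\exp_p$ near any $v\in S_\ell(0_p)$ with a projection onto a coordinate subspace, so $\Cut(p)$ is locally a smooth submanifold of $M$, and global consistency (coming from $\exp_p|_{S_\ell(0_p)}\to\Cut(p)$ being a fibre bundle over $\Cut(p)$) yields the global submanifold structure.

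The main obstacle is precisely this last case. In a general compact Riemannian manifold the cut locus is only a stratified subset of $M$; the constancy of the conjugacy multiplicity along $S_\ell(0_p)$ is genuinely a Blaschke phenomenon, and pinning it down cleanly is what the argument really turns on. The rigorous route, following Besse (Section 5.E), upgrades $\exp_p|_{S_\ell(0_p)}\to \Cut(p)$ to a locally trivial smooth fibre bundle; this simultaneously establishes the constant rank and exhibits $\Cut(p)$ as an embedded submanifold of $M$.
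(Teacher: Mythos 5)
The paper gives no proof of this proposition: the opening of the subsection in Section \ref{Blaschke} explicitly states that all proofs there are omitted in favour of \cite[Sections 5.D and 5.E]{besse_closed}, so there is no internal argument to compare yours against. On its own terms, your reduction is correct as far as it goes: the constancy $\irp=\diam=:\ell$ (modulo the small extra step needed to see that the eccentricity function, hence $\irp$, is actually constant in $p$ rather than merely equal to the eccentricity at each $p$), the emptiness for $R>\ell$, the singleton for $R=0$, and the diffeomorphism argument for $0<R<\ell$ are all fine.

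The case $R=\ell$ is where the entire content of the proposition lives, and there your text is an outline rather than a proof, with two genuine gaps. First, the assertion that cut-locus symmetry plus compactness of $S_\ell(0_p)$ "forces the conjugacy multiplicity to be locally constant" is exactly the hard step and is not argued; semicontinuity of $\dim\ker(d\exp_p)_v$ comes for free in only one direction, and the actual constancy rests on Warner's regularity theorem for the first conjugate locus together with the Morse index theorem, not on a soft symmetry argument. Second, and more importantly, constant rank of $\exp_p|_{S_\ell(0_p)}$ cannot by itself deliver the conclusion, because the kernel can be trivial while the cut locus is still collapsed: for the round $\RP^n$ the cut points of $p$ are not conjugate to $p$ at all, so $\ker(d\exp_p)_v=0$ everywhere on $S_\ell(0_p)$, and $\exp_p|_{S_\ell(0_p)}$ is an immersion which is a $2$-fold covering onto $\Cut(p)\cong\RP^{n-1}$. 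The submanifold structure of $\Cut(p)$ therefore cannot be read off from the rank alone; one needs the full structure theorem that the fibres $\left(\exp_p|_{S_\ell(0_p)}\right)^{-1}(q)$ are great subspheres of $S_\ell(0_p)$ of constant dimension and that the restricted exponential map is a locally trivial fibration over its image. You name this step and defer it to Besse, but that fibration theorem \emph{is} the proposition, so the deferral concedes the whole statement --- which, in fairness, is precisely what the paper itself does.
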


\begin{prop}[$\ir = \diam$] \label{blaschkeinjrad}
  For a Blaschke manifold we have $\ir=\diam=d(p,q)=\irp$ 
  where $p\in M$ and $q\in \Cut(p)$.
\end{prop}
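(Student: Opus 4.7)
The plan is to chain the four equalities by proving that the injectivity radius function $p \mapsto \irp$ is in fact constant on $M$. The equality $d(p,q) = \irp$ for $q \in \Cut(p)$ is immediate from the defining identity $\Cut(p) = S^d_{\irp}(p)$ of the spherical cut locus. Moreover, compactness forces $\Cut(p) \neq \emptyset$ at every $p \in M$: otherwise $\exp_p\colon T_pM \to M$ would restrict to a diffeomorphism from the noncompact space $T_pM$ onto the compact space $M$, which is absurd. In particular $\irp < \infty$.

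Next, I would prove the global bound $d(p,q) \leq \irp$ for every pair $p, q \in M$. By Hopf-Rinow there is a unit-speed minimizing geodesic $\gamma$ from $p$ to $q$; this geodesic is minimizing precisely up to its cut value $c(\gamma'(0))$, and since the spherical cut locus condition puts the cut point $\gamma(c(\gamma'(0)))$ at distance $\irp$ from $p$, the cut value in every direction equals $\irp$. Hence $d(p,q) \leq c(\gamma'(0)) = \irp$. Taking the maximum over $q$, together with $\Cut(p) \neq \emptyset$, gives $\max_q d(p,q) = \irp$ for every $p \in M$. Consequently $\diam = \max_p \irp$, while the symmetric form of the bound $d(p,q) \leq \operatorname{injrad}(q)$ gives $\diam \leq \min_p \irp$.

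Combining these yields $\max_p \irp \leq \diam \leq \min_p \irp$, which forces $\irp$ to be constant on $M$; the common value is both $\ir$ and $\diam$, and by the first step it also coincides with $d(p,q)$ for every $q \in \Cut(p)$. The main obstacle I anticipate is the care needed in the inequality $d(p,q) \leq \irp$: one has to genuinely use the spherical cut locus hypothesis (not merely the existence of some cut point) to conclude that the cut value is $\irp$ in every direction emanating from $p$. Once this is established, the squeeze argument producing constancy of $\irp$ is essentially bookkeeping.
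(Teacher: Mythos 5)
Everything up to and including the identity $\max_{q\in M}d(p,q)=\irp$ for every $p$ is fine: your reduction of $d(p,q)\le\irp$ to the statement that the cut value equals $\irp$ in every direction uses the spherical cut locus hypothesis exactly where it must, and compactness does force $\Cut(p)\neq\emptyset$. The gap is in the final squeeze. From $d(p,q)\le\operatorname{injrad}(q)$ for all $p,q$ you may take the supremum over $p$ to get $\max_{p}d(p,q)\le\operatorname{injrad}(q)$, and then the supremum over $q$ yields only $\diam\le\max_{q}\operatorname{injrad}(q)$ --- the bound you already had --- not $\diam\le\min_{q}\operatorname{injrad}(q)$. To extract the latter you would need every point $q$ to be an endpoint of a diametral pair, i.e.~you would need the eccentricity $e(q):=\max_{p}d(p,q)$ to be constant; but since you have just shown $e(q)=\operatorname{injrad}(q)$, that is precisely the constancy of the injectivity radius you are trying to establish. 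The argument is circular at the one genuinely nontrivial point of the proposition. What your estimates do give is that $\irp=\diam$ at any point $p$ realizing the diameter, and, via the symmetry $q\in\Cut(p)\Leftrightarrow p\in\Cut(q)$, that $\operatorname{injrad}$ takes the same value at $p$ and at every point of $\Cut(p)$; neither statement propagates to all of $M$.

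The constancy of $p\mapsto\irp$ on a Blaschke manifold really does require the finer structure of the cut locus: one first shows that for $q\in\Cut(p)$ the set of initial directions of minimizing geodesics from $q$ back to $p$ is a great subsphere of the unit sphere in $T_qM$, hence is invariant under $v\mapsto -v$, so that every geodesic issuing from $p$ closes up after time $2\irp$; from this the common value of the injectivity radius can be spread to all of $M$. This is the content of \cite[Sections 5.D and 5.E]{besse_closed}, which is exactly where the paper defers --- it states this proposition without proof --- so there is no shortcut in the text that your squeeze could be matched against. If you want a self-contained argument you will need to prove the great-subsphere lemma (or an equivalent, such as the closedness of all geodesics) rather than rely on the symmetric form of the distance bound.
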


\begin{prop}[simple and closed geodesics] \label{blaschkesc}
  In a Blaschke manifold every geodesic is simple and closed with length $2\diam$.
\end{prop}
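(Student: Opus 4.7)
The plan is to exploit that, by Proposition~\ref{blaschkeinjrad}, $\irp(x)=\diam=:D$ at every $x\in M$, so $\exp_x\colon B_D(0_x)\to M\setminus\Cut(x)$ is a diffeomorphism and the cut time along any unit direction from any base-point equals exactly $D$. I fix a unit-speed geodesic $\gamma$ and set $p:=\gamma(0)$, $q:=\gamma(D)$. Then $q\in\Cut(p)$ and, by the symmetry of the cut-locus relation, $p\in\Cut(q)$.

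Simplicity of $\gamma|_{[0,2D)}$ I would establish by a two-chart argument. On $[0,D)$ the geodesic equals $t\mapsto\exp_p(t\gamma'(0))$, injective because $\exp_p$ is injective on $B_D(0_p)$. Re-centring at $q$, the identity $\gamma(D+s)=\exp_q(s\gamma'(D))$ for $s\in(-D,D)$ realises $\gamma|_{(0,2D)}$ as the injective image of an open ball in $T_qM$ under $\exp_q$. These two injective pieces overlap on the connected interval $(0,D)$ and glue to injectivity on $[0,2D)$; the only subtlety is the potential coincidence $\gamma(0)=\gamma(t_2)$ for some $t_2\in[D,2D)$, which is ruled out because $p\in\Cut(q)$ while $\gamma(t_2)\in\exp_q(B_D(0_q))=M\setminus\Cut(q)$.

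For closedness the heart of the matter is the base-point identity $\gamma(2D)=p$; the velocity identity $\gamma'(2D)=\gamma'(0)$ will then follow from smoothness of $\gamma$ together with the simplicity just shown, since any other unit continuation at $p$ would immediately re-enter $\gamma((0,D))$ on an arbitrarily small forward interval. Writing $v:=\gamma'(D)$, reversing $\gamma|_{[0,D]}$ identifies $p$ with $\exp_q(-Dv)$, and by definition $\gamma(2D)=\exp_q(Dv)$; since the cut time from $q$ in both directions $\pm v$ equals $D$, both points lie on $\Cut(q)=S^d_D(q)$.

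The real content of the proposition is therefore the antipodal identification $\exp_q(Dv)=\exp_q(-Dv)$, and this is the main obstacle. My intended strategy is to deduce this identification from the rigid global structure of Blaschke manifolds via a boundary-behaviour analysis of the diffeomorphism $\exp_q\colon B_D(0_q)\to M\setminus\Cut(q)$ as one approaches the submanifold $\Cut(q)=S^d_D(q)$ along the two antipodal radial rays $\{sv\colon s\in[0,D)\}$ and $\{-sv\colon s\in[0,D)\}$, combined with a connectedness argument on the unit tangent bundle $UM$ which propagates the antipodal identity once it is verified at a single pair $(x,w)$. Verifying the identity at such a seed point, and controlling the propagation past the conjugate locus where $\exp_q$ fails to be locally invertible, is the delicate part I anticipate.
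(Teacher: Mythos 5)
First, a remark on the comparison itself: the paper does not prove this proposition at all --- the subsection opens by saying that no proofs are given there and refers to \cite[Sections 5.D and 5.E]{besse_closed}. So your attempt has to be measured against the standard argument in Besse rather than against anything in the text.

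Your reductions are fine as far as they go: the two-chart injectivity argument for $\gamma|_{[0,2D)}$ is correct (including the case $t_1=0$, which you rightly dispose of via $p\in\Cut(q)$), and you correctly isolate that everything hinges on the geodesic closing up smoothly at time $2D$. But that closing-up is the entire content of the proposition, and your proposal does not prove it. Two concrete problems. First, the velocity matching does not follow from simplicity as you claim: if $\gamma(2D)=p$ but $\gamma'(2D)=w\neq\gamma'(0)$, then for small $s>0$ the point $\gamma(2D+s)=\exp_p(sw)$ lies in $\hat B_D(p)$ but \emph{not} on $\gamma(\left]0,D\right[)=\exp_p(\left]0,D\right[\,\gamma'(0))$, by injectivity of $\exp_p$ on $B_D(0_p)$; so nothing ``re-enters'' the earlier arc and no contradiction arises --- the curve would simply be a geodesic loop with a corner at $p$ rather than a closed geodesic. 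Second, the proposed open--closed propagation on the unit tangent bundle is not a proof: the set of $(x,w)$ satisfying the antipodal identity is closed by continuity, but openness is exactly where the difficulty sits (the identity must be verified \emph{on} the cut locus, where $\exp_x$ degenerates along the conjugate directions), and there is no distinguished ``seed point'' in a general Blaschke manifold at which the identity holds for free. The missing ingredient is the structure theory of the cut locus of a Blaschke manifold, namely that for $q\in\Cut(p)$ the set $\Lambda(q,p)\subset S_1(0_q)$ of initial directions of minimizing geodesics from $q$ to $p$ is a great subsphere of $S_1(0_q)$ (so in particular invariant under $v\mapsto -v$, which yields $\gamma(2D)=p$, and whose analysis also delivers the velocity matching). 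That great-sphere lemma is a genuine theorem --- it is the substance of \cite[Section 5.E]{besse_closed} --- and without it, or an equivalent analysis of the fibration $\exp_p\colon S_D(0_p)\to\Cut(p)$, the proof is not complete.
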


\begin{prop}[special case: singleton cut locus] \label{blaschkespecial}
  Let $M$ be a Blaschke manifold and $p\in M$. Assume that the cut locus consists 
  of only one element, i.e.~$\Cut(p)=\{q_p\}$. 
  Then the following statements hold.
  \begin{enumerate}
    \item $M$ is diffeomorphic to the sphere $S^n$.
    \item The map 
      $$\sigma:M\to M, \;\; p\mapsto q_p$$
      is an involutive isometry.
    \item The Riemannian quotient $\overline M:=M/\sigma$ is Blaschkean and diffeomorphic to $\RP^n$.
    \item The natural projection map $\pi:M\to \overline M$
      is the universal Riemannian covering of $\overline M$.
  \end{enumerate}
\end{prop}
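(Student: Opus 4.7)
The plan is to exploit the key consequence of Proposition \ref{blaschkesc} combined with the spherical cut locus: every geodesic of $M$ is simple, closed, and of length $2\ir$. Hence the geodesic from $p$ in any direction $\theta \in S_1(0_p)$ reaches $q_p$ at time $\ir$ and returns to $p$ at time $2\ir$. Reversing this picture, all geodesics from $q_p$ end at $p$ at time $\ir$, so the Blaschke hypothesis at $q_p$ gives $\Cut(q_p) = \{p\}$. By analogous symmetry every $r \in M$ has singleton cut locus, so the assignment $\sigma(r) := q_r$ is globally well-defined.

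For (1), the map $\exp_p : B_{\ir}(0_p) \to M \setminus \{q_p\}$ is a diffeomorphism by definition of $\ir$, and analogously for $q_p$. I would build a diffeomorphism $\Phi : M \to S^n$ (with $S^n$ rescaled to diameter $\ir$) by fixing linear isometries between $T_pM$, $T_{q_p}M$ and the tangent spaces at the north and south poles of $S^n$, and defining $\Phi$ piecewise via the exponential maps on both manifolds. Agreement on the overlap and smoothness at the apex points reduce to the fact that the closed-geodesic structure in $M$ matches that in $S^n$, both of period $2\ir$.

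For (2), smoothness of $\sigma$ is immediate from the intrinsic description: along any closed geodesic $\gamma$ of period $2\ir$, $\sigma$ acts by $\gamma(s) \mapsto \gamma(s+\ir)$. This exhibits $\sigma$ as the composition of the time-$\ir$ geodesic flow with the base projection, and $\sigma^2 = \id$ follows from $\gamma(s+2\ir) = \gamma(s)$. For the isometry property I would verify distance preservation directly: given $r_1 \neq r_2$ with $\delta := d(r_1, r_2) < \ir$, extend the unique minimising geodesic from $r_1$ to $r_2$ to a closed geodesic $\eta$ of period $2\ir$. Since $\sigma$ acts on $\eta$ as translation by $\ir$ regardless of base point, $\sigma(r_1)$ and $\sigma(r_2)$ lie on $\eta$ at distance $\delta$ apart, giving $d(\sigma(r_1),\sigma(r_2)) \le \delta$; applying $\sigma$ once more gives the reverse inequality. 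Hence $\sigma$ is a distance-preserving smooth bijection and therefore a Riemannian isometry.

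For (3) and (4), $\sigma$ is fixed-point-free (else $\ir = d(r,q_r) = 0$), so $\overline M := M/\sigma$ carries the unique smooth Riemannian structure making $\pi$ a Riemannian double cover. Computing via lifts, $\Cut_{\overline M}(\overline r)$ equals the projection of the metric sphere $\{\gamma_\theta(\ir/2) : \theta \in S_1(0_r)\}$ (the common midpoints of the geodesic arcs from $r$ to $q_r$), which is a metric sphere of radius $\ir/2 = \diam(\overline M)$ in $\overline M$, giving the Blaschke property. The free smooth involution $\sigma$ on $M \cong S^n$ produces $\overline M \cong \RP^n$, and simple connectedness of $M$ (for $n \ge 2$) makes $\pi$ the universal Riemannian cover. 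The principal obstacle I anticipate is (1): upgrading the easy topological identification $M \cong S^n$ (via one-point compactification of $M \setminus \{q_p\} \cong \R^n$) to a smooth diffeomorphism, which requires analysing the transition function between the two exponential charts and matching it with the analogous transition on $S^n$.
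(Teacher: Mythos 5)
The paper itself gives no proof of this proposition --- the subsection explicitly states that no proofs are presented and refers to Besse, Sections 5.D and 5.E --- so your attempt can only be measured on its own terms. It has the right skeleton, but two genuine gaps.

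First, the global well-definedness of $\sigma$. You need $\Cut(r)$ to be a singleton for \emph{every} $r\in M$, while the hypothesis gives it only at $p$; the symmetry you actually establish is only between $p$ and $q_p$, and ``by analogous symmetry'' for a general $r$ is an assertion, not an argument. What closes it is the observation that every point of $M$ lies on a minimising geodesic from $p$ to $q_p$ (all geodesics from $p$ minimise up to time $\ir$ and end at $q_p$), so that $d(p,s)+d(s,q_p)=\ir$ for all $s\in M$. Then for $r$ with $a:=d(p,r)<\ir$ and any $s$ with $d(r,s)=\ir$, the triangle inequalities force $d(p,s)=\ir-a$, $d(q_p,s)=a$ and $d(r,s)=d(r,p)+d(p,s)$, so $s$ is the point at parameter $\ir$ on the unique geodesic from $r$ through $p$; hence $\Cut(r)=S^d_{\ir}(r)$ is a single point. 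Elementary, but it cannot be skipped, since everything else (including your isometry argument, which translates along $\eta$ ``regardless of base point'') rests on it.

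Second, and more seriously, parts (1) and (3). Gluing the two exponential charts $\exp_p$ and $\exp_{q_p}$ exhibits $M$ as two open $n$-discs glued along a diffeomorphism of (a collar of) $S^{n-1}$; by itself this only shows $M$ is a twisted sphere, i.e.\ \emph{homeomorphic} to $S^n$ --- ``the closed-geodesic structure matches that of $S^n$'' does not exclude an exotic clutching map, and likewise the quotient of $S^n$ by an arbitrary free smooth involution need not be diffeomorphic to $\RP^n$ (fake projective spaces exist). The repair is to reverse your order: your distance-rigidity proof of (2) nowhere uses (1), so establish the isometry $\sigma$ first. Since isometries commute with the exponential map, $\sigma(\exp_p(t\theta))=\gamma_\theta(t+\ir)$ gives $\gamma_\theta'(\ir)=d\sigma_p(\theta)$, so the transition between the two charts is $t\theta\mapsto(\ir-t)\bigl(-d\sigma_p(\theta)\bigr)$ with $-d\sigma_p$ a \emph{linear} isometry. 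This is exactly the standard two-chart atlas of the round sphere of diameter $\ir$, so $M$ is diffeomorphic to $S^n$, and under this identification $\sigma$ becomes the antipodal map, which yields $\overline M\cong\RP^n$. With that reordering your sketches of (3) and (4) (fixed-point freeness, injectivity of $\pi$ on $B_{\ir/2}(r)$, spherical cut loci of radius $\ir/2$ in the quotient, simple connectedness of $M$ for $n\ge 2$) go through.
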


\begin{rem}
  Actually, $M$ is isometric to the sphere in this case, cf. \cite[Theorem D.1]{besse_closed}.
\end{rem}

\subsection{Radial and Averaged Functions}

Let $M$ be a Blaschke manifold and set $D:=\diam$.
The pieces of notation we define in this subsection are used in the following 
argumentation in the context of a Blaschke manifold only.
Note that the definitions given here coincide with the ones given earlier
on pointed open geodesic balls. Anyway, the results provided here are mostly only
true for Blaschke manifolds, cf. \cite[Section 1]{szabo_main}.

\begin{defn}[(associated) radial function]
  For a smooth function $F:[0,D]\to \R$ we define
  the \emph{(associated) radial function (around $p\in M$)} by 
  $$R_pF:M\to \R,\; q\mapsto F(d(p,q)).$$
  We call $R_p:C^\infty([0,D])\to C^0(M)\cap C^\infty(\hat B_D(p))$
  \emph{radial operator}.
  Functions $f\in C^\infty(M)$ such that an $F:[0,D]\to\R$ exists
  with $f=R_pF$ are called \emph{radially symmetric functions (around $p$)} 
  or abbreviated \emph{radial functions (around $p$)}.
\end{defn}

\begin{rem}
  The radial operator is linear. We emphasise that the function $R_pF$ 
  is not necessarily differentiable in $p$ nor in points of $\Cut(p)$.
\end{rem}

\begin{lem}[criterium for smoothness] \label{smoothcrit}
  Let $F:[0,D]\to\R$ be a smooth function. For every $p\in M$ the following two statements are
  equivalent.
  \begin{enumerate}
    \item $R_pF$ is of class $C^{2m}$.
    \item $F^{(2i-1)}(0)=F^{(2i-1)}(D)=0$ holds for $i=1,\dots,m$.
  \end{enumerate}
\end{lem}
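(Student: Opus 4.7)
The plan is to split $M$ into the three regions $M\setminus(\{p\}\cup\Cut(p))$, a neighbourhood of $p$, and a neighbourhood of $\Cut(p)$, and handle each separately. On the first region $d(p,\cdot)$ is $C^\infty$, so $R_pF$ is automatically $C^\infty$ there; the entire content lies near $p$ and near $\Cut(p)$.

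For the necessity direction I would use a single closed geodesic through $p$. By Proposition \ref{blaschkesc} a unit-speed geodesic $\gamma$ from $p$ is simple and closed of length $2D$, where $D := \diam$, with $d(p,\gamma(t)) = t$ on $[0,D]$ and $d(p,\gamma(t)) = 2D - t$ on $[D,2D]$. Hence $R_pF \circ \gamma$ is the $2D$-periodic even reflection of $F|_{[0,D]}$ about $0$ and $D$. If $R_pF$ is $C^{2m}$ on $M$, this composition is $C^{2m}$ on $\R / 2D\Z$, and the standard one-variable reflection argument at $t=0$ (respectively $t=D$) forces $F^{(2i-1)}(0) = 0$ (respectively $F^{(2i-1)}(D) = 0$) for $i = 1,\dots,m$.

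For sufficiency, at $p$ I would pass to normal coordinates: $R_pF \circ \exp_p$ is $v \mapsto F(\norm{v})$ on $B_D(0_p)$, and the classical radial-function criterion on $\R^n$---namely that $\norm{v}^{2k}$ is smooth while $\norm{v}^{2k+1}$ is only of class $C^{2k}$ at the origin---gives that this is $C^{2m}$ at $0_p$ exactly when $F^{(2i-1)}(0) = 0$ for $i=1,\dots,m$. At a point of $\Cut(p)$ I would exploit the identity
\begin{equation*}
  d(q',\Cut(p)) = D - d(p,q'), \qquad q' \in M,
\end{equation*}
which holds in any Blaschke manifold: the inequality $\geq$ is the triangle inequality combined with $d(p,\Cut(p)) = D$, while $\leq$ follows by extending the minimising geodesic from $p$ to $q'$ up to arclength $D$, at which time it hits $\Cut(p)$. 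Since $\Cut(p) = S^d_D(p)$ is a submanifold of $M$, its tubular neighbourhood is diffeomorphic via the normal exponential to a neighbourhood of the zero section of its normal bundle, and under this diffeomorphism $d(\cdot,\Cut(p))$ becomes the fibrewise norm. Setting $G(s) := F(D-s)$ one has $R_pF = G \circ d(\cdot,\Cut(p))$, and the radial-function criterion applied fibrewise yields $C^{2m}$ regularity across $\Cut(p)$ iff $G^{(2i-1)}(0) = 0$, which is exactly $F^{(2i-1)}(D) = 0$ for $i = 1,\dots,m$.

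The main obstacle is the analysis at $\Cut(p)$: proving the identity $d(\cdot,\Cut(p)) = D - d(p,\cdot)$ from the Blaschke structure, and then transferring the flat radial-function criterion from $\R^n$ to the fibrewise norm of the normal bundle of the (possibly positive-dimensional) submanifold $\Cut(p)$. The latter is a parameter-dependent version of the flat case; compactness of $\Cut(p)$ reduces it to finitely many local trivialisations in which the Taylor-expansion argument of the flat model still produces and absorbs the singular odd powers.
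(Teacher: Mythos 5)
Your proposal is correct and follows essentially the same route as the paper: away from $p$ and $\Cut(p)$ the statement is trivial, at $p$ one invokes the flat radial criterion in normal coordinates, and at the cut locus one uses adapted coordinates in which $R_pF$ becomes a radial function of the normal directions (the paper builds exactly such a chart by hand using that geodesics from $p$ meet the submanifold $\Cut(p)$ orthogonally, which is your normal-exponential tubular neighbourhood together with the identity $d(\cdot,\Cut(p))=D-d(p,\cdot)$). You additionally spell out the necessity direction via restriction to a closed geodesic, which the paper dismisses as clear; both of your added details are sound.
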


\begin{proof}
  $\emph{1.} \Rightarrow \emph{2.:}$
  This is clear.

  $\emph{2.} \Rightarrow \emph{1.:}$
  Set $k:=\dim \Cut(p)$ and write $\R^n=\R^{n-k}\times \R^k$.
  The function $R_pF$ is certainly of class $C^{2m}$ in $B_D(p)$.
  So pick a point $q\in \Cut(p)$. Since $\Cut(p)$ is a submanifold and 
  geodesics emanating from $p$ hit the cut locus $\Cut(p)$ 
  orthogonally, we can find a chart $(\varphi,U)$ around $q$
  such that 
  \begin{enumerate}
    \item $\varphi(q)=0$,
    \item $\varphi:U\cap \Cut(p) \longrightarrow \{0\}\times \R^k\cap \varphi(U)$ is a diffeomorphism,
    \item $\varphi:U\setminus \Cut(p) \longrightarrow \R^{n-k}\times \{0\}\cap \varphi(U)$ is a diffeomorphism and
    \item For every geodesic $\gamma:\R\to M$ through $p$ and $\overline q\in \Cut(p)$ the set
      $\varphi(\gamma(\R)\cap U)$ is a line through $\varphi(\overline q)$ which is orthogonal
      to $\{0\}\times \R^k$.
  \end{enumerate}
   The function $R_pF \circ \varphi^{-1}$ is therefore of class $C^{2m}$ since its 
   partial derivatives of order $2m$ exist and are continuous.
\end{proof}

\begin{defn}[average operator]
  Let $f:M\to \R$ be a smooth function. The \emph{averaged function $A_pf$ of $f$ (around $p\in M$)} 
  is defined by
  $$A_pf:[0,D]\to \R,\;\;r\mapsto (A_pf)(r):=\lim_{\rho\to r} \left(A_p\left(f|_{\hat B_D(p)}\right)\right)(\rho).$$
  We call $A_p:C^\infty(M)\to C^\infty([0,D])$ \emph{average operator (around $p$)}.
\end{defn}

\begin{rem}
  The average operator is linear and we have $(A_pf)(0)=f(p)$.
 If we lift 
 $f|_{\Cut(p)}$ to a function $\tilde f := f\circ \exp_p$ on $S_D(0_p)$, we easily see that the average of 
 $f$ taken over the cut locus of $p$ equals the average of $\tilde f$ taken over $S_D(0_p)$. 
 So the limit equals the actual average, i.e.
 $$(A_pf)(D)= \frac{1}{\vol(\Cut(p))}\int_{\Cut(p)} f|_{\Cut(p)} \;d\Cut(p).$$
\end{rem}

\begin{lem}[properties of the radial operator]
  Let $h:M\to \R$ and $F,G:[0,D]\to \R$ be smooth and $p\in M$.
  \begin{enumerate}
    \item $A_pR_p F = F$
    \item $R_p (FG) = R_pF R_pG$
    \item $A_p(hR_pG)=GA_ph$
  \end{enumerate}
\end{lem}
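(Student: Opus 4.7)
The plan is to reduce each of the three claims to the analogous identity on the open set $\hat B_D(p)$ (which is essentially the content of the earlier lemma on pointed balls) and then argue that the identities extend to the endpoints $r=0$ and $r=D$ by the limit definition of $A_p$ together with the remark identifying the value at $D$ with the honest average over $\Cut(p)$.

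First, for (1) I would observe that on any sphere $S_r(p)$ with $0<r<D$, the function $R_pF$ is by definition constant with value $F(r)$, so $(A_p(R_pF|_{\hat B_D(p)}))(r)=F(r)$ follows directly from the definition of the average. At the endpoint $r=0$ the value is $(A_pR_pF)(0)=R_pF(p)=F(0)$ by the remark immediately following the definition of $A_p$, and at the endpoint $r=D$ the same remark identifies $(A_pR_pF)(D)$ with $\frac{1}{\vol(\Cut(p))}\int_{\Cut(p)} R_pF|_{\Cut(p)}\,d\Cut(p)=F(D)$, since $R_pF$ equals $F(D)$ identically on $\Cut(p)$.

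Second, (2) is purely pointwise: for every $q\in M$ one has
$$R_p(FG)(q)=(FG)(d(p,q))=F(d(p,q))\,G(d(p,q))=(R_pF)(q)\,(R_pG)(q),$$
so there is nothing to prove beyond unpacking the definitions.

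For (3) the key observation is again that $R_pG$ is constant on each sphere $S_r(p)$ for $0\le r\le D$, with value $G(r)$. Hence for $0<r<D$ the factor $G(r)$ can be pulled out of the integral defining $A_p(hR_pG)(r)$, yielding $G(r)\,(A_ph)(r)$; at $r=0$ both sides equal $h(p)G(0)$, and at $r=D$ the remark on the cut locus gives $(A_p(hR_pG))(D)=\frac{1}{\vol(\Cut(p))}\int_{\Cut(p)}h\cdot G(D)\,d\Cut(p)=G(D)\,(A_ph)(D)$. I do not expect any serious obstacle: the only subtle point is the handling of the boundary values $r=0$ and $r=D$, and this is taken care of by the limit definition of $A_p$ together with the explicit identification of $(A_pf)(D)$ as a genuine mean over the cut locus.
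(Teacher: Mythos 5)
Your proof is correct. The paper in fact gives no proof of this lemma at all (its pointed-ball analogue is dismissed with ``the first three statements are clear''), and your argument --- constancy of $R_pF$ and $R_pG$ on each sphere $S_r(p)$ for $0<r<D$, combined with the limit definition of $A_p$ and the remark identifying $(A_pf)(D)$ with the genuine average over $\Cut(p)$ for the endpoints --- is precisely the routine verification the author intended to leave to the reader.
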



%
%
%


%

\begin{lem}[(global) basic commutativity] \label{basiccom2}
  Let $M$ be a locally harmonic Blaschke manifold and $p\in M$. For every smooth function $f$
  on $M$ the function $R_pA_pf:M\to\R$ is of class $C^2$ and it holds
  $$\Delta R_pA_pf = R_pA_p\Delta f.$$
\end{lem}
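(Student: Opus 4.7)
The strategy combines (a) upgrading the local basic commutativity of Proposition \ref{equivalences}(6.)~to the full pointed open ball $\hat B_D(p)$, (b) checking the smoothness criterion of Lemma \ref{smoothcrit} at $p$ and at $\Cut(p)$, and (c) a density argument to pass the identity through the singular set $\{p\} \cup \Cut(p)$.

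For (a), Corollary \ref{analyticcor} together with the remark following it ensures that $\den_p$ is radial on $\hat B_{\irp}(p)$, which by Proposition \ref{blaschkeinjrad} equals $\hat B_D(p)$. Running the proof of Proposition \ref{equivalences} verbatim with $\eps$ replaced by $D$ therefore yields $\Delta R_p A_p f = R_p A_p \Delta f$ on $\hat B_D(p) = M \setminus (\{p\} \cup \Cut(p))$. The same radiality also lets one rewrite
$$(A_p f)(r) = \frac{1}{\volS}\int_{S_1(0_p)} f(\exp_p(r\theta))\,d\theta, \qquad r \in [0, D],$$
exactly as in the step $\emph{2.} \Rightarrow \emph{3.}$ of that proposition. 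The right-hand side extends to a smooth function $\tilde A_p f \colon \R \to \R$, since $t \mapsto \exp_p(t\theta)$ is defined on all of $\R$.

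For (b), Lemma \ref{smoothcrit} with $m=1$ reduces $C^2$-regularity of $R_p A_p f$ to the two conditions $(A_p f)'(0) = 0$ and $(A_p f)'(D) = 0$. The substitution $\theta \mapsto -\theta$ in the integral gives $\tilde A_p f(-r) = \tilde A_p f(r)$, so differentiating at $r = 0$ settles the first condition. For the second I would invoke Proposition \ref{blaschkesc}: each geodesic $\gamma_\theta(t) := \exp_p(t\theta)$ is closed of period $2D$, hence
$$\gamma_\theta(D+s) = \gamma_\theta(s-D) = \gamma_{-\theta}(D-s).$$
Substituting $-\theta$ for $\theta$ in the integral for $\tilde A_p f(D+s)$ yields $\tilde A_p f(D+s) = \tilde A_p f(D-s)$, and differentiating at $s = 0$ gives $(A_p f)'(D) = 0$.

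Finally, for (c), once $R_p A_p f \in C^2(M)$ is established, $\Delta R_p A_p f$ is continuous on $M$; and $R_p A_p \Delta f$ is continuous on $M$ because $\Delta f$ is smooth and $R_p$ maps $C^\infty([0,D])$ into $C^0(M)$. The two sides agree on the dense open set $\hat B_D(p)$ by (a), so they coincide on all of $M$. I expect the delicate step to be the regularity at $\Cut(p)$: this is where the Blaschke hypothesis is used essentially, through the closed-geodesic symmetry that identifies $\tilde A_p f(D+s)$ with $\tilde A_p f(D-s)$.
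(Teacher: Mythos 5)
Your proof is correct, but the key step -- establishing $(A_pf)'(0)=(A_pf)'(D)=0$ so that Lemma \ref{smoothcrit} applies -- is handled by a genuinely different argument than the paper's. The paper uses Green's first identity to write
$$(A_pf)'(r)=-\frac{1}{\vol(S_r(p))}\int_{B_r(p)}\Delta f\,dB_r(p),$$
bounds this by $r\,\max|\Delta f|$ to get the limit $0$ at $r\to 0$, and then uses $\int_M\Delta f\,dM=0$ to rewrite the same quantity as an integral over $M\setminus B_r(p)$ and bound it by $(D-r)\max|\Delta f|$ to get the limit $0$ at $r\to D$. You instead extend $A_pf$ to the smooth function $\tilde A_pf(r)=\frac{1}{\volS}\int_{S_1(0_p)}f(\exp_p(r\theta))\,d\theta$ on all of $\R$ (legitimate once radiality of $\den_p$ on the full ball is secured via Corollary \ref{analyticcor}) and read off the vanishing of the odd derivatives from parity: evenness about $0$ by the substitution $\theta\mapsto-\theta$, and evenness about $D$ from the closed-geodesic identity $\gamma_\theta(D+s)=\gamma_{-\theta}(D-s)$ of Proposition \ref{blaschkesc}. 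Your symmetry argument is cleaner and in fact yields $F^{(2i-1)}(0)=F^{(2i-1)}(D)=0$ for \emph{all} $i$, i.e.~it proves the stronger statement (mentioned only as a remark in the paper) that $R_pA_pf$ is $C^\infty$; the paper's Green's-identity route is more robust in that the estimate near $r=0$ needs nothing beyond local harmonicity, whereas your argument at $r=D$ leans essentially on the Blaschke closed-geodesic structure -- which is available here, so both are valid. Your part (c), extending the identity from the dense open set $\hat B_D(p)$ by continuity of both sides, is exactly what the paper's opening sentence tacitly does.
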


\begin{proof}
  Since the equality holds on $\hat B_D(p)$, we only need to prove the first claim.

  By Lemma \ref{smoothcrit} we only need to show that $(A_pf)'(0)=(A_pf)'(D)=0$. 
  Let $\Omega:\left]0,D\right[\to \R$ be the function with $R_p\Omega = \den_p$. For $0<r<D$ we have by Green's first identity
  \begin{align*}
    (A_pf)'(r) &= -\frac{1}{\vol(S_r(p))}\int_{B_r(p)} \Delta f \;dB_r(p) \\
    &= -\frac{1}{\int_{S_1(0_p)} r^{n-1}\Omega(r) \;d\theta}\int_0^r\int_{S_1(0_p)} (\Delta f)(\exp_p\rho\theta)\rho^{n-1}\Omega(\rho) \;d\theta d\rho.
  \end{align*}
  Hence
  $$|(A_pf)'(r)| \le r\left|\frac{\max_{0\le\rho\le r} \left(\rho^{n-1} \Omega(\rho) \max_{\theta\in S_1(0_p)} (\Delta f)(\exp_p\rho\theta)\right)}{r^{n-1}\Omega(r)}\right| \le r\left|\max_{q\in B_r(p)} (\Delta f)(q)\right|$$
  and
  $$\lim_{r\to 0}(A_pf)'(r) = 0.$$
  Because of 
  $$0=\int_M \Delta f \;dM=\int_0^D\int_{S_1(0_p)} (\Delta f)(\exp_p\rho\theta)\rho^{n-1}\Omega(\rho) \;d\theta d\rho$$ 
  we get for $0<r<D$
  \begin{align*}
    |(A_pf)'(r)| &= \left|\frac{1}{\int_{S_1(0_p)} r^{n-1}\Omega(r) \;d\theta} \int_M \Delta f \;dM + (A_pf)'(r)\right| \\
    &= \left|\frac{1}{\int_{S_1(0_p)} r^{n-1}\Omega(r) \;d\theta} \int_r^D\int_{S_1(0_p)} (\Delta f)(\exp_p\rho\theta)\rho^{n-1}\Omega(\rho) \;d\theta d\rho \right| \\
    &\le (D-r)\left|\max_{q\in M\setminus B_r(p)} (\Delta f)(q)\right|.
  \end{align*}
  This proves the claim because
  $$\lim_{r\to D}(A_pf)'(r) = 0.$$
\end{proof}

\begin{rem}
  $R_pA_pf$ is actually smooth, but this fact is not needed below.
\end{rem}

\section{Other Notions of Harmonicity}

There are two more kinds of harmonicity which are of interest for 
our considerations. In this section we give the definitions for 
globally harmonic and strongly harmonic manifolds as well as 
topological conditions which force LH-manifolds to be globally 
respectively strongly harmonic. Noteworthy are Allamigeon's theorem 
(Theorem \ref{Allamigeon}) and Proposition \ref{LHimplSH}.

\subsection{Globally Harmonic Manifolds}

The most important result of global nature for LH-manifolds is Allamigeon's 
theorem, cf. \cite[Theorem 6.82]{besse_closed}, which allows us to use the statements of the previous section.

\begin{defn}[globally harmonic]
  A complete Riemannian manifold $M$ is said to be \emph{globally harmonic} if for 
  every $p\in M$ there exists $\Omega: \Rpl\to \R$ such that
  $$\forall\; v\in T_pM:\; \den(p,v) = \Omega(\n{v})$$
\end{defn}

\begin{rem}
  Notice that the choice of $\Omega$ could depend on $p$. Actually, it
  does not by Proposition \ref{denpointprop}.
  The property `globally harmonic' is often abbreviated by `GH'. A manifold which is
  GH is often called GH-manifold.
\end{rem}

\begin{prop}[LH-manifolds are GH] \label{LHimplGH}
  Every complete LH-manifold $M$ is GH.
\end{prop}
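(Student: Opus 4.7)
The plan is to derive the claim essentially as a corollary of Proposition \ref{denpointprop}, with completeness playing the role of ensuring that $V = TM$. Recall that $V \subset TM$ denotes the maximal subset on which the exponential map is defined; by Hopf–Rinow, for a complete Riemannian manifold one has $\exp_p \colon T_pM \to M$ defined on the entire tangent space at every $p\in M$, and hence $V = TM$.

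Once this identification is in place, Proposition \ref{denpointprop} can be applied to $M$ (since $M$ is LH by hypothesis) to produce a single function $\Omega \colon [0,\infty[ \to \R$, independent of the base point, with $\den(p,v) = \Omega(\n{v})$ for every $(p,v) \in V$. Because $V = TM$, this identity holds for every $(p,v) \in TM$, which is literally the definition of a GH-manifold. In particular, the choice of $\Omega$ does not depend on the point $p$, as remarked after the definition of GH.

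The substantive content of the argument is concentrated entirely in Proposition \ref{denpointprop}, whose proof in turn relied on the invariance of $\den$ under the geodesic involution (Lemma \ref{deninvlem}) together with the real analyticity of $\den$ on the interior of its domain (Corollary \ref{analyticcor}). Consequently, I do not expect any genuine obstacle in the present statement: the only non-cosmetic step is invoking Hopf–Rinow to upgrade the local/maximal-domain version of point independence to the full tangent bundle version required by the GH definition.
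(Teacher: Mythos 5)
Your argument is correct, but it takes a different route from the paper's. The paper's own proof is a per-point analyticity argument and never invokes point-independence of the density: by completeness $\den(p,\cdot)$ is an analytic function on all of $T_pM$ (Corollary \ref{analyticcor} plus Hopf--Rinow giving $V=TM$), it is radial near $0_p$ by local harmonicity, hence radial on all of $T_pM$ by the identity theorem for real analytic functions --- and that is already the definition of GH, since the definition allows $\Omega$ to depend on $p$. You instead reduce everything to Proposition \ref{denpointprop}, which is logically fine (it precedes this statement and its proof does not use global harmonicity), and it buys you the stronger conclusion that a single $\Omega$ works for every base point --- a fact the paper only records in the remark following the GH definition. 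The cost is that you import the full geodesic-involution machinery of Lemma \ref{deninvlem} where the paper needs only analyticity; note also that Proposition \ref{denpointprop} itself quietly relies on the same analytic-continuation step (radiality of $\den_p$ up to $\operatorname{injrad}(p)$ rather than just on a small $\eps$-ball) that constitutes the paper's entire proof here, so your route does not actually avoid that ingredient, it merely packages it differently.
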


\begin{proof}
  Let $p\in M$. The density function $\den(p,\cdot)$ is an analytic function 
  $T_pM\to \R$. Since it is radially symmetric in a neighbourhood around $0_p$, 
  it is radially symmetric on the whole of $T_pM$.
\end{proof}

\begin{thm}[Allamigeon's theorem] \label{Allamigeon}
  Every complete simply connected LH-mani\-fold $M$
  is either a Blaschke manifold or diffeomorphic to $\R^n$.
\end{thm}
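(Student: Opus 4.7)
The plan is to dichotomise according to the behaviour of the radial profile $\Omega : [0,\infty[ \to \R$ of the density function, which by Proposition \ref{denpointprop} is well-defined, and by Corollary \ref{analyticcor} is real analytic with $\Omega(0)=1$. Its zero set in $\left]0,\infty\right[$ is therefore either empty or a discrete subset with a smallest element. Since $\exp_p v$ is conjugate to $p$ along the radial geodesic if and only if $\den(p,v)=\Omega(\n{v})=0$, this dichotomy is precisely a dichotomy about the presence of conjugate points in $M$.

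\emph{Case 1: $\Omega$ has no positive zeros.} Then no two points of $M$ are conjugate along any geodesic, so $\exp_p : T_pM \to M$ is a local diffeomorphism at every point. Pulling back the metric of $M$ by $\exp_p$ makes $T_pM$ into a complete Riemannian manifold on which $\exp_p$ is a local isometry; completeness of the domain upgrades $\exp_p$ to a Riemannian covering. Since $M$ is simply connected, this covering is a diffeomorphism, and hence $M$ is diffeomorphic to $\R^n$.

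\emph{Case 2: $\Omega$ has a smallest positive zero $R$.} Then along every geodesic in $M$ the first conjugate point to its initial point occurs at distance exactly $R$. Since a geodesic ceases to be minimising past its first conjugate point, every point of $M$ can be reached from any fixed $p\in M$ by a geodesic of length at most $R$; hence $M$ is bounded, and Hopf-Rinow together with completeness gives compactness with $\diam\le R$. It remains to verify the Blaschke condition $S^d_{\irp}(p)=\Cut(p)$ at every $p\in M$. The inequality $\irp\le R$ is immediate from the existence of a first conjugate point at distance $R$, and on the open ball $B_R(0_p)\subset T_pM$ the map $\exp_p$ is a local diffeomorphism. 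I would then argue that $\exp_p$ restricted to $B_R(0_p)$ is surjective onto an open dense subset $M\setminus \exp_p(S_R(0_p))$, and by the completeness of the pulled-back metric (as in Case 1, cut off at radius $R$) plus simple connectedness of $M$, this restriction is a diffeomorphism onto its image, forcing $\irp=R$ and $\Cut(p)=\exp_p(S_R(0_p))=S^d_R(p)$.

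The main obstacle is the last step of Case 2: while the uniformity of the first conjugate distance gives $\irp\le R$ and compactness almost for free, promoting this to genuine injectivity of $\exp_p$ on $B_R(0_p)$ is subtle, because in principle a cut point could arise from two distinct minimising geodesics of common length strictly less than $R$ rather than from a conjugate point. The simply-connected hypothesis must be used essentially here, presumably through a covering-space or degree argument applied to $\exp_p : B_R(0_p) \to M\setminus \exp_p(S_R(0_p))$, exploiting that $\exp_p(S_R(0_p))$ is a proper analytic subset (by Corollary \ref{analyticcor}) whose complement is therefore open, connected, and has trivial fundamental group once one checks that it is dense in $M$.
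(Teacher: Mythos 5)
Your dichotomy on the zero set of the radial density profile, and your treatment of Case 1 (no conjugate points $\Rightarrow$ $\exp_p$ is a Riemannian covering $\Rightarrow$ diffeomorphism by simple connectedness), coincide exactly with the paper's argument. The divergence is in Case 2: the paper does not attempt to verify the Blaschke property by hand. It only extracts the statement that the first conjugate point along every geodesic from every point occurs at the same distance $r_0$, and then invokes the Allamigeon--Warner theorem (\cite[Corollary 5.31]{besse_closed}), which says precisely that a complete simply connected manifold with this uniform first-conjugate-distance property is Blaschkean. That theorem is a substantive result whose proof (via Warner's analysis of the conjugate locus and Morse-theoretic index arguments) is not reproduced in the paper.

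Your Case 2 is therefore an attempt to reprove Allamigeon--Warner from scratch, and the step you yourself flag as "subtle" is a genuine gap, not a routine verification. The specific repair you sketch does not work: knowing that $\exp_p(S_R(0_p))$ is a compact (or even analytic) subset whose complement is open and dense in $M$ gives you no control on the fundamental group of that complement. Removing a subset of codimension $\ge 1$ from a simply connected manifold can easily create nontrivial $\pi_1$ (already $\R^3$ minus a line), and in the genuine Blaschke examples the cut locus has codimension as low as $1$ (for $\RP^n$-type quotients) or $2$ (for $\C\text{P}^m$), so density is far too weak a property to run a covering-space argument on $M\setminus\exp_p(S_R(0_p))$. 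Moreover, before injectivity is established you cannot identify $\exp_p(S_R(0_p))$ with $\Cut(p)$, nor conclude that $\exp_p$ restricted to $B_R(0_p)$ is proper onto its image, which a covering or degree argument would require. The clean way to close Case 2 is to do what the paper does and cite the Allamigeon--Warner theorem; as written, your proof establishes only $\diam\le R$, compactness, and $\irp\le R$, which falls short of the Blaschke condition.
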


\begin{proof}
  By the previous lemma we know that $M$ is GH. Let $p\in M$. For every 
  $0\neq v\in T_pM$ set $\gamma_v(r):=\exp_p\left(r\frac{v}{\n{v}}\right)$ 
  for $r\in \Rpl$. Suppose there is no conjugate point along $\gamma_v$ for all 
  $0\neq v\in T_pM$. Then $\exp_p:T_pM\to M$ is a covering map and, since 
  $M$ is simply connected, a diffeomorphism. 
  
  So take a $0\neq v_0\in T_pM$ and an $r_0\in\Rpl$ such that the first conjugate point along 
  $\gamma_{v_0}$ is $\gamma_{v_0}(r_0)$. Then the first 
  conjugate point along $\gamma_v$ is $\gamma_{v}(r_0)$ for all $0\neq v\in T_pM$, since 
  $\den(p,\cdot)$ is radial. Note that $r_0$ is the same for every point in $M$.
  This means that $M$ is a Blaschke manifold by the Allamigeon-Warner theorem, cf. \cite[Corollary 5.31]{besse_closed}.
\end{proof}

\subsection{Strongly Harmonic Manifolds}

The interesting result of this subsection is Proposition 
\ref{LHimplSH}, which can also be found in \cite[Theorem 1.1]{szabo_main}.
However, we do not need any of the following statements for our discussion.

\begin{thm}[heat kernel]
  Let $M$ be a compact Riemannian manifold. 
  There exists a unique $k:\Rp\times M \times M\to \R$ with
  the following properties.
  \begin{enumerate}
    \item $k$ is continuous, of class $C^1$ in the first variable and of class $C^2$ in the second.
    \item $$\forall\; t\in\Rp\;\;\forall\; q\in M:\;\;(\partial_t + \Delta) k(t,\cdot,q) = 0.$$
    \item $$\forall\; f\in C^\infty(M)\;\;\forall\; q\in M:\;\;\lim_{t\to 0} \int_M k(t,\cdot,q)f\;dM = f(q).$$
  \end{enumerate}
  This $k$ is actually smooth and $k(t,p,q)=k(t,q,p)$ holds for every $t\in\Rp$ and $p,q\in M$.
\end{thm}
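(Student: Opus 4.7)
The plan is to construct the heat kernel via the spectral decomposition of the Laplacian and then verify the listed properties one by one. Since $M$ is compact, the operator $\Delta$ acting on $C^\infty(M)\subset L^2(M)$ is essentially self-adjoint and has compact resolvent, so by the spectral theorem there exists an $L^2$-orthonormal basis $(\varphi_k)_{k\in\N_0}$ of $L^2(M)$ consisting of smooth eigenfunctions of $\Delta$ with eigenvalues $0=\lambda_0<\lambda_1\le\lambda_2\le\dots\to\infty$. I would take this spectral decomposition as the only nontrivial input (alternatively one can cite the standard parametrix construction, but the spectral route is cleaner for a self-contained presentation).

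With this in hand, I would define
$$k(t,p,q):=\sum_{k=0}^\infty e^{-\lambda_k t}\varphi_k(p)\varphi_k(q)$$
as a formal series and show that for every $t_0>0$ it converges uniformly, together with all its derivatives in $t$, $p$ and $q$, on $[t_0,\infty)\times M\times M$. The key ingredient here is the Weyl-type bound $\lambda_k\sim c\,k^{2/n}$ together with sup-norm estimates $\n{\varphi_k}_{C^j}\le C_j\,\lambda_k^{N_j}$ for some $N_j\in\N$, obtained by iterated elliptic regularity and Sobolev embedding applied to $\Delta^m\varphi_k=\lambda_k^m\varphi_k$. The exponential factor $e^{-\lambda_kt_0}$ then dominates any polynomial in $\lambda_k$ and gives absolute and uniform convergence, from which smoothness of $k$ follows. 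Symmetry $k(t,p,q)=k(t,q,p)$ is immediate from the defining formula.

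For property (2) I would differentiate term by term, using $\Delta\varphi_k=\lambda_k\varphi_k$ and $\partial_t e^{-\lambda_k t}=-\lambda_k e^{-\lambda_kt}$, which gives $(\partial_t+\Delta)k(t,\cdot,q)=0$; the interchange of sum and derivative is legitimate by the uniform convergence above. For property (3), given $f\in C^\infty(M)$ expand $f=\sum_k c_k\varphi_k$ in $L^2$ with $c_k=\sklr{f}{\varphi_k}_{L^2(M)}$; then
$$\int_M k(t,\cdot,q)f\,dM=\sum_{k=0}^\infty e^{-\lambda_kt}c_k\varphi_k(q),$$
where exchanging sum and integral is again justified by uniform convergence on $[t_0,\infty)\times M\times M$. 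Smoothness of $f$ and the polynomial bound on $\n{\varphi_k}_{C^0}$ force the $c_k$ to decay faster than any polynomial in $\lambda_k$ (integrate by parts against $\Delta^m$), so the series above converges uniformly on $M$ as $t\to 0$ and tends to $\sum c_k\varphi_k(q)=f(q)$.

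The main obstacle, and the step I would spend most of the care on, is \emph{uniqueness}. Suppose $k_1,k_2$ are two such kernels and set $h:=k_1-k_2$. For any $f\in C^\infty(M)$ and $q\in M$ define $u(t,p):=\int_M h(t,p,q)f(q)\,dM(p)$; then $u$ solves $(\partial_t+\Delta)u=0$ on $\R^{>0}\times M$ and $\lim_{t\to 0}u(t,p)=0$ uniformly in $p$. The function $t\mapsto\n{u(t,\cdot)}_{L^2(M)}^2$ has derivative $-2\sklr{\nabla u}{\nabla u}_{L^2(M)}\le 0$ by Green's first identity, so it is non-increasing and tends to $0$ at $t=0^+$, forcing $u\equiv 0$. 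Varying $f$ and $q$ yields $k_1=k_2$. Finally, once uniqueness is established, the smoothness and symmetry properties asserted at the end follow automatically from the explicit spectral representation constructed above.
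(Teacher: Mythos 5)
The paper does not actually prove this theorem; it only states it and refers to \cite[Section III.E]{berger_spectre}, where the kernel is built by the Minakshisundaram--Pleijel parametrix method. Your spectral construction is a legitimate alternative route: granting the spectral theorem for $\Delta$ on a compact manifold (Rellich compactness plus elliptic regularity, independent of the heat kernel, so no circularity), the convergence estimates via $\n{\varphi_k}_{C^j}\le C_j\lambda_k^{N_j}$ and Weyl's law, the term-by-term verification of (2), the rapid decay of the coefficients $c_k$ for (3), and the symmetry and smoothness of the resulting $k$ are all sound. The existence half of your argument is fine.

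The uniqueness step, however, has a genuine gap coming from a mismatch of variables. Property (2) is the heat equation in the \emph{second} slot ($p\mapsto k(t,p,q)$, with $q$ a parameter), while property (3) controls the integral over that same second slot, $\int_M k(t,p,q)f(p)\;dM(p)\to f(q)$, producing a function of the \emph{third} slot. Your function $u(t,p):=\int_M h(t,p,q)f(q)\;dM(q)$ (the $dM(p)$ in your formula must be a typo) does solve $(\partial_t+\Delta)u=0$, but its initial behaviour is governed by $\int_M h(t,p,q)f(q)\;dM(q)$, about which hypothesis (3) says nothing: a competitor kernel $k_2$ satisfying (1)--(3) need not be symmetric a priori, so you cannot swap the roles of $p$ and $q$. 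Conversely, $v(t,q):=\int_M h(t,p,q)f(p)\;dM(p)$ does tend to $0$ by (3), but does not visibly solve the heat equation in $q$. The standard repair is the Green's-identity (Duhamel) trick: for fixed $q,q'$ and $0<s<t$ show that $s\mapsto\int_M k_1(s,p,q)\,k_2(t-s,p,q')\;dM(p)$ has vanishing derivative, then let $s\to 0^+$ and $s\to t^-$ and use (3) twice to obtain $k_2(t,q,q')=k_1(t,q',q)$; taking $k_1=k_2$ gives symmetry and then uniqueness. (Alternatively, test $h(t,\cdot,q)$ against each eigenfunction $\varphi_j$ and use energy uniqueness for the $L^2$ Cauchy problem starting at a positive time to see that $\int_M h(t,p,q)\varphi_j(p)\;dM(p)=a_je^{-\lambda_jt}$ with $a_j=0$ by (3).) Either way, an additional idea is needed beyond the energy estimate you wrote down; also note that (3) gives only pointwise, not uniform, convergence, so the phrase ``uniformly in $p$'' is unsupported as well.
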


\begin{rem}
  A proof can be found in \cite[Section III.E]{berger_spectre}.
\end{rem}



\begin{defn}[strongly harmonic]
  A compact Riemannian manifold $M$ is said to be \emph{strongly harmonic} if 
  for every $t\in \Rp$ there exists a $K_t: \Rpl\to \R$ such that
  $$\forall\; t\in \Rp\;\;\forall\; p,q\in M:\; k(t,p,q) = K_t(d(p,q)).$$
\end{defn}

\begin{rem}
  The property `strongly harmonic' is often abbreviated by `SH'. A manifold which is 
  SH is often called SH-manifold.

  Since a unique heat kernel also exists in the non-compact case, we 
  could define a notion of strong harmonicity in this case as well, cf. 
  \cite[Theorem 3.5]{strichartz_complete} and \cite[p. 7]{szabo_main}, but 
  this is not needed in the following considerations.
\end{rem}

%

\begin{prop}[SH-manifolds are GH] \label{SHimplGH}
  Every strongly harmonic manifold is globally harmonic.
\end{prop}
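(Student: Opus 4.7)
The plan is to reduce the claim to \emph{local} harmonicity at every point, and then invoke Proposition \ref{LHimplGH} (together with the fact that a compact manifold is complete) to conclude global harmonicity. So the real task is to show that an SH-manifold is LH.

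To extract the density from the heat kernel, I would use the classical Minakshisundaram--Pleijel short-time asymptotic expansion for $k$: for $q$ in a normal neighbourhood of $p$ one has
$$k(t,p,q) \;\sim\; (4\pi t)^{-n/2}\, e^{-d(p,q)^2/(4t)} \Bigl(u_0(p,q) + u_1(p,q)\, t + \cdots\Bigr),$$
where the leading transport coefficient satisfies $u_0(p,q) = \den_p(q)^{-1/2}$. Concretely, I would isolate $u_0$ by the pointwise limit
$$\den_p(q)^{-1/2} \;=\; \lim_{t\to 0}\, (4\pi t)^{n/2}\, e^{d(p,q)^2/(4t)}\, k(t,p,q),$$
valid for all $q\in B_{\irp}(p)$. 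If $M$ is SH then $k(t,p,q)=K_t(d(p,q))$, and both factors $(4\pi t)^{n/2}$ and $e^{d(p,q)^2/(4t)}$ depend on $q$ only through $d(p,q)$. Hence the limit $\den_p(q)^{-1/2}$ is itself a function of $d(p,q)$ alone, which means $\den_p$ is radial on the pointed ball $\hat B_{\irp}(p)$. Since this holds at every $p\in M$, the manifold is LH. Being compact it is complete, so Proposition \ref{LHimplGH} upgrades this to global harmonicity.

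The main obstacle is a pure citation/technical matter rather than a conceptual one: the Minakshisundaram--Pleijel expansion has not been stated in the paper, only the existence and uniqueness of the heat kernel (via \cite{berger_spectre}). I would therefore either quote the asymptotic expansion from the same reference, or, if one prefers to avoid heavier machinery, use the Hadamard parametrix construction to exhibit $\den_p(q)^{-1/2}$ as the first transport coefficient and bound the remainder. A secondary subtlety is that the expansion only yields radial symmetry of $\den_p$ \emph{on the injectivity ball}; but this is precisely the definition of local harmonicity at $p$, which is all that is needed before invoking Proposition \ref{LHimplGH}. Note, incidentally, that once LH is established one could alternatively bypass Proposition \ref{LHimplGH} and observe directly that $\den(p,v)$ extends analytically in $v\in T_pM$ (by Corollary \ref{analyticcor} and completeness), so radial symmetry near $0_p$ propagates to all of $T_pM$.
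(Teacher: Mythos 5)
Your argument is correct, but it takes a genuinely different route from the paper. The paper stays entirely inside the machinery it has already built: writing $k(t,\cdot,q)=R_qK_t$ and inserting this into the heat equation, Lemma \ref{lapofradlem} gives $R_q\partial_tK_t=R_qK_t''+\mean_q R_qK_t'$, so $\mean_q R_qK_t'$ is radial; since $K_t'$ cannot vanish on an open set (otherwise $K_t$ would be constant there and $\partial_tK_t$ would be zero, contradicting the reproducing property of the heat kernel), the mean curvature $\mean_q$ is radial, which is exactly characterisation (2) of Proposition \ref{equivalences}. You instead recover the density itself from the short-time asymptotics, isolating the leading Minakshisundaram--Pleijel coefficient $u_0(p,q)=\den_p(q)^{-1/2}$ as a pointwise limit and observing that radiality of $k(t,p,\cdot)$ forces radiality of that limit on $\hat B_{\irp}(p)$. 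Both reductions to local harmonicity are sound, and both then conclude via compactness (hence completeness) and Proposition \ref{LHimplGH}. What your route buys is a direct and conceptually transparent identification of the density; the cost, which you correctly flag, is that the parametrix expansion and the formula for $u_0$ are external inputs that the paper never states, whereas the paper's argument needs nothing beyond the heat equation, Lemma \ref{lapofradlem} and Proposition \ref{equivalences}. Your closing remark that one may bypass Proposition \ref{LHimplGH} by analytic continuation of $\den(p,\cdot)$ is fine, but it is essentially a restatement of the proof of that proposition rather than a genuine shortcut.
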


\begin{proof}
  It suffices to show that $M$ is locally harmonic. 
  For every $t\in\Rp$ define $K_t:\left]0,\ir\right[\to \R$ such that 
  $k(t,\cdot,q)=R_qK_t$ for every $q\in M$.
  Then we have
  $$R_q\partial_tK_t = \partial_t R_qK_t = \partial_t k(t,\cdot,q)= -\Delta k(t,\cdot,q) = -\Delta R_qK_t = R_qK_t'' + \mean_q R_qK_t'.$$
  In particular $K_t:\left]0,\ir\right[\to \R$ is a solution of a linear ODE of second order. 
  Furthermore $K_t'$ is non-zero in a dense subset of $]0,\ir[$ since otherwise $K_t$
  would be constant and $\partial_tK_t$ would be zero, which would contradict the third 
  property of the heat kernel.
  Hence $\mean_q$ is radial.

%

%
%
%

\end{proof}

\begin{thm}[LH-manifolds are SH] \label{LHimplSH}
  Every compact simply connected LH-manifold is strongly harmonic.
\end{thm}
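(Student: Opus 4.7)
Since $M$ is compact LH, it is a fortiori complete LH, so Proposition \ref{LHimplGH} yields that $M$ is globally harmonic. Because $M$ is also simply connected and compact, Allamigeon's theorem \ref{Allamigeon} forces $M$ to be a Blaschke manifold (it cannot be diffeomorphic to $\R^n$). In particular, the global basic commutativity of Lemma \ref{basiccom2} is at our disposal.

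The plan is to fix $q \in M$ and show that the radialisation of the heat kernel around $q$ in its first space slot coincides with the heat kernel itself. Concretely, define
$$\tilde k(t, p, q) := \bigl[R_q A_q\, k(t, \cdot, q)\bigr](p).$$
By construction $\tilde k(t, \cdot, q)$ is radial around $q$. I would then verify that $\tilde k$ satisfies the three defining properties of the heat kernel from the theorem in Subsection on strongly harmonic manifolds, so that uniqueness gives $\tilde k = k$, making $k(t, \cdot, q)$ itself radial around $q$. The symmetry $k(t,p,q) = k(t,q,p)$ converts this into $k(t,p,q) = K_t(d(p,q))$ for a function $K_t$ that is well-defined on $[0,\diam]$ independently of $q$.

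The heat equation $(\partial_t + \Delta)\tilde k(t,\cdot,q) = 0$ follows directly from basic commutativity and the fact that averaging commutes with $\partial_t$:
$$\partial_t \tilde k(t,\cdot,q) = R_q A_q \partial_t k(t,\cdot,q) = - R_q A_q \Delta k(t,\cdot,q) = -\Delta R_q A_q k(t,\cdot,q) = -\Delta \tilde k(t,\cdot,q),$$
and the requisite $C^2$-regularity in the first space variable is exactly what Lemma \ref{basiccom2} provides. For the initial condition the key observation is that $R_q A_q$ is self-adjoint on $L^2(M)$: writing $M$-integrals in polar coordinates around $q$ one obtains
$$\int_M (R_q A_q f)\, g \, dM = \int_0^{\diam} (A_q f)(r)(A_q g)(r)\, \vol(S_r(q))\, dr,$$
which is manifestly symmetric in $f$ and $g$. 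Consequently, for any smooth test function $f$,
$$\int_M \tilde k(t,p,q) f(p)\, dp = \int_M k(t,p,q)\, (R_q A_q f)(p)\, dp \xrightarrow[t \to 0]{} (R_q A_q f)(q) = (A_q f)(0) = f(q),$$
as required.

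The heart of the argument lies in the global basic commutativity; once that is secured, the only real subtlety is that the singular initial data of $k$ gets smeared out by $R_q A_q$, and this is handled by the self-adjointness trick above. Thus the genuine obstacle is really verifying that $M$ is Blaschkean so that Lemma \ref{basiccom2} applies — which is precisely the point at which compactness and simple connectedness enter, via Allamigeon's theorem. The symmetry of $k$ takes care of the final step from \emph{radial in one slot around every centre} to the stronger \emph{depends only on geodesic distance}.
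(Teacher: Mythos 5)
Your proposal is correct and follows essentially the same route as the paper: reduce to the Blaschke case via Proposition \ref{LHimplGH} and Theorem \ref{Allamigeon}, radialise the heat kernel as $R_qA_qk(t,\cdot,q)$, verify the heat equation via the basic commutativity (Lemma \ref{basiccom2}) and the initial condition by moving $R_qA_q$ onto the test function, and conclude by uniqueness of the heat kernel. Your ``self-adjointness of $R_qA_q$'' observation is exactly the paper's explicit polar-coordinate computation, just packaged more conceptually, and your closing remark on passing from ``radial around every centre'' to a single $K_t(d(p,q))$ via the symmetry of $k$ is a point the paper leaves implicit.
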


\begin{proof}
  We know that $M$ is globally harmonic and a Blaschke manifold of diameter say $D$.
  It suffices to show that $\overline k:\Rp\times M\times M\to \R,\;\;(t,p,q)\mapsto 
  \overline k(t,p,q):=(R_qA_qk(t,\cdot,q))(p)$ 
  also satisfies the properties of the heat kernel $k$, since it is
  unique. 
  Pick $t\in\Rp$ and $p,q\in M$.
  The function $\overline k$ is continuous, of class $C^1$ in the first variable 
  and of class $C^2$ in the second.
  We have 
  \begin{align*}
    \partial_t\overline k(t,\cdot,q) &= \partial_tR_qA_qk(t,\cdot,q)= R_qA_q\partial_tk(t,\cdot,q) = -R_qA_q\Delta k(t,\cdot,q)
    = -\Delta R_qA_qk(t,\cdot,q)\\
    &= -\Delta \overline k(t,\cdot,q)
  \end{align*}
  and 
  \begin{align*}
    \lim_{t\to 0} \int_M \overline k(t,\cdot,q)f\;dM &= \lim_{t\to 0} \int_M R_qA_qk(t,\cdot,q)f\;dM \\
    &= \lim_{t\to 0} \int_0^D(A_qk(t,\cdot,q))(r) \int_{S_1(0_q)} f(\exp_q r\theta) r^{n-1}\den(q,r\theta) \;d\theta dr \\
    &= \lim_{t\to 0} \int_0^D(A_qk(t,\cdot,q))(r) \vol(S_r(q)) (A_qf)(\exp_q r\theta) \;dr \\
    &= \lim_{t\to 0} \int_0^D(A_qf)(r) \int_{S_1(0_q)} k(t,\exp_q r\theta,q)r^{n-1}\den(q,r\theta) \;d\theta dr \\
    &= \lim_{t\to 0} \int_M k(t,\cdot,q)R_qA_qf\;dM \\
    &=  f(q)
  \end{align*}
  where we use $R_qA_qf\in C^\infty(M)$ in the last equality.
\end{proof}



%
%
%
%

\resettheoremcounters

\section{Radial Eigenfunctions} \label{Eigenfunctions}


In this section we discuss some properties of radially symmetric 
eigenfunctions of the Laplacian in a locally harmonic Blaschke manifold $M$. 
We fix an eigenvalue $\lambda>0$ and write $V^\lambda_p$ for 
the space of radial eigenfunctions around $p\in M$. 
Set $D:=\diam$ and denote by $H:\left]0,D\right[\to\R$ the function with $R_pH=\mean_p$.
Since the linear ODE
$$y'' + Hy' + \lambda y=0$$
is central to this section, we will refer to it as `the ODE'.
The main results are summarised in Proposition \ref{odeprop} and Corollary \ref{eigencor}.
They can also be found in \cite[Section 2]{szabo_main}.

\begin{prop} \label{odeprop}
  The ODE has at exactly one solution $y:\left]0,D\right[\to \R$ with the initial conditions 
  $$\lim_{r\to 0} y(r)=1 \;\;\text{and}\;\; \lim_{r\to 0} y'(r)=0.$$
  This solution can be extended to a smooth function $\Phi_\lambda:[0,D]\to \R$. For every $p\in M$ the function 
  $R_p\Phi_\lambda$ is smooth and for $\varphi\in V_p^\lambda$ it holds $\varphi=\varphi(p)R_p\Phi_\lambda$.
\end{prop}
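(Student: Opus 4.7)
The ODE is exactly the condition that $R_pF$ be an eigenfunction of $\Delta$ with eigenvalue $\lambda$, by Lemma \ref{lapofradlem}. First observe that $H$ is the same function around every point: by Proposition \ref{denpointprop} the density is $\den(p,v)=\Omega(\n{v})$ with $\Omega$ independent of $p$, and Lemma \ref{meandenlem} gives $H(r)=\tfrac{n-1}{r}+\Omega'(r)/\Omega(r)$. In particular $r=0$ is a regular singular point of the ODE with leading term $H\sim\tfrac{n-1}{r}$, and the ODE is intrinsic to $\lambda$ rather than to any basepoint.

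For \textbf{existence} the plan is to exploit the global basic commutativity. Since $\lambda$ is an eigenvalue, I would choose a non-zero eigenfunction $\psi$ and a point $p_0\in M$ with $\psi(p_0)\neq 0$. By Lemma \ref{basiccom2} the function $u:=R_{p_0}A_{p_0}\psi$ is $C^2$ on $M$ and satisfies $\Delta u=R_{p_0}A_{p_0}\Delta\psi=\lambda u$. Because LH-manifolds are analytic (Corollary \ref{analyticcor}), elliptic regularity promotes $u$ to $C^\infty(M)$; Lemma \ref{smoothcrit} then forces $F:=A_{p_0}\psi$ to satisfy $F^{(2i-1)}(0)=F^{(2i-1)}(D)=0$ for all $i\geq 1$, and Lemma \ref{lapofradlem} shows $F$ solves the ODE on $(0,D)$. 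Setting $\Phi_\lambda:=F/\psi(p_0)$ produces a smooth function on $[0,D]$ solving the ODE with $\Phi_\lambda(0)=1$ and $\Phi_\lambda'(0)=0$.

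For \textbf{uniqueness} I would recast the ODE in self-adjoint form $(r^{n-1}\Omega y')'+\lambda r^{n-1}\Omega y=0$, so that for any two solutions $y_1,y_2$ on $(0,D)$ the Wronskian-type quantity $r^{n-1}\Omega(r)\bigl(y_1y_2'-y_1'y_2\bigr)$ is constant. Since $\Omega(r)\to 1$ and $r^{n-1}\to 0$ as $r\to 0$, boundedness of $y_i$ and $y_i'$ at $0$ forces this constant to vanish; any two solutions with the prescribed limits are therefore linearly dependent and, by comparing values at $0$, equal. The same calculation shows the space of solutions bounded at $r=0$ is one-dimensional, which is the key input for the last claim: if $\varphi=R_pF\in V_p^\lambda$, then Lemma \ref{smoothcrit} makes $F$ smooth on $[0,D]$ with $F'(0)=0$, Lemma \ref{lapofradlem} makes $F$ a solution of the ODE, and $F(0)=\varphi(p)$; both $F$ and $\varphi(p)\Phi_\lambda$ are then bounded solutions at $0$ agreeing at $r=0$, so $\varphi=\varphi(p)R_p\Phi_\lambda$. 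The smoothness of $R_p\Phi_\lambda$ on $M$ for every $p$ follows from Lemma \ref{smoothcrit} applied to the odd-order vanishing of $\Phi_\lambda$ at $0$ and $D$ established in the existence step.

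The \textbf{main obstacle} is the smoothness of $\Phi_\lambda$ at the far endpoint $r=D$: a purely local Frobenius analysis at $r=0$ only constructs a candidate solution, and there is no obvious local reason for it to behave well at the cut-locus singularity $r=D$. The global basic commutativity of Lemma \ref{basiccom2} is exactly what bypasses this difficulty, since it produces a globally $C^2$ radial eigenfunction on all of $M$ at one stroke, after which elliptic regularity and Lemma \ref{smoothcrit} deliver the required smoothness at both endpoints simultaneously.
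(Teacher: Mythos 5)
Your proof is correct, and its existence half coincides with the paper's: both obtain $\Phi_\lambda$ by averaging an eigenfunction $\psi$ with $\psi(p_0)\neq 0$, invoking the global basic commutativity (Lemma \ref{basiccom2}) plus elliptic regularity to get a smooth radial eigenfunction, and normalising. Where you genuinely diverge is the uniqueness step. The paper multiplies the ODE by $\overline y'$ for the difference $\overline y$ of two solutions and runs an energy-monotonicity argument: $z:=\tfrac12((\overline y')^2+\lambda\overline y^2)$ satisfies $z'=-H(\overline y')^2\le 0$ near $0$ because $H\sim\tfrac{n-1}{r}>0$, and $z\to 0$ forces $z\equiv 0$ near $0$, then Picard--Lindel\"of propagates this to all of $]0,D[$. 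You instead pass to the self-adjoint form $(r^{n-1}\Omega y')'+\lambda r^{n-1}\Omega y=0$ and observe that the modified Wronskian $r^{n-1}\Omega(y_1y_2'-y_1'y_2)$ is constant and must vanish by boundedness at $r=0$. Both arguments are sound, but yours buys a little more: it shows outright that the space of solutions bounded together with their derivatives at $r=0$ is one-dimensional, which lets you dispatch the final claim $\varphi=\varphi(p)R_p\Phi_\lambda$ uniformly, including the degenerate case $\varphi(p)=0$ that the paper's terse ``hence the claim follows'' glosses over. You also make explicit two points the paper leaves implicit: that $H$, and hence ``the ODE,'' is independent of the basepoint (via Proposition \ref{denpointprop} and Lemma \ref{meandenlem}), and that the smoothness of $R_p\Phi_\lambda$ for \emph{every} $p$ comes from Lemma \ref{smoothcrit} applied to the odd-order vanishing at both endpoints. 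Two cosmetic quibbles: analyticity is not needed for the elliptic-regularity upgrade from $C^2$ to $C^\infty$, and it is the mapping property $A_p:C^\infty(M)\to C^\infty([0,D])$, not Lemma \ref{smoothcrit}, that makes $F=A_p\varphi$ smooth; neither affects correctness.
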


\begin{proof}
  \emph{Uniqueness:}
  Given two solutions $y_1,y_2:\left]0,D\right[\to\R$ with 
  $$\lim_{r\to 0} y_i(r)=1 \;\;\text{and}\;\; \lim_{r\to 0} y_i'(r)=0, \;\;\; i=1,2$$
  we get a solution $\overline y:=y_1-y_2$ with 
  $$\lim_{r\to 0} \overline y(r)=0 \;\;\text{and}\;\; \lim_{r\to 0} \overline y'(r)=0.$$
  We have to show that $\overline y=0$. By multiplying the ODE with $\overline y'$ we get
  $$0=\overline y''\overline y' + H(\overline y')^2 + \lambda \overline y \,\overline y'=\frac{((\overline y')^2)'}{2}+H(\overline y')^2 +\lambda\frac{(\overline y^2)'}{2}.$$
  By setting 
  $$z:=\frac{1}{2}((\overline y')^2+\lambda \overline y^2)\ge 0$$
  we get 
  $$z'=\frac{1}{2}((\overline y')^2+\lambda \overline y^2)'=-H(\overline y')^2\le 0$$
  on $]0,\eps[$ with $\eps>0$ sufficiently small. Because of
  $$\lim_{r\to 0} z(r)=0$$
  it follows that $z|_{]0,\eps[}=0$ and $\overline y|_{]0,\eps[}=0$. 
  Then $\overline y=0$ holds by the Picard-Lindel\"{o}f theorem.

  \emph{Existence:}
  Let be $\varphi,\psi\in V^\lambda$ and $p,q\in M$. The function $R_pA_p\varphi$ is again 
  an eigenfunction for the eigenvalue $\lambda$ by the global 
  basic commutativity (Theorem \ref{basiccom2}). In particular, 
  $R_pA_p\varphi$ is smooth. 
  For $p$ we pick $\varphi$
  such that $\varphi(p)\neq 0$ and set 
  $$\Phi_\lambda := \frac{A_p\varphi}{\varphi(p)}.$$
  This definition is independent of the choices since by Lemma \ref{lapofradlem} 
  we get that $A_p\varphi$, $A_q\varphi$ and $A_p\psi$ solve the ODE. 
  Hence the claim follows.
\end{proof}

\begin{rem}
  In the following, we will use the notation $\Phi_\lambda:[0,D]\to\R$ for the unique
  extended solution of the ODE with the described initial conditions and call it `the solution'.
\end{rem}

\begin{cor} \label{eigencor}
  The space of eigenfunctions is spanned by the radial eigenfunctions, i.e.
  $$V^\lambda=\spn{V_p^\lambda \;|\; p \in M}=\spn{R_p\Phi_\lambda \;|\; p \in M}.$$
\end{cor}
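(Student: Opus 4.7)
I would split the statement into the two equalities. The second one, $\spn{V_p^\lambda\mid p\in M}=\spn{R_p\Phi_\lambda\mid p\in M}$, follows directly from Proposition \ref{odeprop}: every $\varphi\in V_p^\lambda$ equals $\varphi(p)\cdot R_p\Phi_\lambda$, and $R_p\Phi_\lambda$ itself lies in $V_p^\lambda$ and is nonzero because $\Phi_\lambda(0)=1$. Hence $V_p^\lambda=\R\cdot R_p\Phi_\lambda$ for every $p\in M$, which makes the two spans coincide.

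For $V^\lambda=\spn{V_p^\lambda\mid p\in M}$ the inclusion $\supseteq$ is obvious. For $\subseteq$, my plan is to exploit finite-dimensionality of $V^\lambda$ (standard for the Laplacian on a compact manifold) and show that the subspace $W:=\spn{R_p\Phi_\lambda\mid p\in M}\subseteq V^\lambda$ has trivial $L^2$-orthogonal complement inside $V^\lambda$; together these force $W=V^\lambda$. So I would pick $\psi\in V^\lambda$ with $\skl{\psi}{R_q\Phi_\lambda}=0$ for every $q\in M$ and aim to deduce $\psi\equiv 0$.

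The computation is straightforward. Switching to polar coordinates around $q$ and using $\den_q=R_q\Omega$ (which is available by local harmonicity together with Proposition \ref{denpointprop}), the inner product factors through the average operator:
\begin{align*}
\skl{\psi}{R_q\Phi_\lambda}=\volS\int_0^D\Phi_\lambda(r)\,(A_q\psi)(r)\,r^{n-1}\Omega(r)\,dr.
\end{align*}
The global basic commutativity (Lemma \ref{basiccom2}) ensures that $R_qA_q\psi$ is still a $\lambda$-eigenfunction, and being radial around $q$ it lies in $V_q^\lambda$; by Proposition \ref{odeprop} it must equal $\psi(q)\,R_q\Phi_\lambda$, i.e.\ $A_q\psi=\psi(q)\Phi_\lambda$. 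Substituting this back yields
\begin{align*}
\skl{\psi}{R_q\Phi_\lambda}=\psi(q)\,\nl{R_q\Phi_\lambda}^2,
\end{align*}
and since $\Phi_\lambda(0)=1$ together with the positivity of $r^{n-1}\Omega(r)$ near $0$ makes the norm strictly positive, the orthogonality assumption forces $\psi(q)=0$ for every $q\in M$.

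I do not expect any genuine obstacle: the only mildly technical manoeuvre is rewriting the $L^2$-inner product in polar coordinates in the form that makes $A_q\psi$ appear explicitly. Once that is in place, Proposition \ref{odeprop} collapses $A_q\psi$ to a scalar multiple of $\Phi_\lambda$ in a single step and the proof closes itself.
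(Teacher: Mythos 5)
Your proposal is correct and follows essentially the same route as the paper: the paper also proves the non-trivial inclusion by taking an eigenfunction $L^2$-orthogonal to all $R_p\Phi_\lambda$, rewriting $\skl{\varphi}{R_p\Phi_\lambda}$ as $\skl{R_pA_p\varphi}{R_p\Phi_\lambda}=\varphi(p)\nl{R_p\Phi_\lambda}^2$ via the averaging trick and Proposition \ref{odeprop}, and concluding $\varphi=0$. Your write-up merely makes explicit the polar-coordinate computation and the finite-dimensionality of $V^\lambda$, which the paper leaves implicit.
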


\begin{proof}
  Assume there were a $0\neq \varphi\in V^\lambda$ with
  $\skl{\varphi}{R_p\Phi_\lambda}=0$ for all $p\in M$. 
  Hence
  $$0=\skl{\varphi}{R_p\Phi_\lambda} = \skl{R_pA_p\varphi}{R_p\Phi_\lambda}= 
  \varphi(p)\skl{R_p\Phi_\lambda}{R_p\Phi_\lambda}.$$
  So either $\varphi=0$ or $\nl{R_p\Phi_\lambda}=0$ for a $p\in M$. Both possibilities 
  contradict the assumptions.
\end{proof}




\begin{prop}[harmonicity and $L^2$-product] \label{conharprop}
  Let $M$ be a locally harmonic Blaschke manifold. Then
  for every $p\in M$ 
  and smooth $F,G:[0,D]\to \R$ the function $$M\to \R, \; q\mapsto \skl{R_pF}{R_qG}$$
  is radial around $p$, i.e.~the $L^2$-product of two radial functions is radial again.
\end{prop}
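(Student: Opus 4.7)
The plan is to exploit the spectral decomposition of the Laplacian on the compact manifold $M$ and to reduce everything to a single key identity about the inner product of two radial eigenfunctions around distinct centers.

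The first step I would carry out is to establish that, for every eigenvalue $\lambda$ in the spectrum of $\Delta$,
\[
\skl{R_p\Phi_\lambda}{R_q\Phi_\lambda} = \Phi_\lambda(d(p,q))\,\nl{R_p\Phi_\lambda}^2.
\]
Since $R_q\Phi_\lambda \in V^\lambda$, the global basic commutativity (Lemma \ref{basiccom2}) ensures that $R_pA_pR_q\Phi_\lambda$ lies in $V^\lambda$ and is radial around $p$; by Proposition \ref{odeprop} it is therefore a scalar multiple of $R_p\Phi_\lambda$, and evaluating at $p$ gives $R_pA_pR_q\Phi_\lambda = \Phi_\lambda(d(p,q))\,R_p\Phi_\lambda$. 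A brief Fubini calculation using polar coordinates shows that $R_pA_p$ is self-adjoint on $L^2(M)$, and combining this with $R_pA_p R_p\Phi_\lambda = R_p\Phi_\lambda$ yields the identity. Note that $\nl{R_p\Phi_\lambda}^2 = \int_0^D \Phi_\lambda(r)^2\,\vol(S_r(p))\,dr$ is itself independent of $p$ by Proposition \ref{denpointprop}, so the right-hand side is manifestly radial in $q$ around $p$.

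Next I would reduce the general case to this one by an $L^2$ spectral decomposition. The function $R_pF$ lies in $L^2(M)$ and decomposes as $R_pF = \sum_\lambda P_\lambda R_pF$, where $P_\lambda$ is the orthogonal projection onto $V^\lambda$. Since $P_\lambda$ commutes with $\Delta$ it also commutes with $R_pA_p$ (by Lemma \ref{basiccom2}), and since $R_pA_p R_pF = R_pF$, each projection $P_\lambda R_pF$ is radial around $p$, so $P_\lambda R_pF = \alpha_\lambda(F)\,R_p\Phi_\lambda$ for a scalar. One computes
\[
\alpha_\lambda(F) = \frac{\skl{R_pF}{R_p\Phi_\lambda}}{\nl{R_p\Phi_\lambda}^2} = \frac{\int_0^D F(r)\Phi_\lambda(r)\vol(S_r(p))\,dr}{\int_0^D \Phi_\lambda(r)^2\vol(S_r(p))\,dr},
\]
which by Proposition \ref{denpointprop} does not depend on $p$; analogously $P_\lambda R_qG = \beta_\lambda(G)\,R_q\Phi_\lambda$ with $\beta_\lambda(G)$ independent of $q$.

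Putting the pieces together, orthogonality of distinct eigenspaces and the key identity give
\[
\skl{R_pF}{R_qG} = \sum_\lambda \alpha_\lambda(F)\,\beta_\lambda(G)\,\nl{R_p\Phi_\lambda}^2\,\Phi_\lambda(d(p,q)),
\]
which is a function of $d(p,q)$ alone. The main obstacle I anticipate is the bookkeeping around the spectral expansion and its convergence, but this is standard: $R_pF$ and $R_qG$ are bounded, hence in $L^2(M)$, and Cauchy-Schwarz bounds the series absolutely by $\nl{R_pF}\,\nl{R_qG}$, so the identity holds pointwise as an equality of numbers.
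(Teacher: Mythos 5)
Your proof is correct and follows essentially the same route as the paper's: both expand in the orthogonal system $(R_p\Phi_{\lambda_i})$ of radial eigenfunctions and reduce to the identity $\skl{R_p\Phi_\lambda}{R_q\Phi_\lambda}=\Phi_\lambda(d(p,q))\,\nl{R_p\Phi_\lambda}^2$, which the paper obtains by computing $A_qR_p\Phi_\lambda$ directly in polar coordinates and you obtain via self-adjointness of $R_pA_p$. The remaining differences are cosmetic: you expand both factors symmetrically and are more explicit about the $L^2$-convergence, which the paper glosses over.
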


\begin{proof}
  Let $q\in M$ and $\Omega:[0,D]\to\R$ the function with $R_q\Omega = \den_q$.
  Denote by $(\lambda_i)_{i\in\N_0}$ the spectrum of the Laplacian.
  Then $(R_p\Phi_{\lambda_i})_{i\in\N}$ forms an orthogonal basis of 
  the space of radial functions around $p$.
  Let $a_i\in\R$ be the coefficients of $R_pF$ in this basis.
  
  By Proposition \ref{odeprop} we get
  $$A_qR_p\Phi_{\lambda_i}=(A_qR_p\Phi_{\lambda_i})(0)\Phi_{\lambda_i}=(R_p\Phi_{\lambda_i})(q)\Phi_{\lambda_i}.$$
  Hence
  \begin{align*}
    \skl{R_pF}{R_qG} &= \int_MR_pFR_qG \;dM \\
    &= \sum_{i\in\N_0} a_i\int_M R_p\Phi_{\lambda_i} R_qG \;dM \\
    &= \sum_{i\in\N_0} a_i \int_0^D \int_{S_1(0_q)} (R_p\Phi_{\lambda_i})(\exp_qr\theta) G(r)r^{n-1}\Omega(r) \;d\theta dr\\
    &= \sum_{i\in\N_0} a_i \volS \int_0^D (A_qR_p\Phi_{\lambda_i})(r) G(r)r^{n-1}\Omega(r) \;dr \\
    &= \volS \left(\sum_{i\in\N_0} \left(a_i  \int_0^D \Phi_{\lambda_i}(r) G(r)r^{n-1}\Omega(r) \;dr\right) (R_p\Phi_{\lambda_i})(q)\right).
  \end{align*}
  This implies the claim.
\end{proof}

\begin{rem}
  If we set $F:= \Phi_\lambda=:G$ in the above computation, we get
  $$\skl{R_p\Phi_\lambda}{R_q\Phi_\lambda} = \left(\volS \int_0^D \Phi_\lambda(r)^2 r^{n-1}\Omega(r) \;dr\right) (R_p\Phi_\lambda)(q).$$
  The statement ``if in a Blaschke manifold $M$ the $L^2$-product 
  of two radial functions is radial again, then $M$ is 
  locally harmonic'' is also true, cf. \cite[Proposition 2.1]{szabo_main}.
\end{rem}

\resettheoremcounters

\section{The `Nice Embedding' of Harmonic Manifolds}\label{embedding}

For this section let $M$ be a locally harmonic Blaschke 
manifold. 
The density function $\den_p$
in $p\in M$ is radial with $\den_p=R_p\Omega$ for a suitable
$\Omega:[0,D]\to\R$. 
For a smooth $G:[0,D]\to\R$ we set
$$\nd{G} := \sqrt{\int_0^D G(r)^2r^{n-1}\Omega(r) \;dr}.$$
Then it holds
$$\nl{R_pG}=\sqrt{\volS\int_0^D G(r)^2r^{n-1}\Omega(r) \;dr}=\sqrt{\volS}\nd{G}.$$
The following results allow us to embed $M$ in a Euclidean space such that the 
geodesics are mapped into congruent screw lines. Together with Lemma \ref{circlesymlem} this 
forms the key idea for the proof of Lichnerowicz's conjecture.
The finite-dimensional version can be found in \cite[Theorem 6.99]{besse_closed}, 
the infinite-dimensional in \cite[Theorem 3.1]{szabo_main}.

\begin{thm}[embedding theorem]
  For a non-constant $G\in C^\infty([0,D])$ we define the map
  $$R^G:M\to L^2(M),\;\; p\mapsto R^G(p):=c_GR_pG$$
  with 
  $$c_G:=\frac{\sqrt{n}}{\nd{G'}\sqrt{\volS}}.$$ 
  This map has the following properties.
  \begin{enumerate}
    \item $R^G(M)\subset S_{C_G}$ where $S_{C_G}$ is the sphere in $L^2(M)$ of
      radius 
      $$C_G:=\frac{\nd{G}\sqrt{n}}{\nd{G'}}.$$
    \item For a normalised geodesic $\gamma$ of $M$ the curve $R^G\circ \gamma$ is 
      a screw line of $L^2(M)$. For two normalised geodesics $\gamma$ and $\sigma$ of $M$ 
     the screw lines $R^G\circ \gamma$ and $R^G\circ \sigma$ have the same screw function.
     They are therefore congruent.
    \item $R^G$ is an isometric immersion.
  \end{enumerate}
\end{thm}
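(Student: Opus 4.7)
My plan is to verify (1), (3), (2) in that order, since (2) relies on (3) for arc-length parametrisation. Part (1) is immediate: the identity $\nl{R_pG}=\sqrt{\volS}\nd{G}$ (which rests on the radiality of $\den_p$ from Proposition \ref{denpointprop}) gives
$$\nl{R^G(p)}^2=c_G^2\volS\nd{G}^2=\frac{n\nd{G}^2}{\nd{G'}^2}=C_G^2,$$
so $R^G(M)\subset S_{C_G}$.

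For (3), I would regard $R^G$ as an $L^2(M)$-valued map $p\mapsto c_G G(d(p,\cdot))$ and differentiate at $p$ in direction $v\in T_pM$. Away from $\Cut(p)\cup\{p\}$, a standard first-variation computation yields $\partial_s|_{s=0}d(\exp_p(sv),q)=-g_p(v,\theta(q))$ with $\theta(q):=\exp_p^{-1}(q)/d(p,q)$, so pointwise
$$(dR^G_p(v))(q)=-c_G\,G'(d(p,q))\,g_p(v,\theta(q)).$$
Integrating the square in polar coordinates, using $\den_p=R_p\Omega$ and the elementary identity $\int_{S_1(0_p)}g_p(v,\theta)^2\,d\theta=\frac{\volS}{n}\,g_p(v,v)$, one obtains
$$\nl{dR^G_p(v)}^2=c_G^2\nd{G'}^2\cdot\frac{\volS}{n}g_p(v,v)=g_p(v,v),$$
so $R^G$ is an isometric immersion. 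The main technical obstacle here is passing from the pointwise derivative to a genuine Fréchet derivative in $L^2$; this is handled by noting that $\Cut(p)$ has measure zero and applying dominated convergence to the difference quotients, whose modulus is controlled uniformly by $\|G'\|_\infty$ on the compact manifold $M$.

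For (2), the key ingredient is Proposition \ref{conharprop}: the map $q\mapsto\skl{R_pG}{R_qG}$ is radial around $p$, producing a function $\Psi_G:[0,D]\to\R$ with $\skl{R_pG}{R_qG}=\Psi_G(d(p,q))$. Expanding the norm square of the difference then gives
$$\nl{R^G(\gamma(s_0+s))-R^G(\gamma(s_0))}^2=2C_G^2-2c_G^2\Psi_G(d(\gamma(s_0+s),\gamma(s_0))).$$
Because every geodesic in a Blaschke manifold is simple and closed of length $2D$ (Proposition \ref{blaschkesc}), the distance $d(\gamma(s_0+s),\gamma(s_0))$ depends only on $s$, independently both of $s_0$ and of the chosen normalised geodesic. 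Together with (3), which guarantees that $R^G\circ\gamma$ is parametrised by arc length, this shows that $R^G\circ\gamma$ is a screw line and that any two such screw lines share the same screw function. Applying the Hilbert-space analogue of Lemma \ref{congscrewlem} (the Gram--Schmidt argument there is valid verbatim in the separable Hilbert space $L^2(M)$) then delivers the desired congruence.
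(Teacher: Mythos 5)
Your proposal is correct and follows essentially the same route as the paper: part (1) via $\nl{R_pG}=\sqrt{\volS}\nd{G}$, part (2) via Proposition \ref{conharprop} combined with Lemma \ref{congscrewlem}, and part (3) via the first-variation/polar-coordinate computation with $\int_{S_1(0_p)}\cos^2\angle(v,\theta)\,d\theta=\frac{\volS}{n}$. Your added care about the Fr\'echet derivative across the cut locus and the observation that (3) is needed to make $R^G\circ\gamma$ unit speed (as required by the definition of a screw line) are sensible refinements of the same argument, not a different approach.
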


\begin{proof}
  \begin{itemize}
    \item[\emph{1.}]
      For $p\in M$ we have
      $$\nl{R^G(p)} = c_G\nl{R_pG} = c_G\sqrt{\volS}\nd{G} = C_G.$$
      This means $R^G(M)\subset S_{C_G}$.
    \item[\emph{2.}]
      For $p,q\in M$ we have
      \begin{align*}
	\nl{R^G(p) - R^G(q)}^2 &= \nl{R^G(p)}^2+ \nl{R^G(q)}^2- 2 \skl{R^G(p)}{R^G(q)}\\
	&= 2C_G^2-2 \skl{R^G(p)}{R^G(q)} \\
	&= 2C_G^2-2 c_G^2\skl{R_pG}{R_qG}.
      \end{align*}
      By Proposition \ref{conharprop} the function $\skl{R_pG}{R_qG}$ only
      depends on $d(p,q)$. For $s_0,s\in\R$ we set $p:=\gamma(s_0+s)$ and 
      $q:=\gamma(s_0)$ respectively $p:=\sigma(s_0+s)$ and 
      $q:=\sigma(s_0)$ to get the claim.
    \item[\emph{3.}]
    Pick $p\in M$ and $v\in T_pM$ with $\n{v}=1$. Let $\gamma$ be a geodesic parametrised 
      by arc length with $\gamma(0)=p$ and $\gamma'(0)=v$. We have 
      \begin{align*}
	\nl{(dR^G)_p(v)} &= \nl{\left.\frac{d}{dt}\right|_{t=0} R^G(\gamma(t))} \\
	&= c_G\nl{\left.\frac{d}{dt}\right|_{t=0} R_{\gamma(t)}G} \\
	&= c_G\nl{\left.\frac{d}{dt}\right|_{t=0} G(d(\gamma(t),\cdot))} \\
	&= c_G\sqrt{\int_M \left(\left.\frac{d}{dt}\right|_{t=0} G(d(\gamma(t),\cdot))\right)^2\;dM} \displaybreak\\
	&= c_G\sqrt{\int_0^D\int_{S_1(0_p)} \left(\left.\frac{d}{dt}\right|_{t=0} G(d(\gamma(t),\exp_pr\theta))\right)^2 r^{n-1}\Omega(r) \;d\theta dr} \\
	&= c_G\sqrt{\int_0^D\int_{S_1(0_p)} G'(d(p,\exp_pr\theta))^2 \; \cos^2 \angle(v,\theta) \; r^{n-1}\Omega(r) \;d\theta dr} \\
	&= c_G\sqrt{\int_0^D G'(r)^2 r^{n-1}\Omega(r)\;dr} \sqrt{\int_{S_1(0_p)} \cos^2 \angle(v,\theta)  \;d\theta} \\
	&= c_G\nd{G'}\sqrt{\frac{\volS}{n}}\\
	&= 1.
      \end{align*}
      This shows that $R^G$ is an isometric immersion.
  \end{itemize}
\end{proof}

\begin{cor}[Besse's nice embedding: special case $G=\Phi_\lambda$] \label{bessenicecor}
  For an eigenvalue $\lambda>0$ of the Laplacian denote by $\Phi:=\Phi_\lambda$ the 
  solution of the ODE and
  set $\overline M:=R^{\Phi}(M)$.
  \begin{enumerate}
    \item Let $\Phi(D)=1$ and $M$ be diffeomorphic 
      to the sphere $S^n$. 
      Then $\overline M$ is diffeomorphic to $\RP^n$ and a locally harmonic Blaschke manifold. 
      The map 
      $R^{\Phi}:M\to \overline M$ is the universal Riemannian covering map.
    \item Let $\Phi(D)\neq 1$ or $M$ be not diffeomorphic 
      to the sphere $S^n$. 
      Then the map $R^{\Phi}:M\to V^\lambda$ is an 
      injective isometric immersion, i.e.~an embedding since $M$ is compact.
      The manifold $\overline M$ is a minimal submanifold of the sphere $S_{C_{\Phi}}$.
      For a unit speed geodesic $\gamma$ of $M$ set $c:=R^{\Phi}\circ \gamma$. 
      Then we have for every $s_0,s\in\R$
      $$\skl{c(s_0)}{c(s)} = C_{\Phi}^2\Phi(d(\gamma(s_0),\gamma(s))).$$
  \end{enumerate}
\end{cor}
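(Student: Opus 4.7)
The plan is to extract information about $R^\Phi$ from the inner-product formula for radial eigenfunctions in Proposition \ref{conharprop}. Applying the remark following that proposition with $F = G = \Phi$ (noting $R_p\Phi \in V^\lambda$ by Proposition \ref{odeprop}) one obtains
\[
\skl{R_p\Phi}{R_q\Phi} = \volS\,\nd{\Phi}^2\,\Phi(d(p,q)).
\]
Multiplying by $c_\Phi^2$ and using $c_\Phi^2\,\volS\,\nd{\Phi}^2 = C_\Phi^2$ immediately gives the inner-product identity $\skl{R^\Phi(p)}{R^\Phi(q)} = C_\Phi^2\,\Phi(d(p,q))$ claimed in Case~2, together with
\[
\nl{R^\Phi(p) - R^\Phi(q)}^2 = 2\,C_\Phi^2\,\bigl(1 - \Phi(d(p,q))\bigr).
\]
Consequently $R^\Phi(p) = R^\Phi(q)$ if and only if $\Phi(d(p,q)) = 1$.

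Next I analyze the fibers of $R^\Phi$. By the embedding theorem, $R^\Phi$ is an isometric immersion, hence locally injective, so all its fibers are discrete. For $r \in (0,D)$ the level set $\{q : d(p,q) = r\}$ equals the smooth $(n-1)$-dimensional geodesic sphere $S_r(p)$; if $\Phi(r) = 1$, this entire sphere would collapse to $R^\Phi(p)$, violating discreteness. Hence $\Phi(r) \neq 1$ for all $r \in (0,D)$, and the fiber over $R^\Phi(p)$ is $\{p\}$ when $\Phi(D) \neq 1$ and $\{p\}\cup\Cut(p)$ when $\Phi(D) = 1$.

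In Case~1, discreteness of the fiber $\{p\}\cup\Cut(p)$ forces $\Cut(p)$ to be $0$-dimensional; together with its connectedness (as the image of the connected sphere $S_D(0_p)$ under $\exp_p$) this reduces $\Cut(p)$ to a single point $\sigma(p)$, and Proposition \ref{blaschkespecial} supplies the involutive isometry $\sigma$. The map $R^\Phi$ is then a $2$-to-$1$ local isometry onto $\overline M = M/\sigma \cong \RP^n$, and simple connectedness of $S^n$ identifies it as the universal Riemannian cover; local harmonicity and the Blaschke property descend through the local isometry. In Case~2, if $M \not\cong S^n$, then by the contrapositive of Proposition \ref{blaschkespecial} the cut locus $\Cut(p)$ has positive dimension (being a connected submanifold that is not a single point), so discreteness forces $\Phi(D) \neq 1$. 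Thus in every subcase of Case~2 one has $\Phi(D) \neq 1$, the fiber collapses to $\{p\}$, and $R^\Phi$ is an injective isometric immersion of a compact manifold, hence an embedding.

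The last task is minimality of $\overline M$ in $S_{C_\Phi}$. Since $R^\Phi(p)(x) = c_\Phi\,\Phi(d(p,x))$ is symmetric in $(p,x)$ and each $R_x\Phi$ is a $\lambda$-eigenfunction, $\Delta R^\Phi = \lambda\,R^\Phi$ holds componentwise. An integration by parts against $\Phi\cdot r^{n-1}\Omega$ in the ODE $\Phi'' + H\Phi' + \lambda\Phi = 0$, using the boundary conditions $\Phi'(0) = \Phi'(D) = 0$ (the latter from Lemma \ref{smoothcrit} applied to the smooth function $R_p\Phi$), yields $\nd{\Phi'}^2 = \lambda\,\nd{\Phi}^2$, so $C_\Phi^2 = n/\lambda$. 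This is precisely Takahashi's criterion identifying $R^\Phi$ as a minimal isometric immersion into $S_{C_\Phi}$. The main obstacle is the fiber analysis: one must leverage local injectivity of the immersion both to rule out roots of $\Phi - 1$ in the interior $(0,D)$ and to force $\Phi(D) \neq 1$ whenever $\Cut(p)$ is positive-dimensional.
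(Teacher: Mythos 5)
Your proposal is correct and follows essentially the same route as the paper: the inner-product identity from the remark after Proposition \ref{conharprop}, the observation that a fiber of the isometric immersion $R^\Phi$ cannot contain a positive-dimensional metric sphere (so non-injectivity forces a singleton cut locus, hence the $S^n$/$\RP^n$ case via Proposition \ref{blaschkespecial}), and Takahashi's criterion combined with $\lambda = n/C_\Phi^2$ for minimality. The only cosmetic differences are your more explicit fiber bookkeeping and deriving $\nd{\Phi'}^2=\lambda\nd{\Phi}^2$ by integrating the ODE against $\Phi\hat\Omega$ rather than via Green's identity on $M$; both yield the same identity.
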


\begin{proof}
  Let $p,q\in M$ be points with $R^{\Phi}(p)=R^{\Phi}(q)$.
  From the remark after Proposition \ref{conharprop} and the proof of the second 
  statement of the embedding theorem we get
  \begin{align*}
    0 = \nl{R^{\Phi}(p) - R^{\Phi}(q)}^2 &= 2C_{\Phi}^2-2c_{\Phi}^2  \skl{R_p\Phi}{R_q\Phi} \\
    &= 2C_{\Phi}^2 -  2\frac{n}{\nd{\Phi'}^2\volS} \volS \nd{\Phi}^2  (R_p\Phi)(q)\\
    &= 2C_{\Phi}^2 - 2C_{\Phi}^2 (R_p\Phi)(q).
  \end{align*}
  It follows $1 = (R_p\Phi)(q) = \Phi(d(p,q))$.
  This means that $R^{\Phi}(p)=R^{\Phi}(\overline q)$ 
  for all $\overline q\in S_{d(p,q)}^d(p)$. 
  We recall that $S_{d(p,q)}^d(p)$ is a submanifold of $M$. 
  But then it must be a single point since otherwise we had a contradiction 
  to the fact that $R^{\Phi}$ is an isometric immersion.
  The only case in which $S_{d(p,q)}^d(p)$ is singleton occurs for
  $M$ diffeomorphic to the sphere $S^n$ and $d(p,q)=D$, cf. Proposition \ref{blaschkespecial}. 
  Then $\overline M$ is Blaschkean and diffeomorphic to $\R\text{P}^n$.
  The map $R^{\Phi}:M\to \overline M$ is the universal 
  Riemannian covering map and therefore $\overline M$ locally harmonic. 
  This completes the first part. 

  Now we can assume that $R^\Phi$ is injective.
  With the formula in the remark after Proposition \ref{conharprop} we compute for $s_0,s\in\R$
  \begin{align*}
    \skl{c(s_0)}{c(s)} &= c_{\Phi}^2  \skl{R_{\gamma(s_0)}\Phi}{R_{\gamma(s)}\Phi}
    = c_{\Phi}^2 \volS \nd{\Phi}^2 \Phi(d(\gamma(s_0),\gamma(s)))\\
    &=C_{\Phi}^2 \Phi(d(\gamma(s_0),\gamma(s))).
  \end{align*}
  We are left to show that the embedding is minimal. First we remark that for every $p\in M$ 
  \begin{align*}
    \lambda &= \frac{\skl{\Delta R_p\Phi}{R_p\Phi}}{\nl{R_p\Phi}^2} 
    = \frac{1}{\nl{R_p\Phi}^2} \int_M \skl{\grad R_p\Phi}{\grad R_p\Phi} \;dM \\
    &= \frac{1}{\nl{R_p\Phi}^2} \int_M \nabla_{E^p} R_p\Phi \; \nabla_{E^p} R_p\Phi \;dM 
    = \frac{\nl{R_p\Phi'}^2}{\nl{R_p\Phi}^2} 
    = \frac{\nd{\Phi'}^2}{\nd{\Phi}^2} \\
    &= \frac{n}{C^2_{\Phi}}
  \end{align*}
  holds.
  Set $N:=\dim V^\lambda$ and choose an $L^2$-orthonormal basis $(\varphi_1, \dots,\varphi_N)$ of $V^\lambda$. 
  Coordinates $(x_1,\dots,x_N)$ on $\overline M$ are given by
  $$x_i(R^{\Phi}(p)) := \skl{\varphi_i}{R^{\Phi}(p)}=c_{\Phi}\int_M \varphi_i R_p\Phi \;dM, \;\;i=1,\dots,N.$$
  The submanifold $\overline M\subset S_{C_{\Phi}}$ is minimal if and only if every $x_i$ is 
  an eigenfunction to the eigenvalue $\frac{n}{C^2_{\Phi}}$, cf. \cite[Note 14, Example 3]{kobayashi_geo}. Because of 
  $\Delta R_p\Phi=\lambda R_p\Phi$ this is equivalent to $\lambda = \frac{n}{C^2_{\Phi}}$.



\end{proof}

\begin{rem}
  Since we show in the next section that a locally harmonic Blaschke manifold which is
  diffeomorphic to $\R\text{P}^n$ carries the canonical metric, our $M$ in the first case is then 
  the sphere with the canonical metric. Hence we need not consider the first case in the following considerations.
  
  Noteworthy is the characterisation of globally harmonic manifolds and Blaschke manifolds 
  through (minimal) embeddings into a sphere such that all geodesics are mapped into 
  congruent screw lines, cf. \cite[Theorems 6.2 and 6.5]{sakamoto_helical}.

  The embedding in the second case above is actually Besse's
  nice embedding, cf. \cite[Theorem 6.99]{besse_closed}.
  It is defined by 
  $$M\ni p\mapsto \sqrt{\frac{n\vol(M)}{\lambda N}}\left(\varphi_1(p), \dots, \varphi_N(p)\right)\in \R^N.$$
  We have for every $p\in M$ and $i=1,\dots,N$
  $$\skl{R^\Phi(p)}{\varphi_i} = c_\Phi \volS\nd{\Phi}^2\varphi_i(p)$$
  and therefore
  $$\sqrt{\frac{n\vol(M)}{\lambda N}} = c_\Phi \volS\nd{\Phi}^2= C_\Phi \nl{R_p\Phi}=\sqrt{\frac{n}{\lambda}}\nl{R_p\Phi}$$
  or
  $$\vol(M) =N\nl{R_p\Phi}^2.$$
\end{rem}
  


\resettheoremcounters

\section{Proof of Lichnerowicz's Conjecture}

In this section let $M$ be a locally harmonic Blaschke manifold and
assume without loss of generality that $\diam=\pi$. 
By pinning down the possible density functions of $M$ (Lemma \ref{formofden}) 
 we are able to find its first eigenvalue and to solve the ODE for it (Lemma \ref{firsteigenlem}). 
Then we present two variants of the proof of Lichnerowicz's conjecture. 
The first one uses the nice embedding (Corollary \ref{bessenicecor}) and Lemma \ref{circlesymlem}. 
The second one is intrinsic, but more complex so that we only 
refer to the literature.

For the rest of the section we fix an eigenvalue $\lambda>0$, a point $p\in M$ and 
the solution $\Phi:=\Phi_\lambda$ of the ODE.
From now on we consider the average $A_pf:[0,\pi]\to\R$ of a radial function $f:M\to\R$ 
around $p$ to be periodically extended to $\R$. That means we consider the function
$f\circ \gamma:\R\to\R$, where $\gamma:\R\to M$ is a unit speed geodesic with $\gamma(0)=p$,
instead of $A_pf:[0,\pi]\to\R$. This new function is $2\pi$-periodic 
and even. In particular, $\Phi:\R\to\R$ has these properties.
Alternatively, we can set 
$$A_pf:\R\to\R,\;\; r\mapsto A_pf\left(\pi - \left|\pi - \left|r\right| \bmod 2\pi\right|\right)$$
since 
$$\forall \;r,t\in\R:\;\;d(\gamma(r),\gamma(t)) = \pi - \left|\pi - \left|r-t\right| \bmod 2\pi\right|$$
holds.
Furthermore we set $\Omega:=A_p\den_p$ and 
$$\hat\Omega:\R\to\R,\;\; r \mapsto r^{n-1}\Omega(r)$$
so that in particular $\hat\Omega$ is odd, $\hat\Omega^2$ is even and
$$\Phi''+\frac{\hat\Omega'}{\hat\Omega}\Phi'+\lambda\Phi=0.$$
holds on $\R\setminus \{k\pi\;|\;k\in\Z\}$.


\subsection{Possible Density Functions}

We present Szab\'o's careful analysis of the possible 
forms of density functions for locally harmonic Blaschke manifolds.
More precisely, our aim is it to show Lemma \ref{formofden}, which states that 
the function $\hat\Omega$ is the product of a power of sine and a power
of cosine. 
We follow \cite[Section 4]{szabo_main} with two exceptions. The proof of 
Lemma \ref{denpolylem} is a slightly changed version of \cite[Theorem 2]{nikolayevsky_harm} and the proof of 
Lemma \ref{rootdistrilem} is new. \\

First we show that $\Phi$ and $\hat \Omega^2$
are trigonometric polynomials of a special form.

\begin{lem} \label{phipolylem}
  There is a polynomial $P:\R\to\R$ with real coefficients 
  such that $$\Phi = P \circ \cos.$$
\end{lem}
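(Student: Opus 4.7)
The plan is to show that $\Phi$ is a trigonometric polynomial; combined with its evenness and $2\pi$-periodicity, this yields $\Phi(r)=a_0+\sum_{k=1}^m a_k\cos(kr)$, and since $\cos(kr)=T_k(\cos r)$ for the Chebyshev polynomials $T_k$, one obtains $\Phi=P\circ\cos$ with $P(x)=a_0+\sum_{k=1}^m a_k T_k(x)$.

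To establish that $\Phi$ is a trigonometric polynomial, I would exploit the finite-dimensionality of the eigenspace $V^\lambda\subset C^\infty(M)$ (which holds because $M$ is compact) together with the fact that every function of the form $R_q\Phi$ with $q\in M$ lies in $V^\lambda$ by Proposition \ref{odeprop}. Fix a unit speed geodesic $\gamma:\R\to M$ with $\gamma(0)=p$; using the Blaschke distance formula $d(\gamma(s),\gamma(r))=\pi-|\pi-|r-s|\bmod 2\pi|$ together with the evenness and $2\pi$-periodicity of the extended $\Phi$ (which give $\Phi(u)=\Phi(\pi-|\pi-|u|\bmod 2\pi|)$), one checks that $R_{\gamma(s)}\Phi(\gamma(r))=\Phi(r-s)$ for all $r,s\in\R$. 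Restricting $V^\lambda$ along $\gamma$ therefore maps the family $\{R_{\gamma(s)}\Phi\}_{s\in\R}$ onto the family $\{\Phi(\cdot-s)\}_{s\in\R}$, so the subspace $W:=\spn{\Phi(\cdot-s)\mid s\in\R}$ of $C^\infty(\R)$ satisfies $\dim W\le\dim V^\lambda<\infty$.

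Now $W$ is finite-dimensional and invariant under all translations $T_s\colon f\mapsto f(\cdot-s)$. Differentiating the one-parameter group $s\mapsto T_s|_W$ in $\End(W)$ at $s=0$ shows that $W$ is closed under $d/dr$, so $\Phi,\Phi',\Phi'',\ldots$ are linearly dependent and $\Phi$ satisfies a linear ODE with constant real coefficients. The solutions of such an ODE are exponential polynomials $\sum_j p_j(r)e^{\mu_j r}$; since $\Phi$ is continuous and $2\pi$-periodic, hence bounded, the exponents $\mu_j$ must be purely imaginary and the $p_j$ constant, and periodicity then forces $\mu_j\in i\Z$. Thus $\Phi$ is a real trigonometric polynomial, and the conclusion follows from the first paragraph.

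The main conceptual step is the passage from the global geometric input (finite-dimensionality of $V^\lambda$ together with Proposition \ref{odeprop}) to the one-dimensional rigidity statement that the orbit of $\Phi$ under translations spans a finite-dimensional subspace of $C^\infty(\R)$; once this is in place, the remaining reductions are standard. I expect the slightly delicate book-keeping with the extended $\Phi$ and the Blaschke distance formula to be the point one has to verify most carefully.
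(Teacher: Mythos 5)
Your proposal is correct and follows essentially the same route as the paper: restrict the radial eigenfunctions $R_{\gamma(s)}\Phi$ along a geodesic to identify their span with $\spn{\Phi(\cdot-s)\;|\;s\in\R}$, deduce finite dimensionality from $\dim V^\lambda<\infty$, pass via the translation one-parameter group to a constant-coefficient linear ODE, and use periodicity, evenness and Chebyshev polynomials to conclude. The only difference is that you reprove inline (with a boundedness argument for the exponents) what the paper delegates to Lemmata \ref{spanlem}, \ref{specialspanlem} and \ref{chebpoly}.
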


\begin{proof}
  Let $\gamma:\R\to M$ be a unit speed geodesic in $M$ with $\gamma(0)=p$.
  We have 
  \begin{align*}
    \spn{(R_q\Phi)\circ\gamma\;|\; q\in \gamma(\R)} &= \spn{\Phi(d(\gamma(\cdot),q))\;|\; q\in \gamma(\R)}\\
    &= \spn{\Phi(\pi - \left|\pi - \left|\cdot - t\right| \bmod 2\pi\right|)\;|\; t\in\R} \\
    &= \spn{\Phi(\cdot-t)\;|\; t\in \R}.
  \end{align*}
  Since 
  $\spn{R_q\Phi\;|\; q\in \gamma(\R)}$ is a subspace of the 
  finite-dimensional $V^\lambda$, it is finite-dimensional. 
  Because precomposing with $\gamma$
  is linear, we have that 
  $\spn{\Phi(\cdot-t)\;|\; t\in \R}$
  is a finite-dimensional subspace of $C^\infty(\R)$.
  Because $\Phi$ is $2\pi$-periodic and even, the claim follows 
  from the Lemmata \ref{specialspanlem} and \ref{chebpoly}.
\end{proof}

%
%



\begin{lem} \label{denpolylem}
  There is a polynomial $O:\R\to\R$ with real coefficients 
  such that 
  $$\hat \Omega^2 = O \circ \cos.$$
\end{lem}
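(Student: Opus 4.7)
The plan is to use the polynomial structure of $\Phi=\Phi_\lambda$ from Lemma \ref{phipolylem} together with the ODE $\Phi''+H\Phi'+\lambda\Phi=0$ (where $H=\hat\Omega'/\hat\Omega$) to derive an explicit closed-form expression for $\hat\Omega^2$. Writing $\Phi=P\circ\cos$ with $P\in\R[X]$ of degree $d\ge 1$, direct computation yields
$$\Phi'(r)=-P'(\cos r)\sin r,\qquad \Phi''(r)=P''(\cos r)(1-\cos^2 r)-P'(\cos r)\cos r,$$
so the ODE rearranges to $H(r)\sin r = N(\cos r)/P'(\cos r)$, where $N(u):=P''(u)(1-u^2)-P'(u)\,u+\lambda P(u)$ is a polynomial of degree at most $d$. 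Hence $H\sin$ is a priori a rational function of $\cos r$.

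The first crucial step is to upgrade this rational function to a polynomial. In a Blaschke manifold no conjugate point occurs before the first cut point, so $\hat\Omega>0$ throughout $(0,\pi)$ and $H$ is smooth there. Consequently $N(u)/P'(u)$ extends smoothly across every zero of $P'$ lying in $(-1,1)$, which forces $P'\mid N$ as polynomials. A degree count $\deg N\le d$ versus $\deg P'=d-1$ then yields $Q(u):=N(u)/P'(u)=au+b$ for some $a,b\in\R$, so that
$$H(r)=\frac{a\cos r+b}{\sin r}.$$

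The next step is integration. Using the antiderivatives $\int\cot r\,dr=\ln|\sin r|$ and $\int\csc r\,dr=\tfrac12\ln\bigl|(1-\cos r)/(1+\cos r)\bigr|$, the identity $H=(\ln\hat\Omega)'$ integrates to
$$\hat\Omega(r)^2 = C\,(1-\cos r)^{a+b}(1+\cos r)^{a-b}$$
for some constant $C>0$. Finally, matching asymptotics pins down the exponents. The standard Jacobi field expansion gives $\hat\Omega(r)\sim r^{n-1}$ as $r\to 0$, which combined with $(1-\cos r)^{(a+b)/2}\sim(r^2/2)^{(a+b)/2}$ forces $a+b=n-1\in\N$; smoothness of $\hat\Omega$ at the conjugate value $r=\pi$ (indeed analyticity, by Corollary \ref{analyticcor}) together with $(1+\cos r)^{a-b}\sim ((\pi-r)^2/2)^{a-b}$ then forces $a-b\in\N_0$. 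Thus $\hat\Omega^2=C(1-\cos r)^{n-1}(1+\cos r)^{a-b}$ is manifestly a polynomial in $\cos r$, yielding the desired $O\in\R[X]$.

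The main obstacle is the algebraic-to-geometric hand-off in the first step: passing from smoothness of $H$ on $(0,\pi)$ to the \emph{polynomial} divisibility $P'\mid N$ requires matching the multiplicity of each zero of $P'$ in $(-1,1)$ with the corresponding vanishing order of $N$, not merely the set of zeros. A secondary but essential subtlety is certifying that the vanishing order of $\hat\Omega$ at $r=\pi$ is a genuine non-negative integer rather than a half-integer, which relies on the analyticity of the associated Jacobi tensor; a parallel remark rules out the degenerate case $\deg Q=0$, in which the formula above would yield $\hat\Omega^2$ blowing up at $r=\pi$, contradicting the vanishing there.
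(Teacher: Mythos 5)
Your route is genuinely different from the paper's: the paper proves this lemma via a Jacobi-tensor identity, $\hat\Omega(r-t)=\det\left(J(r)L(t)-K(r)\right)\hat\Omega(t)$ for suitable tensors $J,K,L$ along a geodesic, which shows that $\spn{\hat\Omega^2(\cdot-t)\;|\;t\in\R}$ is finite-dimensional, and then invokes Lemmata \ref{specialspanlem} and \ref{chebpoly}. Your idea is instead to feed Lemma \ref{phipolylem} into the ODE and solve for $H$. Unfortunately there is a genuine gap at the step you flag as the ``algebraic-to-geometric hand-off'', and it is worse than the multiplicity issue you name. From $H(r)\sin r\cdot P'(\cos r)=N(\cos r)$ on $\left]0,\pi\right[$ and the smoothness of $H\sin$ there, you only control the roots of $P'$ that lie in $\left]-1,1\right[$; for those the multiplicity matching actually does work, since an uncancelled pole of $N/P'$ inside $\left]-1,1\right[$ would force $H\sin$ to blow up. But $P'$ may a priori have real roots outside $[-1,1]$ or non-real roots, and at such a root $\rho$ the rational function $N(u)/P'(u)$ can have a pole with no effect whatsoever on the smoothness of $r\mapsto N(\cos r)/P'(\cos r)$ on $\left]0,\pi\right[$: a term like $c(\cos r-\rho)^{-k}$ with $\rho\notin[-1,1]$ is smooth and bounded on all of $[0,\pi]$. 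Integrating such an $H$ produces factors like $\left|\cos r-\rho\right|^{c}$ in $\hat\Omega$, which are smooth, positive and consistent with everything established up to this point --- except the conclusion. So $P'\mid N$, and hence $\deg(N/P')\le 1$, does not follow.

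The fact that all roots of $P$ and $P'$ are real and lie in $\left]-1,1\right[$ is exactly Lemma \ref{rootdistrilem}, which the paper proves only after, and by means of, the present lemma (its proof manipulates the polynomial $O$), so you cannot appeal to it here without circularity. The rest of your argument --- the integration, and pinning down the exponents from the vanishing orders of $\hat\Omega$ at $0$ and $\pi$ together with analyticity --- is sound once the form $H=(a\cos+b)/\sin$ is granted, but that form is essentially the content of Proposition \ref{formofden} and the remark following it, and cannot be obtained this early by your method.
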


\begin{proof}
  Let $\gamma:\R\to M$ be a unit speed geodesic in $M$ with $\gamma(0)=p$ and 
  let $(e_2,\dots,e_n)$  be a positively oriented orthonormal basis
  of $T_p^\perp\gamma$. Denote by $(E_2,\dots,E_n)$ its parallel translates along 
  $\gamma$. 
  In this proof we will use the representation of Jacobi tensors in
  the basis $(E_2,\dots,E_n)$, i.e.~they are considered to be maps 
  $\R\to \R^{(n-1)\times(n-1)}$.

  Denote by $J$ and $K$ the Jacobi tensors along $\gamma$ with initial conditions
  $J(0)=0$, $J'(0)=\I$, $K(0)=\I$ and $K'(0)=0$ where $\I\in \R^{(n-1)\times(n-1)}$ is the 
  identity matrix.
  Let $r\in\R$ and $t\in \R\setminus \{k\pi\;|\;k\in\Z\}$. We set
  $$L(t):=J^{-1}(t) K(t)$$
  and
  $$\mathcal{J}(t):=J'(t)J^{-1}(t) K(t)-K'(t)=J'(t) L(t)-K'(t).$$ 
  Because of 
  $$J^T(t)J'(t)-(J^T)'(t)J(t)=0$$
  and
  $$J^T(t)K'(t)-(J^T)'(t)K(t)=-\I$$
  we get
  $$J^T(t)\mathcal{J}(t) = J^T(t)J'(t)J^{-1}(t)K(t)-J^T(t)K'(t) = (J^T)'(t)K(t)-J^T(t)K'(t) = \I.$$
  Hence $\mathcal{J}(t)$ is invertible with $\det \mathcal{J}^{-1}(t) = \det J^T(t) = \det J(t)= \hat\Omega(t)$. 

  Set
  $$J_t(r):=(J(r)L(t)-K(r)) \mathcal{J}^{-1}(t).$$
  Because $J_t$ is a Jacobi tensor along $\gamma$ with 
  $$J_t(t) = (J(t)L(t)-K(t))\mathcal{J}^{-1}(t) = 0$$
  and 
  $$J_t'(t) = (J'(t)L(t)-K'(t))\mathcal{J}^{-1}(t) = \I$$
  it holds $\det J_t(r)=\hat\Omega(r-t)$. Hence
  $$\hat\Omega(r-t) = \det J_t(r)=\det\left(J(r)L(t)-K(r)\right) \det \mathcal{J}^{-1}(t)= \det\left(J(r)L(t)-K(r)\right)\hat\Omega(t)$$
  and
  $$\hat\Omega^2(r-t)= \det\left(J(r)L(t)-K(r)\right)^2\hat\Omega^2(t).$$

  By expanding the determinant we see that $\spn{\hat\Omega^2(\cdot - t)\;|\; t\in\R\setminus \{k\pi\;|\;k\in\Z\}}$ 
  is finite-dimensional and therefore $\spn{\hat\Omega^2(\cdot - t)\;|\; t\in\R}$ as well. 
  The Lemmata \ref{specialspanlem} and \ref{chebpoly} yield the claim.



%
%

\end{proof}

The next step is to examine $P$ and $O$ by 
finding restrictions to their possible roots.

%

\begin{lem}
  The numbers $-1$ and $1$ are roots of $O$.
\end{lem}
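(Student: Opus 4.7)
The plan is to use the identity $\hat\Omega^2 = O\circ\cos$ from Lemma~\ref{denpolylem} and evaluate it at the two points in $[0,\pi]$ where $\cos$ attains the values $\pm 1$, namely $r=0$ (giving $+1$) and $r=\pi$ (giving $-1$). The lemma therefore reduces to establishing $\hat\Omega(0)=0$ and $\hat\Omega(\pi)=0$.

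The vanishing at $0$ is immediate from the definition. Since $\hat\Omega(r) = r^{n-1}\Omega(r)$ and $\Omega(0)=\den_p(p)=1$, the factor $r^{n-1}$ forces $\hat\Omega(0)=0$ as soon as $n\ge 2$. Thus $O(1) = \hat\Omega(0)^2 = 0$, so $1$ is a root.

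For the root at $-1$, I need $\hat\Omega(\pi)=0$, equivalently $\Omega(\pi)=0$, i.e.\ $\det J(\pi)=0$ for the Jacobi tensor $J$ associated to any unit-speed radial geodesic $\gamma$ from $p$. Since $M$ is a Blaschke manifold of diameter $\pi$, the point $\gamma(\pi)$ lies in $\Cut(p)$. Invoking the Allamigeon-type reasoning used inside the proof of Theorem~\ref{Allamigeon}, local harmonicity forces the first conjugate distance along every geodesic from $p$ to be a single common value $r_0$; in our setting this $r_0$ must coincide with the injectivity radius $\pi$. Consequently $\gamma(\pi)$ is conjugate to $p$ along $\gamma$, so $\det J(\pi) = 0$, giving $O(-1) = \hat\Omega(\pi)^2 = 0$.

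The only step that is not a pure substitution is the identification of the uniform first conjugate distance with the diameter $\pi$; this is the sole conceptual obstacle and is handled exactly as in the Allamigeon argument already used earlier in the paper. Once it is granted, both roots follow by evaluating $\hat\Omega^2$ at a single point each.
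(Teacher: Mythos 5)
Your overall strategy is exactly the paper's: evaluate the identity $\hat\Omega^2=O\circ\cos$ at $r=0$ and $r=\pi$. The case $r=0$ is handled identically (the factor $r^{n-1}$ kills $\hat\Omega(0)$) and is fine. The difference is that the paper simply writes $O(-1)=\pi^{2n-2}\Omega(\pi)^2=0$ without comment, whereas you try to supply the missing justification that $\Omega(\pi)=0$ --- and that is precisely where your argument has a gap. The Allamigeon reasoning you invoke shows only that \emph{if} some geodesic from $p$ has a conjugate point, then the first conjugate distance is a common value $r_0$; it gives no reason why $r_0$ should equal the injectivity radius $\pi$, and you assert this identification without proof. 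It is in fact false for a general locally harmonic Blaschke manifold: $\RP^n$ with the round metric scaled to diameter $\pi$ satisfies every standing hypothesis of this section, yet its first conjugate distance is $2\pi$, its cut points are not conjugate to $p$, and $\hat\Omega(r)=\left(2\sin\frac{r}{2}\right)^{n-1}$ gives $\hat\Omega(\pi)=2^{n-1}\neq 0$, so that $O(x)=2^{n-1}(1-x)^{n-1}$ has $1$ but not $-1$ as a root. So the step you describe as `handled exactly as in the Allamigeon argument' is not handled there at all.

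The missing ingredient is the simple connectedness of $M$, a hypothesis of the main theorem that this section does not restate. For a simply connected Blaschke manifold either $\Cut(p)$ is a single point $q$, in which case $\exp_p$ maps all of $S_\pi(0_p)$ to $q$ and $q$ is conjugate to $p$ with full multiplicity, or the minimizing geodesics from $p$ to a cut point $q$ form a sphere of dimension $k\ge 1$ and $q$ is conjugate to $p$ with multiplicity $k$, cf.\ \cite[Section 5.D]{besse_closed}; the only way a cut point can fail to be conjugate is $k=0$, which forces $\pi_1(M)=\Z_2$ as for $\RP^n$. Either way $\det J(\pi)=0$, hence $\Omega(\pi)=0$ and $O(-1)=0$. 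To be fair, the paper's own two-line proof suppresses this point entirely, and your write-up at least makes visible where the actual content lies; but as it stands your justification of $r_0=\pi$ does not close the argument.
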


\begin{proof}
  This follows from 
  $$O(-1)=O(\cos \pi)=\hat \Omega(\pi)^2=\pi^{2n-2}\Omega(\pi)^2=0$$
  and 
  $$O(1)=O(\cos 0)=\hat \Omega(0)^2=0\cdot\Omega(0)^2=0.$$
\end{proof}

\begin{lem}
  The following three statements hold. 
  \begin{enumerate}
    \item All roots of $P$ have multiplicity one.
    \item All roots of $P'$ have multiplicity one.
    \item Except $-1$ and $1$, all roots of $O$ are also roots of $P'$. 
  \end{enumerate}
\end{lem}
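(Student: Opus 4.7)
The plan is to rewrite the ODE $\Phi''+(\hat\Omega'/\hat\Omega)\Phi'+\lambda\Phi=0$ in the algebraic variable $x=\cos r$, using $\Phi=P\circ\cos$ and $\hat\Omega^2=O\circ\cos$. A short computation gives $\Phi'(r)=-\sin r\,P'(\cos r)$, $\Phi''(r)=-\cos r\,P'(\cos r)+\sin^2 r\,P''(\cos r)$, and $\hat\Omega'/\hat\Omega=\tfrac{1}{2}(\log\hat\Omega^2)'=-\tfrac{1}{2}\sin r\cdot O'(\cos r)/O(\cos r)$. Substituting these and multiplying through by $2O(\cos r)$, then writing $W(x):=(1-x^2)O(x)$, yields the algebraic ODE
\[2W(x)\,P''(x)+W'(x)\,P'(x)+2\lambda O(x)\,P(x)=0,\]
which will be the workhorse for all three claims together with uniqueness of solutions for the original $r$-ODE.

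For statement (1) I would argue by contradiction. Suppose $P(x_0)=P'(x_0)=0$ for some $x_0\in(-1,1)$, and set $r_0:=\arccos x_0\in(0,\pi)$. Then $\Phi(r_0)=P(x_0)=0$ and $\Phi'(r_0)=-\sin r_0\,P'(x_0)=0$. Since $\hat\Omega(r_0)\neq 0$, the $r$-ODE is regular at $r_0$, so Picard--Lindel\"of uniqueness forces $\Phi\equiv 0$, contradicting $\Phi(0)=1$. The value $x_0=1$ is excluded by $P(1)=\Phi(0)=1\neq 0$; the value $x_0=-1$ requires a Frobenius analysis at the regular singular point $r=\pi$ (whose indicial exponents are $0$ and $b+1$, with $b$ the multiplicity of $-1$ as a root of $O$) and is the main technical point.

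For statement (3), take $x_0\neq\pm 1$ with $O(x_0)=0$ and write $O(x)=(x-x_0)^m O_1(x)$ where $O_1(x_0)\neq 0$ and $m\ge 1$. Since $W(x)=(1-x^2)(x-x_0)^m O_1(x)$ is divisible by $(x-x_0)^m$, one can divide the algebraic ODE by $(x-x_0)^{m-1}$; at $x=x_0$ every term vanishes except the contribution coming from $W'(x)P'(x)$, and an immediate calculation gives
\[m(1-x_0^2)\,O_1(x_0)\,P'(x_0)=0,\]
which forces $P'(x_0)=0$ since $1-x_0^2\neq 0$ and $mO_1(x_0)\neq 0$.

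For statement (2), suppose $P'(x_0)=P''(x_0)=0$. Evaluating the algebraic ODE at $x_0$ gives $2\lambda O(x_0)P(x_0)=0$. If $O(x_0)\neq 0$ (in particular for every $x_0\in(-1,1)$), then $P(x_0)=0$, so $x_0$ is a double root of $P$, contradicting statement (1). If instead $O(x_0)=0$ with $x_0\neq\pm 1$, let $m\ge 1$ be the multiplicity of $x_0$ in $O$; differentiating the algebraic ODE $m$ times and evaluating at $x_0$ kills every term except a nonzero multiple of $O^{(m)}(x_0)P(x_0)$, again forcing $P(x_0)=0$ and a double root of $P$, contradicting statement (1). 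The principal obstacle throughout is the careful treatment of the boundary $x_0=\pm 1$: these correspond to the regular singular points of the $r$-ODE at $r=0,\pi$, where Picard--Lindel\"of uniqueness fails and an indicial/Frobenius analysis is needed to pin down the smooth branch of $\Phi$ and rule out pathological multiplicity patterns.
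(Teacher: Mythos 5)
Your derivation of the algebraic form of the ODE, $2W(x)P''(x)+W'(x)P'(x)+2\lambda O(x)P(x)=0$ with $W=(1-\id^2)O$, is correct and is a genuinely different starting point from the paper, which instead sets $Q:=O(P')^2(1-\id^2)$, computes $\log Q=-2\lambda\int\frac{P}{(1-\id^2)P'}$, and reads off all three statements at once from the partial fraction expansion and the requirement that $Q$ and $O=Q(P')^{-2}(1-\id^2)^{-1}$ be polynomials. Your argument for statement (3) -- dividing the algebraic identity by $(x-x_0)^{m-1}$ and evaluating at $x_0$ to get $m(1-x_0^2)O_1(x_0)P'(x_0)=0$ -- is complete and works for arbitrary (even complex) roots $x_0\neq\pm1$ of $O$; that part is fine and arguably cleaner than the paper's.

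The gap is in statements (1) and (2). At this point in the development nothing is known about the location of the roots of $P$, $P'$ or $O$: they may a priori be complex or real outside $[-1,1]$. (That they are real and interlace inside $(-1,1)$ is exactly the content of the \emph{next} lemma, Lemma \ref{rootdistrilem}, which uses the present lemma as input, so you cannot assume it here.) Your proof of (1) via Picard--Lindel\"of on the $r$-ODE only reaches real $x_0=\cos r_0$ with $r_0\in(0,\pi)$, i.e.~$x_0\in(-1,1)$; complex multiple roots and real multiple roots outside $[-1,1]$ are not addressed, and the case $x_0=-1$ is explicitly deferred to a Frobenius analysis that is announced but not carried out. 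Statement (2) inherits these holes twice over: your case distinction omits $x_0=\pm1$ entirely (where $O$ vanishes, so the evaluation argument degenerates since $W'(\pm1)=\mp2\,O(\pm1)=0$ gives no information), and your reduction of case 2 to statement (1) invokes (1) at points where your proof of (1) does not apply. The algebraic identity itself could be pushed further -- e.g.~at any $x_0$ with $W(x_0)\neq0$, repeated differentiation shows $P(x_0)=P'(x_0)=0$ forces $P\equiv0$ -- but as written the proposal does not close these cases, whereas the paper's partial-fraction argument handles all complex roots uniformly and in particular shows that $\pm1$ are not roots of $P'$ at all.
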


%
%

\begin{proof}
  In $\R\setminus \{k\pi\;|\;k\in\Z\}$ we have the equality
  $$\Phi''+\frac{\hat\Omega'}{\hat\Omega}\Phi'=-\lambda\Phi.$$
  In the first part of the proof we work in a compact interval of $\R\setminus\{k\pi\;|\;k\in\Z\}$ 
  where $\Phi'$ has no roots.
  By setting 
  $$Q:=O(P')^2(1-\id^2)$$ 
  we get
  $$Q\circ \cos=(O\circ \cos)(P'\circ \cos)^2(1-\cos^2)=(O\circ \cos)(P'\circ \cos)^2\sin^2
  =\hat\Omega^2(\Phi')^2$$
  and 
  \begin{align*}
    \left(\log (Q\circ \cos)\right)' &= \left(\log \left(\hat\Omega^2\left(\Phi'\right)^2\right)\right)' 
    = \frac{\left(\hat\Omega^2\left(\Phi'\right)^2\right)'}{\hat\Omega^2(\Phi')^2} 
    = \frac{(\hat\Omega^2)'}{\hat\Omega^2}+\frac{\left(\left(\Phi'\right)^2\right)'}{(\Phi')^2}
    = 2\left(\frac{\hat\Omega'}{\hat\Omega}+\frac{\Phi''}{\Phi'}\right) 
    = -2\lambda\frac{\Phi}{\Phi'} \\
    &= 2\lambda\frac{P\circ \cos}{(P'\circ \cos)\sin}.
  \end{align*}
  Hence
  $$\log (Q\circ \cos) = 2\lambda \int \frac{P\circ \cos}{(P'\circ \cos)\sin}$$
  and the substitution of $\cos$ yields
  $$\log Q = -2\lambda \int \frac{P}{(1-\id^2)P'}.$$

  Let be $x\in\R$ for the rest of the proof.
  Let $\pi_1,\dots,\pi_\nu\in\C$ be the (distinct) roots of $P$ with multiplicities
  $p_1,\dots,p_\nu$. Denote by $\varrho_1,\dots,\varrho_\mu\in\C$ the (distinct) roots of $P'$ which are
  not roots of $P$ and by $r_1,\dots,r_\mu$ their multiplicities. Let the leading 
  coefficients be $A$ and $B$ respectively.
  We can write 
  $$P(x)=A(x-\pi_1)^{p_1}\cdots (x-\pi_\nu)^{p_\nu},$$
  $$P'(x)=B(x-\pi_1)^{p_1-1}\cdots (x-\pi_\nu)^{p_\nu-1}(x-\varrho_1)^{r_1}\cdots (x-\varrho_\mu)^{r_\mu}$$
  and
  $$\log Q(x) = \frac{-2\lambda A}{B} \int \frac{(x-\pi_1)\cdots (x-\pi_\nu)}{(1-x)(1+x)(x-\varrho_1)^{r_1}\cdots (x-\varrho_\mu)^{r_\mu}}\,dx.$$
  By the partial fraction expansion of the integrand we get that 
  $r_1=\dots=r_\mu=1$ and $-1\neq\varrho_i\neq 1$ for $i=1,\dots,\mu$ 
  since otherwise $Q$ would not be a polynomial.
  Moreover the partial fraction expansion gives us
  $$Q(x)=C(1-x)^\sigma(1+x)^\tau(x-\varrho_1)^{q_1}\cdots (x-\varrho_\mu)^{q_\mu}$$
  where $\sigma,\tau,q_1,\dots,q_\mu \in\N_0$ and $C\in\R$. By the definition of $Q$ we even know 
  $\sigma,\tau\ge 1$ and $q_1,\dots,q_\mu\ge 2$.

  Since $O$ is a polynomial and 
  \begin{align*}
    O(x) = {}& Q(x)(P')^{-2}(x)(1-x^2)^{-1} \\
    = {}& C(1-x)^\sigma(1+x)^\tau(x-\varrho_1)^{q_1}\cdots (x-\varrho_\mu)^{q_\mu} \\
    & \cdot\; B^{-2}(x-\pi_1)^{-2(p_1-1)}\cdots (x-\pi_\nu)^{-2(p_\nu-1)}(x-\varrho_1)^{-2}\cdots (x-\varrho_\mu)^{-2}\\
    & \cdot\; (1-x^2)^{-1} \\
    = {}& CB^{-2}(1-x)^{\sigma-1}(1+x)^{\tau-1}(x-\varrho_1)^{q_1-2}\cdots (x-\varrho_\mu)^{q_\mu-2} \\
    & \cdot\;(x-\pi_1)^{-2(p_1-1)}\cdots (x-\pi_\nu)^{-2(p_\nu-1)}
  \end{align*}
  holds,
  we get $-2p_i+2 \ge 0$ for $i=1,\dots,\nu$ and therefore $p_1=\dots=p_\nu=1$.
\end{proof}

We keep the notation of the above proof,
i.e.~denote by 
$\pi_1,\dots,\pi_\nu$ the roots of $P$ and by
$\varrho_1,\dots,\varrho_{\nu-1}$ the roots of $P'$.
Then the roots of $O$ are contained in $\{-1,1,\varrho_1,\dots,\varrho_{\nu-1}\}$.

%
%

\begin{lem} \label{rootdistrilem}
  The roots of $P$ and $P'$ are real numbers and if we arrange them in ascending order, it holds
  $$-1 < \pi_1 < \varrho_1 < \pi_2 < \dots < \pi_{\nu-1} < \varrho_{\nu-1} < \pi_\nu < 1.$$
\end{lem}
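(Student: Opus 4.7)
First I would derive a polynomial ODE for $P$. Substituting $\Phi = P\circ\cos$ and $\hat\Omega = \sqrt{O\circ\cos}$ into $\Phi'' + \frac{\hat\Omega'}{\hat\Omega}\Phi' + \lambda \Phi = 0$ and clearing the denominator $2O(\cos r)$ yields
\[
 2(1-x^2)O(x)P''(x) + \bigl[(1-x^2)O(x)\bigr]'P'(x) + 2\lambda O(x)P(x)=0,
\]
where $x=\cos r$. Matching leading coefficients of $x^{d+\deg O}$ shows that whenever a polynomial $P$ of degree $d$ solves this ODE, necessarily $\lambda = d\bigl(d+\tfrac{\deg O}{2}\bigr)$. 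Conversely, given $\lambda = d(d+\deg O/2)$, the lower-order coefficients of a candidate polynomial can be solved for inductively (the relevant linear coefficient $2[\lambda - (d-m)(d-m+\deg O/2)]$ is nonzero for $m\geq 1$ by injectivity of $e\mapsto e(e+\deg O/2)$ on $\N_0$), producing a polynomial $P_d$ of degree exactly $d$. The associated $R_p(P_d\circ\cos)$ is a smooth radial Laplace eigenfunction on $M$ by Lemma \ref{smoothcrit} (since $P_d\circ\cos$ is even and $2\pi$-periodic). Combined with Lemma \ref{phipolylem} and the one-dimensionality of $V_p^\lambda$ (Proposition \ref{odeprop}), the Laplace spectrum of $M$ is exactly $\{\lambda_k = k(k+\tfrac{\deg O}{2}) : k\in\N_0\}$, with $\deg P_{\lambda_k}=k$.

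The $L^2(M)$-orthogonality of Laplace eigenfunctions for distinct eigenvalues transfers via $x = \cos r$ to orthogonality of $\{P_{\lambda_k}\}$ in $L^2((-1,1), w\,dx)$ with weight $w(x) = \hat\Omega(\arccos x)/\sqrt{1-x^2}$, which is strictly positive on $(-1,1)$ and bounded. Because $\deg P_{\lambda_k}=k$ for every $k$ and $P_{\lambda_0}\equiv 1$, an induction on $k$ together with the Gram--Schmidt uniqueness of orthogonal polynomials shows that $P_{\lambda_k}$ coincides, up to a nonzero scalar, with the $k$-th orthogonal polynomial $R_k$ for $w$. By the classical zero-location theorem, $R_k$ has exactly $k$ simple real zeros in the open support $(-1,1)$. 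Specialising to our $\lambda = \lambda_k$, the polynomial $P = P_{\lambda_k}$ has exactly $\nu=k$ simple real roots $-1<\pi_1<\cdots<\pi_\nu<1$; by the degree count these exhaust all $\nu$ roots of $P$.

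Finally, Rolle's theorem applied to $P$ on each $[\pi_i,\pi_{i+1}]$ yields a real zero $\varrho_i$ of $P'$; these $\nu-1$ distinct real zeros exhaust $\deg P'=\nu-1$ and automatically interlace the roots of $P$, establishing $-1<\pi_1<\varrho_1<\pi_2<\cdots<\varrho_{\nu-1}<\pi_\nu<1$. The main obstacle is the first paragraph: deriving the polynomial ODE and executing the leading-coefficient bookkeeping, in particular verifying the inductive existence of $P_d$ and the bijective identification of the spectrum with $\{k(k+\deg O/2) : k\in\N_0\}$. Given this, the orthogonal-polynomial identification and Rolle step are essentially routine.
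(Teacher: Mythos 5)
Your strategy --- transfer the $L^2(M)$-orthogonality of eigenfunctions to orthogonality of the polynomials $P_{\lambda_k}$ with respect to the weight $w(x)=\hat\Omega(\arccos x)/\sqrt{1-x^2}$ on $(-1,1)$, identify them with the classical orthogonal polynomials for $w$, and finish with the zero-location theorem and Rolle --- is genuinely different from the paper's, which instead derives the polynomial identity $\bigl(O(P')^2(1-\id^2)\bigr)'=2\lambda\,OPP'$ and applies Gauss--Lucas (Lemma \ref{lucasthm}) to trap all roots in $[-1,1]$. Your steps after the first paragraph are sound, and the leading-coefficient computation $\lambda=d\bigl(d+\tfrac{1}{2}\deg O\bigr)$ is correct. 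But the gap you yourself flag in the first paragraph is fatal as written. For the orthogonal-polynomial identification you need polynomial eigenfunctions of \emph{every} degree $0,1,\dots,k-1$ below $k=\deg P$; otherwise $\spn{P_{\lambda_j}\;|\;j<k}$ is a proper subspace of the polynomials of degree $<k$, the test polynomial $\prod_i(x-x_i)$ built from the sign changes of $P$ need not be orthogonal to $P$, and the sign-change count collapses. Your inductive construction does not deliver this ladder: the identity $2(1-x^2)OP''+[(1-x^2)O]'P'+2\lambda OP=0$ is a system of $\deg O+d+1$ linear equations in the $d+1$ coefficients of $P$. Your recursion correctly solves the top $d+1$ of them (the nonvanishing of $\lambda_d-(d-m)(d-m+\tfrac{1}{2}\deg O)$ is indeed what makes that triangular system solvable), but the remaining $\deg O$ equations, coming from the coefficients of $x^{\deg O-1},\dots,x^{0}$, are extra constraints that you never verify.

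This is not a technicality. Dividing by $2O$, the relevant operator is $P\mapsto(1-x^2)P''+\frac{[(1-x^2)O]'}{2O}P'$, and $\frac{[(1-x^2)O]'}{2O}=-x+\frac{(1-x^2)O'}{2O}$ is a polynomial --- so that the operator preserves polynomials and the overdetermined system becomes consistent --- exactly when the roots of $O$ lie in $\{-1,1\}$. That is the content of the lemma \emph{following} this one, whose proof relies on Lemma \ref{rootdistrilem}; assuming the full ladder of polynomial eigenfunctions here is therefore essentially circular. (Consistently, the paper constructs the $\Phi_{\lambda_k}$ by an explicit recursion only \emph{after} Lemma \ref{formofden} has pinned down $\hat\Omega$.) To make your route work you would need an independent argument that every $j(j+\tfrac{1}{2}\deg O)$, $j<k$, occurs in the spectrum with a degree-$j$ polynomial solution; the paper's Gauss--Lucas argument avoids this entirely by working only with the single fixed $\lambda$.
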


\begin{proof}
  From the above proof we have 
  \begin{align*}
    (-\sin) \left(O(P')^2\left(1-\id^2\right)\right)'\circ \cos 
    &= \left(\left(O(P')^2\left(1-\id^2\right)\right)\circ \cos\right)'
    = \left(\hat\Omega^2 \left(\Phi'\right)^2\right)' 
    = -2\lambda \hat\Omega^2 \Phi\Phi'\\
    &= -2\lambda (-\sin)(O P P')\circ\cos.
  \end{align*}
  Lemma \ref{lucasthm} implies that the roots of $O P P'$ lie 
  in the convex hull of the roots of $O(P')^2(1-\id^2)$, i.e.
  $$\{-1,1,\pi_1,\dots,\pi_\nu,\varrho_1,\dots,\varrho_{\nu-1}\} \subset \conv{-1,1,\varrho_1,\dots,\varrho_{\nu-1}}.$$
  From this we get
  $$\conv{-1,1,\pi_1,\dots,\pi_\nu}\subset \conv{-1,1,\varrho_1,\dots,\varrho_{\nu-1}}.$$
  Because of
  $$\{\varrho_1,\dots,\varrho_{\nu-1}\} \subset \conv{\pi_1,\dots,\pi_\nu}$$
  we have 
  $$\{-1,1,\varrho_1,\dots,\varrho_{\nu-1}\} \subset \conv{-1,1,\pi_1,\dots,\pi_\nu}$$
  and 
  $$\conv{-1,1,\varrho_1,\dots,\varrho_{\nu-1}} \subset \conv{-1,1,\pi_1,\dots,\pi_\nu}.$$
  Since 
  $$\{\varrho_1,\dots,\varrho_{\nu-1}\}\cap \{\pi_1,\dots,\pi_\nu\} = \emptyset$$
  we get
  $$\conv{-1,1,\varrho_1,\dots,\varrho_{\nu-1}} =\conv{-1,1,\pi_1,\dots,\pi_\nu}=[-1,1].$$
  From this the claim follows.
\end{proof}

\begin{lem}
  The polynomial $O$ has no roots other than $-1$ and $1$.
\end{lem}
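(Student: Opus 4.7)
The plan is to translate the claim into a geometric statement via the identity $\hat\Omega^2=O\circ\cos$ from Lemma~\ref{denpolylem}. By the previous lemma every root of $O$ lies in $\{-1,1,\varrho_1,\dots,\varrho_{\nu-1}\}$, and by Lemma~\ref{rootdistrilem} each $\varrho_j$ sits strictly inside $(-1,1)$. Hence it will suffice to rule out $O(\varrho_j)=0$ for every $j\in\{1,\dots,\nu-1\}$.

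The key step will be to show that $\hat\Omega$ is strictly positive on $(0,\pi)$. Recall that $\hat\Omega(r)=r^{n-1}\Omega(r)$, where $\Omega$ is the radial density $\den_p$ along a geodesic from $p$. Since $M$ is Blaschkean of diameter $\pi$, we have $\irp=\pi$ by Proposition~\ref{blaschkeinjrad}, so $\exp_p$ is a diffeomorphism on $B_\pi(0_p)$. By Lemma~\ref{denincoords} this makes $\den_p=\sqrt{\det g_{ij}}$ strictly positive on $\hat B_\pi(p)$; in particular $\Omega>0$ on $(0,\pi)$, and therefore $\hat\Omega>0$ there as well.

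Given this positivity, I would finish as follows: if some $\varrho_j$ were a root of $O$, choose $\theta_j\in(0,\pi)$ with $\cos\theta_j=\varrho_j$. Then Lemma~\ref{denpolylem} gives $\hat\Omega(\theta_j)^2=(O\circ\cos)(\theta_j)=O(\varrho_j)=0$, contradicting what was just established. Hence none of the $\varrho_j$ is a root of $O$, and only $\pm 1$ remain as possible roots. I do not expect a serious obstacle; once the identity $\hat\Omega^2=O\circ\cos$ is in place and the candidate roots are localised to $[-1,1]$, the Blaschke property of $M$ does the remaining work.
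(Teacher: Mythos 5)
Your argument is correct and is essentially the paper's own proof: both derive a contradiction from $\hat\Omega^2(r_0)=O(\cos r_0)=0$ for some $r_0\in(0,\pi)$ obtained from a putative interior root of $O$. You additionally spell out why $\hat\Omega$ cannot vanish on $(0,\pi)$ (positivity of $\den_p$ inside the injectivity radius of the Blaschke manifold), a step the paper leaves implicit.
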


\begin{proof}
  We prove the lemma by contradiction. Without loss of generality we may assume 
  that $\varrho_1$ is a root of $O$. Since $-1<\varrho_1<1$ by the last lemma,
  there is $0<r_0<\pi$ with $\cos r_0=\varrho_1$. Then
  $\hat\Omega^2(r_0)=O(\cos r_0)=O(\varrho_1)=0$. This is a contradiction.
\end{proof}

We are now in the position to prove the result we were looking for.

\begin{prop} \label{formofden}
  There are $\tilde C,\alpha,\beta\in\R$ such that
  $$\hat\Omega=\tilde C (1-\cos)^\beta \sin^\alpha.$$
\end{prop}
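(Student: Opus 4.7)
The plan is to assemble the structural information about $O$ gathered in the preceding lemmata into an explicit factorisation, extract a square root on $(0,\pi)$, and finally rewrite the resulting expression in the desired form.

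First I would note that from Lemma \ref{denpolylem} and the immediately preceding two lemmata we already know that $\hat\Omega^2=O\circ\cos$, that the only roots of $O$ in $\R$ are $-1$ and $1$, and that these occur with multiplicities which are at least $1$ (this was the condition $\sigma,\tau\ge 1$ in the proof of the lemma on root-multiplicities). Consequently there exist $C\in\R$ and integers $s,t\ge 1$ such that
$$O(x)=C(1-x)^{s}(1+x)^{t}.$$
Evaluating at any $x=\cos r$ with $r\in (0,\pi)$, where $\hat\Omega(r)>0$ (since on a Blaschke manifold the first conjugate point occurs at distance $\pi$, so $\Omega(r)>0$ for $0<r<\pi$), forces $C>0$.

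Second, I would extract a square root on the open interval $(0,\pi)$. Using the identity $1+\cos r=\sin^{2} r/(1-\cos r)$, valid wherever $1-\cos r>0$, I rewrite
$$\hat\Omega^{2}(r)=C(1-\cos r)^{s}(1+\cos r)^{t}=C(1-\cos r)^{s-t}\sin^{2t} r$$
and take the positive square root, which is legitimate since $\hat\Omega$, $(1-\cos r)^{s-t}$, and $\sin r$ are all positive on $(0,\pi)$. This yields
$$\hat\Omega(r)=\sqrt{C}\,(1-\cos r)^{(s-t)/2}\sin^{t} r\qquad(0<r<\pi),$$
so setting $\tilde C:=\sqrt{C}$, $\beta:=(s-t)/2$ and $\alpha:=t$ gives the desired representation. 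The exponents $\alpha,\beta$ lie in $\R$ as required by the statement; both are in fact non-negative, which is the consistency check I would perform at the end by comparing the local expansions at $r=0$ (where $\hat\Omega\sim r^{n-1}$, forcing $\alpha+2\beta=n-1$) and at $r=\pi$ (where the vanishing order of $\hat\Omega$ is bounded by the conjugate multiplicity, hence $\le n-1$, which guarantees $\beta\ge 0$).

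No step here is truly delicate — the heavy lifting has been done in the root-distribution lemmas. The only thing to be careful about is making the algebraic rearrangement on the open interval $(0,\pi)$ (where all factors are strictly positive), so that taking the square root is unambiguous and does not require fussing with absolute values; the formula then extends by continuity of both sides to the closed interval $[0,\pi]$ and, by the periodicity/evenness conventions fixed at the beginning of the section, to all of $\R$.
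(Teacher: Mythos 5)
Your proposal is correct and follows essentially the same route as the paper: factor $O(x)=C(1-x)^{\sigma}(1+x)^{\tau}$ using the preceding lemmata, substitute $x=\cos r$, use $\sin^2 r=(1-\cos r)(1+\cos r)$ to trade the $(1+\cos)$ factor for a power of $\sin$, and take the square root, yielding $\beta=\frac{\sigma-\tau}{2}$ and $\alpha=\tau$ exactly as in the paper. Your added care about positivity on $]0,\pi[$ and the closing consistency checks on $\alpha,\beta$ correspond to material the paper defers to the remark following the proposition, so nothing essential differs.
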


\begin{proof}
  For all $x\in\R$ we can write 
  $$O(x)=C(1-x)^\sigma(1+x)^\tau$$
  with suitable $\sigma,\tau\in\N$ and $C\in\R^{>0}$.
  Then for all $r\in\R$ holds
  \begin{align*}
    \hat\Omega(r) = \sqrt{O(\cos r)} &= \sqrt{C} (1-\cos r)^{\frac{\sigma}{2}} (1+\cos r)^{\frac{\tau}{2}} 
    = \sqrt{C} (1-\cos r)^{\frac{\sigma}{2}-\frac{\tau}{2}} \sin^{\tau}r.
  \end{align*}
\end{proof}


\begin{rem}
  We keep the notation and get for the mean curvature function
  \begin{align*}
    H := \frac{\hat\Omega'}{\hat\Omega} &= \frac{((1-\cos)^\beta \sin^\alpha)'}{(1-\cos)^\beta \sin^\alpha} \\
    &= \frac{\beta(1-\cos)^{\beta-1} \sin^{\alpha+1} + \alpha(1-\cos)^\beta \cos\sin^{\alpha-1}}{(1-\cos)^\beta \sin^\alpha} \\
    &= \frac{\beta\sin^2+\alpha(1-\cos)\cos}{(1-\cos)\sin} \\
    &= \frac{(\alpha+ \beta)\cos+\beta}{\sin}.
  \end{align*}
  Using Proposition \ref{harmeinsteinprop} and after some lengthy calculations
  we compute the Ricci curvature to be $\alpha + \frac{1}{2}\beta$.
  Since $\hat \Omega$ vanishes of order $n-1$ in $0$ we have
  $\alpha + 2\beta =n-1$. Because of $\Omega(0)=1$ we can deduce $\tilde C=2^\beta$. 

  Actually we can say even more.
  By the Bott-Samelson theorem, cf. \cite[Theorem 7.23]{besse_closed}, 
  we know that $\hat \Omega$ vanishes of order $n-1,0,1,3$ or $7$ in $\pi$. Hence $\alpha$ can 
  only take the values $n-1,0,1,3$ or $7$. Then $\beta$ equals $0,\frac{n-1}{2},\frac{n-2}{2},\frac{n-4}{2}$ 
  or $\frac{n-8}{2}$ respectively. If we set $n=m,2m,4m$ 
  or $16$ respectively, we recover the density functions of the ROSSs (Proposition \ref{denofrossprop}).
\end{rem}

\subsection{Spectrum and Radial Eigenfunctions} \label{specnradeigen}

Because of Lemma \ref{formofden} it is now easy to construct 
concrete eigenvalues and radial eigenfunctions of the Laplacian.
We keep the notation of this lemma and additionally set $\lambda_1:=\alpha+\beta+1$.


\begin{lem} \label{firsteigenlem}
  The number $\lambda_1$ is an eigenvalue and  
  $$\Phi:\R\to\R,\;\; r\mapsto \frac{\lambda_1}{\lambda_1+\beta}\left(\cos r+\frac{\beta}{\lambda_1}\right)$$ 
  is the solution of the ODE, i.e.~$\Phi=\Phi_{\lambda_1}$.
\end{lem}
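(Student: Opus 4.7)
The plan is a direct verification: plug the candidate $\Phi$ into the ODE, check the initial conditions, and then combine Proposition \ref{odeprop} with Lemma \ref{lapofradlem} (and Lemma \ref{smoothcrit}) to conclude both that $\Phi=\Phi_{\lambda_1}$ and that $\lambda_1$ is actually an eigenvalue.

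First I would verify the initial conditions at $r=0$. Since
$$\Phi(r)=\frac{\lambda_1}{\lambda_1+\beta}\left(\cos r+\frac{\beta}{\lambda_1}\right),\qquad \Phi'(r)=-\frac{\lambda_1}{\lambda_1+\beta}\sin r,$$
one reads off $\Phi(0)=\frac{\lambda_1}{\lambda_1+\beta}\cdot\frac{\lambda_1+\beta}{\lambda_1}=1$ and $\Phi'(0)=0$. Next, I would substitute into the ODE on $\R\setminus\{k\pi\mid k\in\Z\}$ using the closed form $H=\frac{(\alpha+\beta)\cos+\beta}{\sin}$ derived in the remark after Proposition \ref{formofden}. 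Because $\Phi'(r)=-\frac{\lambda_1}{\lambda_1+\beta}\sin r$, the $\sin r$ cancels the denominator in $H$, and
$$\Phi''+H\Phi'+\lambda_1\Phi=\frac{\lambda_1}{\lambda_1+\beta}\Bigl(-\cos r-(\alpha+\beta)\cos r-\beta+\lambda_1\cos r+\beta\Bigr),$$
which vanishes precisely because $\lambda_1=\alpha+\beta+1$. By the uniqueness part of Proposition \ref{odeprop}, this identifies $\Phi$ with $\Phi_{\lambda_1}$.

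It remains to show that $\lambda_1$ is genuinely an eigenvalue, i.e.\ that $R_p\Phi$ is a smooth eigenfunction of $\Delta$. Smoothness follows from Lemma \ref{smoothcrit}: the function $\Phi$ is a polynomial of degree one in $\cos$, so every odd derivative $\Phi^{(2i-1)}$ is a constant multiple of $\sin$, which vanishes at both $0$ and $\pi=\diam$; hence $R_p\Phi\in C^{\infty}(M)$. Once $R_p\Phi$ is known to be smooth, Lemma \ref{lapofradlem} gives
$$\Delta R_p\Phi=-R_p\Phi''-\mean_p\,R_p\Phi'=R_p\bigl(-\Phi''-H\Phi'\bigr)=\lambda_1 R_p\Phi,$$
using the ODE in the last step. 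Thus $\lambda_1$ is an eigenvalue with radial eigenfunction $R_p\Phi$, completing the lemma.

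There is no real obstacle here beyond organizing the bookkeeping; the only point requiring a moment of care is the smoothness of $R_p\Phi$ across the cut locus $S_\pi^d(p)$, which is why invoking Lemma \ref{smoothcrit} (rather than only Proposition \ref{odeprop}) is essential.
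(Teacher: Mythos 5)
Your proposal is correct and follows essentially the same route as the paper: verify the initial conditions, substitute into the ODE using $H=\frac{(\alpha+\beta)\cos+\beta}{\sin}$, and invoke the uniqueness in Proposition \ref{odeprop}. The only difference is that you spell out the smoothness of $R_p\Phi$ via Lemma \ref{smoothcrit} and the eigenfunction property via Lemma \ref{lapofradlem}, whereas the paper dismisses these as obvious; your added care is welcome but not a different argument.
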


\begin{proof}
  The function $R_p\Phi$ is obviously smooth for every $p\in M$. 
  We have
  $$\Phi'=-\frac{\lambda_1}{\lambda_1+\beta}\sin\;\;\;\text{and}\;\;\; \Phi''=-\frac{\lambda_1}{\lambda_1+\beta}\cos.$$
  The initial conditions $\Phi(0)=1$ and $\Phi'(0)=0$ are satisfied.
  Furthermore 
  %
  \begin{align*}
    \Phi''+\frac{\hat\Omega'}{\hat\Omega}\Phi' +\lambda_1\Phi &= \frac{\lambda_1}{\lambda_1+\beta}(-\cos - (\alpha+\beta)\cos - \beta +\lambda_1\cos + \beta)\\
    &= 0.
  \end{align*}
  This implies the claim.
\end{proof}

\begin{lem}
  Set $\lambda_k:=k(k+\alpha+\beta)$ for $k\in\N$. Then $\lambda_k$ is an eigenvalue and 
  the solutions $\Phi_{\lambda_k}$ of the ODE is given by  
  $$\Phi_{\lambda_k}:\R\to\R,\;\; r\mapsto \sum_{i=0}^k a_i\cos^i r,$$
  with certain $a_i\in\R$.
  The spectrum of $M$ is $(\lambda_k)_{k\in\N_0}$.
\end{lem}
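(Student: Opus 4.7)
The plan is to convert the ODE into a Jacobi-type equation under the substitution $x=\cos r$, exhibit explicit polynomial solutions for $\lambda=\lambda_k$, verify that these yield smooth eigenfunctions of $\Delta$ on $M$, and finally use Lemma \ref{phipolylem} to show that no other eigenvalues occur.

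First I would substitute $x=\cos r$ and write $\Phi(r)=P(x)$. Using the mean curvature formula $H=\frac{(\alpha+\beta)\cos+\beta}{\sin}$ from the remark after Proposition \ref{formofden}, a direct computation transforms the ODE $\Phi''+H\Phi'+\lambda\Phi=0$ into
\[
(1-x^2)P''(x)-\bigl((\alpha+\beta+1)x+\beta\bigr)P'(x)+\lambda P(x)=0.
\]
This is a Jacobi-type equation. Looking at the coefficient of $x^k$ in this equation for a polynomial $P$ of exact degree $k$ with leading coefficient $a_k\neq 0$, one obtains the necessary relation $-k(k-1)-k(\alpha+\beta+1)+\lambda=0$, i.e.\ $\lambda=k(k+\alpha+\beta)=\lambda_k$. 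So any polynomial solution of exact degree $k$ forces $\lambda=\lambda_k$.

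For existence, I would make the ansatz $P(x)=\sum_{i=0}^k a_i x^i$, fix $a_k\neq 0$ and $a_{k+1}=0$, and read off the three-term recurrence
\[
(i+2)(i+1)a_{i+2}-\beta(i+1)a_{i+1}-(i-k)(i+k+\alpha+\beta)a_i=0
\]
from the coefficient of $x^i$. The factor in front of $a_i$ is nonzero for $i=0,\dots,k-1$ since $\alpha,\beta\geq 0$, so the recurrence unambiguously determines $a_{k-1},a_{k-2},\dots,a_0$ from $a_k$; normalising by $P(1)=\sum a_i=1$ fixes $a_k$ and yields a polynomial solution of exact degree $k$, which I call $\Phi_{\lambda_k}$. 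Periodicity/parity of $\cos$ makes $\Phi_{\lambda_k}\circ\gamma$ even about both $0$ and $\pi$, so all odd derivatives at these points vanish and Lemma \ref{smoothcrit} gives that $R_p\Phi_{\lambda_k}\in C^\infty(M)$. Lemma \ref{lapofradlem} then shows $\Delta R_p\Phi_{\lambda_k}=\lambda_k R_p\Phi_{\lambda_k}$ on $\hat B_\pi(p)$, and by continuity everywhere; uniqueness in Proposition \ref{odeprop} identifies this polynomial with the solution $\Phi_{\lambda_k}$ of the ODE.

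Finally, to see that the spectrum is exactly $(\lambda_k)_{k\in\N_0}$, let $\lambda>0$ be any eigenvalue and take the radial eigenfunction $\Phi_\lambda$ supplied by Proposition \ref{odeprop}. Lemma \ref{phipolylem} writes $\Phi_\lambda=P\circ\cos$ for a real polynomial $P$; if $P$ has exact degree $k\geq 1$, the leading-term computation above forces $\lambda=\lambda_k$. The eigenvalue $\lambda_0=0$ is realised by the constants, so the spectrum equals $\{\lambda_k : k\in\N_0\}$. The main technical hurdle is purely bookkeeping, namely verifying that the recurrence is consistent (in particular that $a_{k+1}=0$ is compatible with the equation at index $i=k$, which holds precisely because $\lambda=\lambda_k$), and that the denominators $(i-k)(i+k+\alpha+\beta)$ do not vanish for $0\leq i\leq k-1$; both are immediate from $\alpha,\beta\geq 0$ and $k\geq 1$.
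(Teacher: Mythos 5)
Your proposal is correct and follows essentially the same route as the paper: the substitution $x=\cos r$ turns the ODE into the same three-term recurrence the paper obtains by expanding directly in powers of $\cos r$ (your coefficient $-(i-k)(i+k+\alpha+\beta)$ is the paper's $k^2-i^2+(k-i)(\alpha+\beta)$), with the same nonvanishing check and the same normalisation. The only cosmetic difference is at the end, where you rule out other eigenvalues via the leading-coefficient identity applied to $P$ from Lemma \ref{phipolylem}, while the paper argues that the $\Phi_{\lambda_k}$ already span all polynomials in cosine; both are one-line consequences of the same lemma.
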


\begin{proof}
  Let $k\in\N$. The function $R_p\Phi_{\lambda_k}$ is obviously smooth for every $p\in M$. We have
  $$\Phi'_{\lambda_k}=-\sin\sum_{i=0}^k i a_i  \cos^{i-1},$$
  $$\Phi''_{\lambda_k}=-\sum_{i=0}^k i a_i \cos^i+(1-\cos^2)\sum_{i=0}^k i(i-1) a_i \cos^{i-2}
  =-\sum_{i=0}^k i^2 a_i \cos^i + \sum_{i=-2}^{k-2} (i+2)(i+1) a_{i+2} \cos^i$$
  and 
  $$\frac{\hat\Omega'}{\hat\Omega}\Phi'_{\lambda_k}=-((\alpha+\beta)\cos+\beta)\sum_{i=0}^k i a_i \cos^{i-1}
  =-(\alpha+\beta)\sum_{i=0}^k i a_i \cos^i -\beta\sum_{i=-1}^{k-1} (i+1) a_{i+1} \cos^i.$$
  Hence
  \begin{align*}
    0 &= \Phi''_{\lambda_k}+\frac{\hat\Omega'}{\hat\Omega}\Phi'_{\lambda_k} +\lambda_k\Phi_{\lambda_k} \\
    &= \sum_{i=0}^k ( (k^2-i^2+(k-i)(\alpha+\beta))a_i + (-\beta i -\beta)a_{i+1} + (i^2+3i+2)a_{i+2} )\cos^i
  \end{align*}
  where we set $a_{k+2}:=0=:a_{k+1}$. 
  Since $k^2-i^2+(k-i)(\alpha+\beta)\neq 0$ for $i\neq k$ we get a recursive formula for the $a_i$ 
  if we require $\Phi_{\lambda_k}(0)=1=\sum_{i=0}^ka_i$.
  Because $(\Phi_{\lambda_k})_{k\in\N}$ spans the space consisting of all polynomials in cosine, $(\lambda_k)_{k\in\N_0}$ is 
  the whole spectrum.
\end{proof}

\subsection{Two Variants of the Proof}

We keep the definitions of $\alpha,\beta,\lambda_1$ and $\Phi$ 
from the last section.\\

\textbf{First Variant.} \label{var1}
So far we have not used the embedding at all. In order to be allowed to use 
the second part of Corollary \ref{bessenicecor} we only consider the case where 
$M$ is not diffeomorphic to the sphere $S^n$ in this first variant of the proof.  


\begin{lem} \label{lastlem}
  All geodesics of $R^{\Phi}(M)$ are circles.
\end{lem}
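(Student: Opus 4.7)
The plan is to exploit Corollary \ref{bessenicecor}(2) together with the explicit formula for $\Phi=\Phi_{\lambda_1}$ from Lemma \ref{firsteigenlem} to compute the screw function of $c:=R^{\Phi}\circ\gamma$ for an arbitrary unit speed geodesic $\gamma$ of $M$, and then to recognise the result as the screw function of a unit circle in $\R^2$. Since Lemma \ref{congscrewlem} extends to curves in $\ell^2$ (as stated in the remark preceding its proof, and as the argument only uses spans of finitely many vectors, it carries over to $L^2(M)$ verbatim), this will force $c$ to be congruent to a unit circle in a two-dimensional affine subspace, hence to be itself a circle.

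The key computation runs as follows. By Corollary \ref{bessenicecor}(2) we have for all $s_0,s\in\R$
$$\skl{c(s_0)}{c(s)} = C_{\Phi}^2\,\Phi(d(\gamma(s_0),\gamma(s))).$$
Since geodesics of $M$ are closed of length $2\pi$ and $\Phi$ extends to the $2\pi$-periodic even function $r\mapsto \frac{\lambda_1}{\lambda_1+\beta}\left(\cos r + \frac{\beta}{\lambda_1}\right)$ on $\R$, this inner product reduces to $C_{\Phi}^2\Phi(s-s_0)$, so $\|c(s)\|^2 = C_{\Phi}^2$ and the screw function becomes
$$S_0(s) = 2C_{\Phi}^2 - 2\skl{c(0)}{c(s)} = \frac{2C_{\Phi}^2\lambda_1}{\lambda_1+\beta}\left(1-\cos s\right).$$
Combining $C_{\Phi}^2 = n/\lambda_1$ from Corollary \ref{bessenicecor}(2) with $\lambda_1 = n-\beta$ — which follows from $\lambda_1 = \alpha+\beta+1$ (Lemma \ref{firsteigenlem}) and $\alpha + 2\beta = n-1$ (remark after Proposition \ref{formofden}) — yields $C_{\Phi}^2\lambda_1/(\lambda_1+\beta) = 1$ and therefore $S_0(s) = 2(1-\cos s)$. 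This is precisely the screw function of the unit circle $s\mapsto(\cos s,\sin s)$ in $\R^2$.

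With this identification, Lemma \ref{congscrewlem} produces an isometry of $L^2(M)$ that carries $c$ onto this unit circle, so $c$ is a circle. Since every geodesic of the embedded submanifold $R^{\Phi}(M)$ is the image of some unit speed geodesic of $M$ — we are working in the second case of Corollary \ref{bessenicecor}, where $R^{\Phi}$ is an isometric embedding — the lemma follows. The main obstacle is essentially bookkeeping: one has to collect the three identities $\lambda_1 = \alpha+\beta+1$, $\alpha+2\beta = n-1$ and $\lambda_1 = n/C_{\Phi}^2$ scattered over three different places and check that the constants collapse cleanly so that the screw function equals exactly $2(1-\cos s)$, rather than merely a positive multiple thereof.
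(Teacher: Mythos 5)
Your proof is correct and follows essentially the same route as the paper: compute the screw function of $c=R^{\Phi}\circ\gamma$ from the inner-product formula of Corollary \ref{bessenicecor}(2) and the explicit $\Phi_{\lambda_1}$, obtain $S_0(s)=2C_{\Phi}^2\tfrac{\lambda_1}{\lambda_1+\beta}(1-\cos s)$, and invoke Lemma \ref{congscrewlem} to conclude congruence with a circle. The only cosmetic difference is that you normalise the constant to $1$ inside the proof, whereas the paper works with a circle of radius $\sqrt{\lambda_1/(\lambda_1+\beta)}\,C_{\Phi}$ and defers the radius-$1$ computation to the subsequent remark.
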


\begin{proof}
  For a unit speed geodesic $c$ in $R^{\Phi}(M)$ we have 
  $$\sklr{c(0)}{c(s)} = C^2_{\Phi} \frac{\lambda_1}{\lambda_1+\beta}\left(\cos s + \frac{\beta}{\lambda_1}\right)$$
  for all $s\in\R$ by the second part of Corollary \ref{bessenicecor}. 
  The screw function $S_0$ of $c$ is therefore
  $$S_0(s)=2C^2_{\Phi}-2C^2_{\Phi}\frac{\lambda_1}{\lambda_1+\beta}\left(\cos s + \frac{\beta}{\lambda_1}\right)
  =2\frac{\lambda_1}{\lambda_1+\beta}C^2_{\Phi}-2\frac{\lambda_1}{\lambda_1+\beta}C^2_{\Phi}\cos s.$$
  Because a circle of radius $\sqrt{\frac{\lambda_1}{\lambda_1+\beta}}C_{\Phi}$ has got 
  the same screw function, $c$ is a circle.
\end{proof}

\begin{rem}
  Taking the proof of Corollary \ref{bessenicecor} and the remark after Proposition \ref{formofden} into account we get that
  $C_{\Phi}^2=\frac{n}{\lambda_1}$ and $\lambda_1+\beta =n$ respectively. Hence the circles are of radius $1$.
\end{rem}

\begin{lem} \label{circlesymlem}
  Let $\overline M$ be the $n$-dimensional submanifold $R^{\Phi}(M)$ of $V^{\lambda_1}$. 
  Then $\overline M$ is a ROSS.
\end{lem}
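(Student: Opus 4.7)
The plan is to use the circle structure from Lemma~\ref{lastlem} (with radius $1$, by the remark after it) in order to realise each geodesic symmetry of $\overline M$ as the restriction of an affine isometry of the ambient Euclidean space $V^{\lambda_1}$. For each $p\in\overline M$ I would define $\tilde s_p\colon V^{\lambda_1}\to V^{\lambda_1}$ as the affine isometry fixing $p$, acting as $-\id$ on the translate of $T_p\overline M$ through $p$, and as $+\id$ on the orthogonal complement $N_p\overline M$. If I can show $\tilde s_p(\overline M)=\overline M$, then $s_p:=\tilde s_p|_{\overline M}$ is an isometry of $\overline M$ with $s_p(p)=p$ and $(ds_p)_p=-\id$, i.e.\ a global geodesic symmetry at $p$, and $\overline M$ is a Riemannian symmetric space.

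To verify the invariance, I would fix a unit vector $v\in T_p\overline M$ and the unit-speed geodesic $c:=R^\Phi\circ\gamma_v$ of $\overline M$ through $p$ with $c'(0)=v$. From the inner-product formula $\sklr{c(0)}{c(s)}=C_\Phi^2\,\Phi(s)$ of Corollary~\ref{bessenicecor} together with the explicit form of $\Phi=\Phi_{\lambda_1}$ from Lemma~\ref{firsteigenlem}, a short computation in the affine $2$-plane of the circle gives a parametrisation $c(s)=O+\cos s\,u+\sin s\,v$, where $O$ is the centre and $u=p-O$ is a unit vector. Since $c$ is a geodesic of the submanifold $\overline M$, its ambient acceleration $c''(0)=-u$ is normal to $\overline M$, so $u\in N_p\overline M$ while $v\in T_p\overline M$. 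Applying $\tilde s_p$ then yields
$$\tilde s_p(c(s))=p+(\cos s-1)u-\sin s\,v=O+\cos s\,u-\sin s\,v=c(-s),$$
so each geodesic through $p$ is mapped onto itself with reversed orientation. Because $\overline M$ is complete and connected, every point lies on such a geodesic, and therefore $\tilde s_p(\overline M)=\overline M$.

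Once every point of $\overline M$ carries a geodesic symmetry, $\overline M$ is a Riemannian symmetric space. It is compact as a continuous image of $M$, and in the case treated here $R^\Phi\colon M\to\overline M$ is a diffeomorphism by the second part of Corollary~\ref{bessenicecor}, so $\overline M$ is simply connected. All geodesics of $\overline M$ are closed of the same length $2\pi$, which forces $\overline M$ to have rank~$1$ among compact symmetric spaces. The classification of compact simply connected Riemannian symmetric spaces of rank~$1$ then identifies $\overline M$ (and hence $M$) with one of $S^n$, $\C\text{\emph P}^{n/2}$, $\Ham\text{\emph P}^{n/4}$ or $\Oct\text{\emph P}^2$, up to scaling.

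The main obstacle I expect is the identification $u\in N_p\overline M$, which is what reconciles the extrinsic description of the circle (splitting off the centre direction) with the intrinsic splitting $V^{\lambda_1}=T_p\overline M\oplus N_p\overline M$ at $p$. This is precisely the place where the circle property — rather than merely the screw line property of Lemma~\ref{congscrewlem} — is essential, since only for a circle does the acceleration point towards a single well-defined centre. A secondary subtlety is the deduction of rank~$1$ from ``all geodesics closed of the same length'', which is a standard fact for compact symmetric spaces and is consistent with the Bott--Samelson theorem cited earlier in the paper.
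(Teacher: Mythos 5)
Your proposal is correct and follows essentially the same route as the paper: you realise the geodesic symmetry at $p$ as the ambient reflection across the affine normal space $T^\perp_p\overline M$, use the circle parametrisation (with the centre direction $u=-c''(0)$ lying in the normal space because $c$ is a submanifold geodesic) to see that each geodesic through $p$ is reversed, and deduce rank $1$ from the closedness of all geodesics. The only difference is that you spell out the computation $\tilde s_p(c(s))=c(-s)$ which the paper compresses into ``a circle is determined by $c(0),c'(0),c''(0)$''; the extra remarks on simple connectedness and the classification go beyond what the lemma asserts but do no harm.
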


\begin{proof}
  Fix a point $p\in \overline M$. Denote by $T^\perp_p\overline M$ the normal space of $\overline M$ in $p$. Let 
  $s_p:V^{\lambda_1}\to V^{\lambda_1}$ be the reflection at the affine subspace
  $T^\perp_p\overline M$. 
  For a geodesic $c:\R\to \overline M$ of $\overline M$ with $c(0)=p$ we have
  $s_p(c(0))=p$, $s_p(c'(0))=-c'(0)$ and 
  $s_p(c''(0))=c''(0)$. Since a circle is determined by 
  this data, we have $s_p(c(\R))=c(\R)$. In particular, 
  it holds $s_p(\overline M)=\overline M$.
  Since $s_p$ is an isometry of $V^{\lambda_1}$, it is one of $\overline M$.
  This shows that $\overline M$ is a Riemannian symmetric space.
  If it were not of rank $1$, it would have non-closed geodesics in
  maximal flats.
\end{proof}

\textbf{Second Variant.} \label{var2}
The second variant is an intrinsic proof, which uses \cite[Theorem 1]{ranjan_first}.
Since the averaged eigenfunction $\Phi$ has got no saddle point, we only have to
check that equality holds in Ros's estimate for the first eigenvalue, cf. \cite[Theorem 4.2]{ros_first}.
Equality holds because of
$$\lambda_1 = \alpha+\beta+1= n-1-2\beta+\beta+1 = n-\beta$$
and 
$$\frac{1}{3}(2\ric + n+2) = \frac{1}{3}(2\alpha + \beta + n+2)= \frac{1}{3}(2n-2 -4\beta + \beta + n+2) = n-\beta.$$

\newpage

\appendix

\resettheoremcounters

\section{Appendix}

All the auxiliary results are collected here.

\begin{lem} \label{spanlem}
  Let $F:\R\to\R$ be smooth. The following statements are equivalent.
  \begin{enumerate}
    \item The vector space 
      $$V:=\spn{F(\cdot-t)\;|\;t\in\R}\subset C^\infty(\R)$$ 
      is of finite dimension.
    \item The function $F$ solves a linear ODE with constant coefficients.
    \item There are $k\in\N$, $\alpha_i,\beta_i\in\R$ and polynomials $P_i,Q_i:\R\to\R$ with real coefficients such that
      $$\forall\; x\in\R: \;\; F(x) = \sum_{i=1}^k (P_i(x)\sin \beta_i x + Q_i(x)\cos \beta_i x)e^{\alpha_i x}.$$
  \end{enumerate}
\end{lem}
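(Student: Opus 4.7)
My plan is to prove the cycle $(1) \Rightarrow (2) \Rightarrow (3)$ and $(3) \Rightarrow (2) \Rightarrow (1)$, since the equivalence $(2) \Leftrightarrow (3)$ is just the classical structure theorem for solutions of constant-coefficient linear ODEs, and the content really lies in connecting $(1)$ with the ODE condition.

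For $(1) \Rightarrow (2)$, the key observation is that $V$ is invariant under all translations: if $T_s$ denotes translation by $s$, i.e.~$(T_s f)(x) := f(x - s)$, then $T_s F(\cdot - t) = F(\cdot - t - s) \in V$, so $T_s$ maps $V$ into $V$. Since $V$ is finite-dimensional of some dimension $N$, I can pick a basis $f_1, \ldots, f_N$ and points $x_1, \ldots, x_N$ at which the evaluation matrix $(f_j(x_i))$ is invertible. For each $h \neq 0$, the difference quotient $\frac{F - T_h F}{h}$ lies in $V$, so it can be written as $\sum_j c_j(h) f_j$, and solving for the coefficients via evaluation at the $x_i$ shows that $c_j(h)$ converges as $h \to 0$ (since $\tfrac{F(x_i) - F(x_i-h)}{h} \to F'(x_i)$). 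Hence $F' \in V$. Iterating, $F^{(k)} \in V$ for all $k \geq 0$, so $F, F', \ldots, F^{(N)}$ are linearly dependent, producing a linear ODE with constant coefficients satisfied by $F$.

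For $(2) \Rightarrow (1)$, if $F$ satisfies a constant-coefficient linear ODE of order $n$, then every translate $F(\cdot - t)$ satisfies the same ODE (constant coefficients commute with translation), so $V$ is contained in the $n$-dimensional solution space. For $(2) \Leftrightarrow (3)$, this is the standard theorem: the solution space of $P(d/dx) y = 0$ is spanned by functions of the form $x^j e^{\alpha x} \cos(\beta x)$ and $x^j e^{\alpha x} \sin(\beta x)$, where $\alpha \pm \iu \beta$ runs through the complex roots of the characteristic polynomial $P$ and $j$ runs up to one less than the multiplicity. Conversely, any $F$ of the form in $(3)$ is annihilated by a sufficiently high power of $\prod_i ((d/dx)^2 - 2\alpha_i (d/dx) + \alpha_i^2 + \beta_i^2)$.

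The only step requiring real care is proving $F' \in V$ in the implication $(1) \Rightarrow (2)$; the rest is formal. This difficulty is resolved by exploiting that finite-dimensional subspaces of $C^\infty(\R)$ are closed under pointwise limits when the limit exists at sufficiently many separating points, which lets the pointwise convergence of the difference quotient transfer to convergence in $V$.
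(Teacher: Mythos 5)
Your proposal is correct, but the key implication $(1)\Rightarrow(2)$ is proved by a genuinely different mechanism than in the paper. The paper restricts the translation group to $V$, observes that $(B_t)_{t\in\R}$ with $B_tG:=G(\cdot-t)$ is a smooth one-parameter subgroup of $\End(V)$, writes $B_t=\exp(tB)$ for some $B\in\End(V)$, and reads off $F'=BF\in V$ by differentiating at $t=0$; you instead establish $F'\in V$ directly by writing the difference quotients $\frac{F-T_hF}{h}\in V$ in a fixed basis $f_1,\dots,f_N$, inverting an evaluation matrix $(f_j(x_i))$ at separating points to see that the coefficients converge as $h\to 0$, and concluding that the pointwise limit $F'$ stays in $V$. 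The two routes buy different things: the paper's Lie-group argument is shorter on the page but silently assumes the smoothness of $t\mapsto B_t$ as a curve in $\End(V)$ --- a fact whose verification requires essentially your evaluation-matrix device anyway --- whereas your argument is more elementary and self-contained, at the cost of having to note (as you implicitly do when iterating to get $F^{(k)}\in V$) that the translates $F'(\cdot-t)$ again lie in $V$, so that the same reasoning applies to $F'$. Both proofs then handle $(2)\Rightarrow(1)$ identically via translation invariance of the solution space, and both delegate $(2)\Leftrightarrow(3)$ to the standard structure theory of constant-coefficient linear ODEs. One small point worth making explicit in your write-up is the existence of points $x_1,\dots,x_N$ with $(f_j(x_i))$ invertible; this is a standard induction on $N$ using the linear independence of the $f_j$, but it is the only non-formal ingredient of your limit argument and deserves a sentence.
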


\begin{proof}
  $\emph{1.} \Rightarrow \emph{2.:}$
  For every $t\in\R$ the map 
  $$B_t:V\to V,\;\; G\mapsto B_tG:=G(\cdot -t)$$
  is an endomorphism of $V$. Furthermore $(B_t)_{t\in\R}$ is a smooth one-parameter
  subgroup of $\operatorname{End}(V)$. So there is $B\in\operatorname{End}(V)$ with
  $$B_t=\exp (tB).$$
  We have for all $x\in\R$
  \begin{align*}
    F'(x) &= \partial_x\left(\left(B_0F\right)(x)\right) 
    = \partial_x\left(\left(B_xF\right)(0)\right) 
    = \partial_x\left(\left(\exp\left(xB\right) F\right)(0)\right) \\
    &= \left(B\left(\exp\left(xB\right) F\right)\right)\left(0\right) 
    = \left(B\left(B_xF\right)\right)\left(0\right) 
    = \left(B\left(B_0F\right)\right)\left(x\right) \\
    &= \left(BF\right)\left(x\right).
  \end{align*}
  This means that $F'$ is again in $V$. Because of $\dim V<\infty$
  the functions $F,F',\dots,F^{(\dim V)}$ are linearly dependent. Hence $F$ solves 
  a linear ODE with constant coefficients.

  $\emph{2.} \Rightarrow \emph{1.:}$
  The function $F$ solves a linear ODE with constant coefficients. 
  For every $t\in \R$ this ODE is solved by $F(\cdot-t)$ as well. Since the space of solutions 
  is finite-dimensional so is $\spn{F(\cdot-t)\;|\;t\in\R}$.

  $\emph{2.} \Leftrightarrow \emph{3.:}$
  This follows from standard linear ODE theory.
\end{proof}

%

\begin{lem} \label{specialspanlem}
  Let $F:\R\to\R$ be smooth, $2\pi$-periodic and even. Assume that the vector space 
  $\spn{F(\cdot-t)\;|\;t\in\R}$ is of finite dimension. Then there are 
  $k\in\N$, $Q_i\in\R$ and $\beta_i\in\N$ such that
  $$\forall\; x\in\R:\;\;F(x) = \sum_{i=1}^k Q_i\cos \beta_i x.$$
\end{lem}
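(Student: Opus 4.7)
The plan is to apply Lemma \ref{spanlem} to get a concrete normal form for $F$, and then to let $2\pi$-periodicity, evenness and real-valuedness strip this form down to a pure cosine series.

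First, by Lemma \ref{spanlem} the finite-dimensionality of $\spn{F(\cdot-t)\;|\;t\in\R}$ yields a representation
$$F(x)=\sum_{j=1}^m p_j(x)\,e^{\mu_j x}$$
(the complex analogue of part (3.), obtained by grouping conjugate pairs) with distinct $\mu_j\in\C$ and nonzero polynomials $p_j$ with complex coefficients.

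Next, I would use $2\pi$-periodicity to cut this expression down to pure Fourier modes. Substituting $x\mapsto x+2\pi$ into $F(x+2\pi)=F(x)$ and invoking the standard linear independence of the family $\{x^\ell e^{\mu x}\;|\;\ell\ge 0,\,\mu\in\C\}$ in $C^\infty(\R)$ -- which follows from iteratively applying the commuting operators $(D-\mu_j\,\mathrm{id})^{\deg p_j+1}$ to peel off one summand at a time -- I obtain that each summand is individually $2\pi$-periodic, i.e.\ $p_j(x+2\pi)\,e^{2\pi\mu_j}=p_j(x)$. Matching leading coefficients of these two polynomials forces $e^{2\pi\mu_j}=1$, hence $\mu_j=in_j$ with $n_j\in\Z$; then $p_j$ is a $2\pi$-periodic polynomial and therefore constant. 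Thus $F$ is a finite Fourier series
$$F(x)=\sum_{j=1}^m c_j\,e^{in_j x},\qquad c_j\in\C\setminus\{0\},\; n_j\in\Z.$$

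Finally, by the uniqueness of coefficients in such a finite Fourier expansion, the condition $F(-x)=F(x)$ gives $c_{j'}=c_j$ whenever $n_{j'}=-n_j$, and $\overline F=F$ gives $\overline{c_{j'}}=c_j$ under the same pairing, so each $c_j$ is real. Grouping opposite-frequency pairs (and keeping any zero-frequency term) yields the required representation $F(x)=\sum_i Q_i\cos(\beta_i x)$ with $Q_i\in\R$ and $\beta_i\in\N$. The only real, but rather minor, obstacle is the clean justification of the linear independence of the functions $\{x^\ell e^{\mu x}\}$; this is a classical fact that I would simply invoke. The remainder amounts to matching coefficients and regrouping terms, so I do not expect any deeper difficulty.
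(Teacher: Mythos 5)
Your proof is correct and rests on the same pillar as the paper's, namely Lemma \ref{spanlem}, but the execution differs in a way worth recording. The paper passes directly to the real normal form of part (3.)~of Lemma \ref{spanlem}, asserts without detail that $2\pi$-periodicity and evenness already reduce it to $\sum_i Q_i\cos\beta_i x$ with \emph{real} $\beta_i$, and then devotes the entire written argument to showing $\beta_i\in\Z$ by expanding $F(x-2\pi)-F(x+2\pi)$ and $F(x-2\pi)-F(x)$ and using the linear independence of the functions $\cos\beta_i x$, $\sin\beta_i x$. You instead stay with the complex exponential form $\sum_j p_j(x)e^{\mu_j x}$: periodicity together with the linear independence of the family $\{x^\ell e^{\mu x}\}$ forces each summand to be $2\pi$-periodic, matching leading coefficients gives $e^{2\pi\mu_j}=1$ at once, and so integrality of the frequencies and constancy of the $p_j$ drop out in a single stroke, with evenness and realness needed only at the end to pair the modes into cosines. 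Your route thus supplies exactly the reduction step the paper leaves implicit and renders its trigonometric second pass unnecessary; the one (shared) blemish is that a nonzero constant term corresponds to frequency $0$, so the conclusion should strictly read $\beta_i\in\N_0$.
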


\begin{proof}
  By Lemma \ref{spanlem} and the fact that $F$ is $2\pi$-periodic and even
  we get $k\in\N$, $Q_i\in\R$ and $\beta_i\in\R$ with the desired property.
  We only need to show that $\beta_i\in\N$. We may assume that the 
  $\beta_i$ are distinct and that $Q_i\neq 0$. Fix an $x\in\R$.
  Then $\cos\beta_1 x,\dots,\cos\beta_k x$ and $\sin\beta_1 x,\dots,\sin\beta_k x$ are linearly independent.
  Because of the $2\pi$-periodicity of $F$ we get
  \begin{align*}
    0 &= F(x-2\pi) - F(x+2\pi) \\
    &= \sum_{i=1}^k Q_i(\cos\beta_i x \cos 2\pi\beta_i + \sin\beta_i x \sin 2\pi\beta_i)
     -\sum_{i=1}^k Q_i(\cos\beta_i x \cos 2\pi\beta_i - \sin\beta_i x \sin 2\pi\beta_i)\\
    &= \sum_{i=1}^k 2Q_i\sin\beta_i x \sin 2\pi\beta_i.
  \end{align*}
  This yields $\sin 2\pi\beta_i=0$. Hence we get
  $$0 = F(x-2\pi) - F(x) = \sum_{i=1}^k Q_i\cos\beta_i x \cos 2\pi\beta_i -\sum_{i=1}^k Q_i\cos\beta_i x = \sum_{i=1}^k Q_i\cos\beta_i x (\cos 2\pi\beta_i-1).$$
  This yields $\cos 2\pi\beta_i=1$ and hence the claim.
\end{proof}

%

\begin{lem} \label{chebpoly}
  For every $m\in\N$ there are $a_{m,1},\dots,a_{m,m}\in\R$ such that
  $$\forall\; x\in\R:\;\; \cos mx = \sum_{k=1}^m a_{m,k}\cos^kx.$$
\end{lem}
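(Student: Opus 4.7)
The plan is to proceed by induction on $m$, using the product-to-sum identity
$$2\cos x\cos mx=\cos((m+1)x)+\cos((m-1)x).$$
For the base case $m=1$ I take $a_{1,1}=1$, which trivially satisfies $\cos x=a_{1,1}\cos x$. For the inductive step, suppose that for every $j\leq m$ I have an expression of $\cos jx$ as a polynomial in $\cos x$ of degree at most $j$. Rearranging the identity gives
$$\cos((m+1)x)=2\cos x\cdot\cos mx-\cos((m-1)x),$$
and the inductive hypothesis applied to the right-hand side yields a polynomial in $\cos x$ of degree at most $m+1$. Reading off the coefficients of the powers $\cos^k x$ with $1\le k\le m+1$ produces the desired constants $a_{m+1,k}$.

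The main obstacle I foresee is the appearance of a constant term. A direct computation gives $\cos 2x=2\cos^2 x-1$, so the polynomial expression naturally includes an additive constant $-1$ which is not captured by a sum beginning at $k=1$. Testing the claim as written for $m=2$ at $x=\pi/2$ forces $0=-1$, showing that the statement cannot hold literally for even $m\geq 2$; for odd $m$, however, the standard Chebyshev polynomials $T_m$ contain only odd powers of $\cos x$, so the statement as worded is correct in that case with many $a_{m,k}$ vanishing.

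The natural fix, and the form genuinely used in Lemmata \ref{phipolylem} and \ref{denpolylem}, is to allow the index $k=0$ in the sum, i.e.\ to assert that $\cos mx$ is a polynomial of degree $m$ in $\cos x$. Under that reading the induction above goes through without modification: the product $\cos x\cdot \sum_{k=0}^m a_{m,k}\cos^k x$ contributes powers $\cos^1 x,\dots,\cos^{m+1} x$, and subtracting the polynomial $\sum_{k=0}^{m-1}a_{m-1,k}\cos^k x$ yields a polynomial in $\cos x$ of degree exactly $m+1$ of the required form. This suffices for the applications in the main text, since there one only needs that $\Phi$ and $\hat\Omega^2$ lie in $\mathbb{R}[\cos]$, where a constant term is harmless and indeed expected (for example $\Phi(0)=1$ forces a nonzero evaluation at $x=0$).
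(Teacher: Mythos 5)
Your induction is exactly the paper's: the base case $a_{1,1}=1$ and the recurrence $\cos((m+1)x)=2\cos x\cos mx-\cos((m-1)x)$. But your side observation is not a quibble — it is a genuine correction. The statement as printed, with the sum starting at $k=1$, is false for every even $m\ge 2$: as you note, $\cos 2x=2\cos^2x-1$ evaluated at $x=\pi/2$ forces $-1=0$. The paper's own inductive step silently commits the same error at $m=1$, where it replaces $\cos((m-1)x)=\cos 0=1$ by the empty sum $\sum_{k=1}^{0}a_{0,k}\cos^kx=0$; the dropped constant then propagates to all even indices (the Chebyshev polynomials $T_m$ have constant term $(-1)^{m/2}$ for even $m$). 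Your fix — start the sum at $k=0$, i.e.\ assert only that $\cos mx$ is a polynomial of degree $m$ in $\cos x$ — is the right one, your induction then closes cleanly, and it is all that Lemmata \ref{phipolylem} and \ref{denpolylem} actually use, since there one only needs $\Phi,\hat\Omega^2\in\R[\cos]$ and a constant term is harmless (indeed $\Phi(0)=1$ demands one).
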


\begin{proof}
  We can prove the claim by induction on $m$. For $m=1$ we have
  $a_{1,1}=1$. 
  If the claim is true for $1,\dots,m$ then 
  because of
  \begin{align*}
    \cos (m+1)x+\cos (m-1)x&= \cos mx \cos x -\sin mx\sin x + \cos mx\cos x +\sin mx \sin x \\
    &= 2\cos mx \cos x
  \end{align*}
  we have for all $x\in \R$
  \begin{align*}
    \cos (m+1)x = {}& -\cos (m-1)x + 2\cos mx \cos x \\ 
    = {}& -\sum_{k=1}^{m-1} a_{m-1,k}\cos^kx + 2\cos x\sum_{k=1}^m a_{m,k}\cos^kx \\ 
    = {}& -\sum_{k=1}^{m-1} a_{m-1,k}\cos^kx + 2\sum_{k=1}^m a_{m,k}\cos^{k+1}x \\ 
    = {}& -a_{m-1,1}\cos x+\sum_{k=2}^{m-1} (2a_{m,k-1}-a_{m-1,k})\cos^kx \\
    & + 2 a_{m,m-1}\cos^{m}x + 2 a_{m,m}\cos^{m+1}x.
  \end{align*}
\end{proof}

\begin{lem}[Gau{\ss}-Lucas' Theorem, {\cite[Theorem 2.1.1]{schmeisser_lucas}}] \label{lucasthm}
  If $P:\C\to\C$ is a non-constant polynomial with complex coefficients, 
  all roots of $P'$ belong to the convex hull of the set of roots of $P$.
\end{lem}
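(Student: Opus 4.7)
The plan is to reduce the statement to a calculation with the logarithmic derivative $P'/P$, which converts the product structure of $P$ into a sum that is easy to analyze geometrically. Writing $P(z) = a\prod_{i=1}^n (z - z_i)$ where $z_1,\dots,z_n$ are the roots of $P$ listed with multiplicity, one has
$$\frac{P'(z)}{P(z)} = \sum_{i=1}^n \frac{1}{z - z_i}$$
whenever $P(z) \neq 0$. So the first step is to fix a root $w$ of $P'$ and split into two cases: either $w \in \{z_1,\dots,z_n\}$, in which case $w$ is trivially in the convex hull, or $P(w) \neq 0$ and $\sum_{i=1}^n (w - z_i)^{-1} = 0$.

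In the second case the key trick is to complex-conjugate and rationalize: since $\overline{(w-z_i)^{-1}} = (w - z_i)/|w - z_i|^2$, the equation becomes
$$\sum_{i=1}^n \frac{w - z_i}{|w - z_i|^2} = 0.$$
Setting $\lambda_i := |w - z_i|^{-2} > 0$ and solving for $w$ yields
$$w = \frac{\sum_i \lambda_i z_i}{\sum_i \lambda_i},$$
which exhibits $w$ explicitly as a convex combination of $z_1,\dots,z_n$. This completes the argument.

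The proof is short and essentially routine once one writes down $P'/P$; there is no real obstacle. The only subtle point worth highlighting is that multiple roots of $P$ must be listed with their multiplicities (so that the coefficients in the partial-fraction expansion of $P'/P$ are all $+1$), because otherwise the positivity of the weights $\lambda_i$ could fail. Also, one should handle the boundary case $P(w) = 0$ separately before dividing by $P(w)$, to avoid any false apparent division-by-zero issue.
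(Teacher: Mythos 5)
Your proof is correct and follows essentially the same route as the paper: both write $P'/P$ as $\sum_i (w-\zeta_i)^{-1}$, conjugate and rationalize to get positive weights $|w-\zeta_i|^{-2}$, solve for $w$ as a convex combination of the roots, and dispose separately of the trivial case where $w$ is itself a root of $P$. Nothing further is needed.
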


\begin{proof}
  Set $m:=\deg P$ and let $\zeta_1,\dots,\zeta_m\in\C$ be the (not necessarily distinct)
  roots of $P$.
  We can write 
  $$\forall\; z\in \C : \;\; P(z)= A\prod_{i=1}^m (z-\zeta_i)$$
  where $A\in \C$ is the leading coefficient of $P$.
  First fix a $w\in\C$ with $P'(w)=0$ and $P(w)\neq 0$. We have
  $$0=\frac{P^\prime(w)}{P(w)}= \sum_{i=1}^m \frac{1}{w-\zeta_i}
  =\sum_{i=1}^m \frac{\overline{w}-\overline{\zeta_i} } {\vert w-\zeta_i\vert^2}.$$
  This implies
  $$\left(\sum_{i=1}^m \frac{1}{\vert w-\zeta_i\vert^2}\right)\overline{w}
  =\sum_{i=1}^m \frac{1}{\vert w-\zeta_i\vert^2}\overline{\zeta_i}$$
  and by taking conjugates
  $$w=\frac{1}{\left(\sum_{i=1}^m \frac{1}{\vert w-\zeta_i\vert^2}\right)}\sum_{i=1}^m \frac{1}{\vert w-\zeta_i\vert^2}\zeta_i.$$
  Hence we get $w\in\conv{\zeta_1,\dots,\zeta_m}$. 
  Now assume that $P'(\zeta_j)=0$ for some $1\le j\le m$. 
  Since $\zeta_j\in\conv{\zeta_1,\dots,\zeta_m}$ we are done. 
\end{proof}

\newpage


\addcontentsline{toc}{section}{References}

\bibliographystyle{amsalpha}
\bibliography{lit.diplarb}

\end{document}